\newcommand{\C}{\mathbb{C}}
\newcommand{\Z}{\mathbb{Z}}
\newcommand{\N}{\mathbb{Z}^{\ge 0}}
\newcommand{\I}{\mathcal{I}}
\newcommand{\quotientalgebra}{\mathcal{A}}
\newcommand{\quotientbasis}{\mathcal{B}}
\newcommand{\ringbasis}{\mathbf{B}}
\newcommand{\pring}{\mathscr{R}}
\newcommand{\x}{\mathbf{x}}
\newcommand{\z}{\mathbf{z}}
\newcommand{\numroots}{r}
\DeclareMathOperator{\rank}{rank}
\renewcommand{\root}{\mathbf{r}}
\newcommand{\norm}[1]{\left\lVert#1\right\rVert}
\newcommand{\0}{\mathbf{0}}
\newcommand{\pd}{\beta} 
\newcommand{\dMac}{d}
\newcommand{\degree}{\beta}
\newcommand{\dimension}{n}
\DeclareMathOperator{\Mac}{\textup{Mac}}
\DeclareMathOperator{\QR}{\textup{QR}}
\DeclareMathOperator{\SVD}{\textup{SVD}}
\DeclareMathOperator{\MatMult}{MM}
\DeclareMathOperator{\Backsolve}{Back}
\newcommand{\monomialsdeg}[1]{H_{#1}}
\newcommand{\monomialsleqdeg}[1]{V_{#1}}
\newcommand{\polysdeg}[1]{T_{#1}}
\newcommand{\polysinmac}[1]{S_{#1}}
\newcommand{\nullitymac}[1]{\textup{nullity}(\Mac(#1))}
\newcommand{\nullitymacappendix}[1]{\gamma_{#1}}
\newcommand{\nullSpace}[1]{N_{#1}}
\newcommand{\DBDStep}[1]{G_{#1}} 
\newcommand{\degreelim}{\lim_{\degree\rightarrow\infty}}
\newcommand{\degreelimsup}{\limsup_{\degree\rightarrow\infty}}
\newcommand{\degreeliminf}{\liminf_{\degree\rightarrow\infty}}
\newcommand{\dimensionlim}{\lim_{\dimension\rightarrow\infty}}
\newcommand{\floor}[1]{\left\lfloor #1 \right\rfloor}
\newcommand{\ceil}[1]{\left\lceil #1 \right\rceil}
\newcommand{\halfFloor}[1]{\floor{\frac{#1}{2}}}
\newcommand{\halfCeil}[1]{\ceil{\frac{#1}{2}}}
\renewcommand{\mathcal}{\mathscr}
\newcommand{\hermitian}{^H}
\newcommand{\transpose}{^\top}
\DeclareMathOperator{\nullity}{\textup{nullity}}
\newcommand{\lowdegring}{\mathbb C[x_1,\ldots,x_n;\dMac-1]}
\newtheorem{theorem}{Theorem}[section]
\newtheorem{proposition}[theorem]{Proposition} 
\newtheorem{lemma}[theorem]{Lemma} 
\newtheorem{definition}[theorem]{Definition} 
\title[Normal-Form Rootfinding]{Analysis of Normal-Form Algorithms for Solving Systems of Polynomial Equations}
\author{Suzanna Parkinson}
\thanks{Partially supported by a Mentoring Environment Grant, Brigham Young University.}
\author{Hayden Ringer}
\author{Kate Wall}
\author{Erik Parkinson}
\author{Lukas Erekson}
\author{Daniel Christensen}
\author{Tyler J. Jarvis}
\thanks{Partially supported by NSF grant DMS-1564502.}
\begin{document}
\maketitle
\markleft{Parkinson et al.}
\begin{abstract}

We examine several of the normal-form multivariate polynomial rootfinding methods of Telen, Mourrain, and Van Barel and some variants of those methods. We analyze the performance of these variants  in terms of their asymptotic temporal complexity as well as speed and accuracy on a wide range of numerical experiments.  
All variants of the algorithm are problematic for systems in which many roots are very close together. We analyze performance on one such system in detail, namely the ``devastating example'' that Noferini and Townsend used to demonstrate instability of resultant-based methods.
\end{abstract}

\section{Introduction}\label{sec:assumptions}

We are interested in efficient numerical algorithms to solve generic systems of multivariate polynomials $\{p_1,\dots, p_n\}$. That is, we wish to find the set $Z(p_1,\dots, p_n) = \{\x :\,  p_i(\x) = \0,  1 \le i \le n\}$. By the term \emph{generic} we mean that the system has only a finite number of roots, no multiple roots, and no roots at infinity; that is, 
$\I = ( p_1,\ldots,p_n )$ is a radical, zero-dimensional ideal with no zeros at infinity.

One powerful way to way to solve these systems is with eigenvalue-based methods, which are  multidimensional generalizations of companion-matrix methods.  An essential step in these methods is finding a basis for the quotient algebra $\quotientalgebra = \C[x_1,\dots,x_n]/\I$.  Telen, Mourrain, and Van Barel in \cite{Telen,Telen2,Telen4} developed several algorithms to numerically construct a basis for this quotient algebra. 

In this article we analyze several variations of their methods, including
using different matrix decompositions at key steps, and also consider some proposed speedups from \cite{Telen4}. We examine the temporal complexity and summarize the timing and accuracy (residuals) from a number of numerical experiments for each variation. 

Unfortunately, these algorithms are unstable and can perform poorly when many roots are close to each other. To examine these problem cases, we look at a system from Noferini and Townsend \cite{Noferini} that Townsend calls the \emph{devastating example}, and we discuss the inherent conditioning problems of these eigenvalue methods on such systems. Despite these issues, these algorithms perform well on systems of polynomials for which the roots are sufficiently separated from each other.

\subsection{Outline}
The basic structure of this paper is as follows. In the next section, we introduce eigenvalue methods for rootfinding and the Macaulay matrix. Sections~\ref{sec: Direct Macaulay Reduction} and~\ref{sec: Null Space Macaulay Reduciton} describe two different ways of using the Macaulay matrix to construct a basis for $\quotientalgebra$ in order to find roots. In Section~\ref{sec: Speedups}, we describe potential speedups to the previous methods. We describe the temporal complexity of each algorithm (both methods with and without the speedups) in Section~\ref{sec: temporal complexity}. Section~\ref{sec: Numerical Stability} discusses its numerical properties and the devastating example from Noferini and Townsend \cite{Noferini}. Section~\ref{sec: Numerical Experiments} demonstrates the numerical properties of the algorithm, including comparisons between the methods and numerical exploration of the devastating example. We finish by summarizing directions for future work.

All the methods described here are implemented in Python 3 and are freely available at \url{https://github.com/tylerjarvis/eigen_rootfinding}.

\section{Background}

\subsection{Eigenvalue Methods for Rootfinding}
\label{sec: Moller--Stetter Rootfinding}

The companion matrix of a univariate polynomial $p\in \C[x]$ is a special matrix $C$ whose characteristic polynomial is $p$; and thus, the roots of $p$ are the eigenvalues of $C$, which can easily be computed numerically. The companion matrix also represents the linear operator of multiplication-by-$x$ on the finite-dimensional quotient algebra $\C[x]/(p)$.
This generalizes nicely to higher dimensions, in a construction due to M\"oller and Stetter \cite{Stetter,StetterBook,Moller}, which we now review briefly.

For a system of polynomials $p_1,\dots, p_n \in \C[x_1,\dots, x_n]$ satisfying the assumptions in Section~\ref{sec:assumptions}, consider the quotient algebra $\quotientalgebra = \C[x_1,\dots, x_n]/\I$, where $\I = (p_1,\dots, p_n)$ is the ideal generated by the polynomials. Under our assumptions the dimension of $\quotientalgebra$ as a vector space, is exactly equal to the number $r$ of common roots in $\C^n$ of the system \cite{Stetter}.  By B\'{e}zout's theorem, $r$ is no greater than  $\prod_{i=1}^n \deg(p_i)$, and for a generic family of polynomials, equality holds \cite[p.430]{UAG}: 
\[
r = \prod_{i=1}^n \deg p_i.
\]

For any $g\in \C[x_1,\dots, x_n]$, multiplication by $g$ defines a linear operator $m_g: \quotientalgebra \rightarrow \quotientalgebra$ that maps each $p\in \quotientalgebra$ to $pg$.  
Given a vector-space basis $\mathcal{B} = \{b_1, b_2, \dots, b_\numroots\}$ of $\quotientalgebra$, the operator $m_g$ has a matrix representation $M_g$, which we call the \emph{M\"{o}ller--Stetter matrix} of $g$.
It can be shown that if $\z$ is a common root of $p_1,\dots,p_n$, then $g(\z)$ is an eigenvalue of 
$M_g$.
If the values of $g$ at all of the $r$ roots are distinct, then $M_g$ is simple, and the row vector
\begin{equation}
\label{eq:left eigenvectors of MS matrix}
\begin{bmatrix}
b_1(\z) & b_2(\z) & \dots  & b_\numroots(\z)\end{bmatrix}
\end{equation}
is a left eigenvector associated with the eigenvalue $g(\z)$ (see  \cite[Chapter 2]{StetterBook}, \cite{Stetter}, or \cite[Chapter 4]{Cox2}). For a univariate polynomial of degree $d$ with the monomial basis $\mathcal{B}=\{1,x,x^2,\dots, x^{d-1}\}$, the matrix $M_x$ is the companion matrix. 

For solving a multivariate system, the M\"oller--Stetter matrices $M_{x_i}$ for $1\leq i \leq n$ are commonly used. The eigenvalues of $M_{x_i}$ are the $i$th coordinates of the roots, but they may not occur in the same order for each coordinate.
However, the matrices $M_{x_1}, \dots, M_{x_n}$ commute, so one method to find the zeros is to simultaneously diagonalize these $n$ commuting matrices to compute all $n$ coordinates of the roots \cite[p.4-5]{Telen}.

\subsection{Constructing a Basis for \texorpdfstring{$\quotientalgebra$}{A}}
The key to eigenvalue-based rootfinding is to construct an appropriate basis for the quotient algebra $\quotientalgebra$.  Gr\"obner and border bases are common choices when using exact arithmetic but are unstable when used with finite-precision arithmetic. However, there are some methods for trying to stably compute Gr\"obner and border bases in floating point arithmetic. These often use a combination of numerical and symbolic computations \cite{Kreuzer, Mourrain, Sasaki}.  

Telen and Van Barel devised a different method for constructing a basis of $\quotientalgebra$ \cite{Telen}, which we call \emph{direct Macaulay reduction}, described in Section~\ref{sec: Direct Macaulay Reduction}. Later, Telen, Mourrain, and Van Barel proposed a variant \cite{Telen4} that  we call \emph{null space Macaulay reduction} or simply the \emph{null space method}, described in Section~\ref{sec: Null Space Macaulay Reduciton}. The reasons for these names will become clear below.
Their methods construct the matrices $M_{x_i}$ in a way that is more stable than the methods using Gr\"obner or border bases \cite[p.16]{Telen}. Before describing both of these methods and two potential speedups (see Section~\ref{sec: Speedups}), we need to describe a fundamental tool they all have in common, namely, the Macaulay matrix. 

\subsubsection{The Macaulay Matrix}
\label{macaulay section}
A key tool in the methods used to construct a basis for $\quotientalgebra$ is the {Macaulay matrix}, which is constructed in a manner similar to the Sylvester matrix.
Given $p_1, p_2,\ldots, p_n \in \C[x_1,x_2,\ldots,x_n]$ and a positive integer $\dMac$,  the Macaulay matrix, $\Mac(\dMac)$, of degree $\dMac$ is constructed as follows.  The columns correspond to the various monomials in $\C[x_1,\dots, x_n]$ of total degree at most $\dMac$. The rows are coefficient vectors of polynomials of degree at most $\dMac$ of the form 
\begin{equation*}\label{eq:macaulay-rows}
x_1^{k_1} x_2^{k_2}\cdots x_n^{k_n} p_i
\end{equation*}
for some $i$ and some choice of positive integers $k_1,\dots, k_n$. The ordering of the rows is not important, but every such polynomial of degree at most $\dMac$ is represented in the Macaulay matrix.  

For example, given the system of polynomials
\[   \left\{
\begin{array}{ll}
      p_1 =& y^2 + 3xy - 4x + 1 \\
      p_2 =& -6xy - 2x^2 + 6y +3 \\
\end{array}
\right. 
\]
in $\C[x,y]$,
the degree-$3$ Macaulay matrix $\Mac(3)$ is as follows.
\[
\begin{blockarray}{ccccccccccc}
y^3 & xy^2  & x^2y  & x^3   & y^2   & xy    & x^2   & y     & x     & 1 \\[1.2ex]
\begin{block}{[cccccccccc]c}
    & -6    & -2    &       & 6     &       &       & 3     &       &   & yp_2\\
    &       & -6    & -2    &       & 6     &       &       & 3     &   & xp_2\\
    &       &       &       &       & -6    & -2    & 6     &       & 3 & p_2\\
1   &  3    &       &       &       & -4    &       & 1     &       &   & yp_1\\
    &  1    & 3     &       &       &       & -4    &       & 1     &   & xp_1\\
    &       &       &       & 1     & 3     &       &       & -4    & 1 & p_1\\
\end{block}
\end{blockarray}
\]
Note that every row corresponds to a polynomial in the ideal $\I$.  The Macaulay matrix is valuable because performing row operations on the Macaulay matrix produces new rows that still represent elements of $\I$. If the degree $d$ is large enough, then this can be used to identify polynomials that form a basis for $\quotientalgebra$. For more details on the construction of the Macaulay matrix, see  \cite[p.9]{Macaulay}.

\section{Reduction Methods}
In this section we describe various methods for constructing a basis of $\quotientalgebra$ and for computing the M\"oller--Stetter matrices from the resulting basis.

\subsection{Direct Macaulay Reduction}
\label{sec: Direct Macaulay Reduction}
We now describe several methods for finding a basis $\quotientbasis$ for $\quotientalgebra$ directly from the Macaulay matrix $\Mac(\dMac)$ for $\dMac$ sufficiently large. We call these \emph{direct Macaulay reduction} methods.  These methods are variants on the method from \cite[p.9--12]{Telen}, where it is also shown that it suffices to take  
\begin{equation}\label{eq:dmac}
\dMac = 1 - n + \sum_{i=1}^n \deg p_i.
\end{equation}
In this case $\nullitymac{\dMac} = \numroots$.  From now on we always take $\dMac$ as given in Equation~\eqref{eq:dmac}. 

First, partition $\Mac(\dMac)$ into two submatrices $\Mac_1$ and $\Mac_2$, where $\Mac_1$ consists of the columns representing the degree-$d$ monomials and $\Mac_2$ corresponds to the rest of the columns (all lower-degree monomials). Perform a QR factorization to get $\Mac_1 = QR$. There are matrices $Z$ and $\Mac_3$ such that
\[Q\hermitian \begin{bmatrix}\Mac_1 & \Mac_2\end{bmatrix} =  
\begin{bmatrix}\hat{R}   & Z \\
                0           & \Mac_3
\end{bmatrix},
\]
where $\hat{R}$ is the invertible submatrix of $R$. The assumptions in Section~\ref{sec:assumptions} guarantee that $R$ is full rank.

Now factor $\Mac_3$ to get $\Mac_3 = X V\hermitian$, where $X$ is easy to convert to RREF and $V$ is some unitary transition matrix which maps the standard monomial basis for $\lowdegring$ (the polynomials of degree at most $d-1$) to a new basis $\ringbasis$. In \cite{Telen}, $\Mac_3$ is factored using QR with pivoting. We suggest using an SVD factorization instead for reasons discussed below. One could also use an LQ factorization.  In any case, it follows that 
\begin{equation}
\begin{bmatrix}\hat{R}   & Z \\
                0           & \Mac_3
\end{bmatrix}
\begin{bmatrix} I & 0 \\
                0 & V
\end{bmatrix}   
= \begin{bmatrix}\hat{R}   & ZV \\
                0          & X
\end{bmatrix}.
\end{equation}
Multiplying the rightmost columns of $\Mac(\dMac)$ by $V$ means that those columns now represent polynomials in the basis $\ringbasis$ instead of the original monomials.

After reducing $X$ to echelon form, removing rows of zeros at the bottom of the matrix, and performing back-substitution, we get what we call a \emph{reduced Macaulay matrix}.
The polynomials in $\ringbasis$ corresponding to the free columns of this reduced matrix form a basis $\quotientbasis$ for $\quotientalgebra$.

Different factorizations of $\Mac_3$ lead to different bases $\quotientbasis$. The pivoted QR factorization gives $V\hermitian = P\transpose$, so $\quotientbasis$ will be a monomial basis. Using an SVD gives $XV\hermitian=U\Sigma V\hermitian$, which allows some simplifications in reducing $X$. In particular, we have
\[
\begin{bmatrix} I   & 0 \\
                0   & U\hermitian
\end{bmatrix}
\begin{bmatrix}\hat{R}   & ZV \\
                0          & X
\end{bmatrix}
=\begin{bmatrix}\hat{R}   & Z_1 & Z_2 \\
                0          & \hat\Sigma & 0\\
                0 & 0 & 0
\end{bmatrix}
\]
where $\hat\Sigma$ is the nonzero diagonal submatrix of $\Sigma$. Of course there could be some difficulty in numerically determining the rank of $\Sigma$. However, we have assumed there are exactly $r$ roots of the system, so $\Mac_3$ has nullity $r$.
Since rows in the resulting matrix are in $\I$ and $\hat{\Sigma}$ is diagonal, every basis element corresponding to a column of $\hat{\Sigma}$ is in $\I$,
and all the relevant information from the Macaulay matrix can be obtained by backsolving the top portion of the matrix to get
\[
\begin{bmatrix} I & 0 & \hat{R}^{-1}Z_2 
\end{bmatrix}.
\]
The LQ factorization has similar properties to the SVD. In practice, using the SVD gives the most accurate results of the three potential factorizations without sacrificing speed. See Section~\ref{sec: Numerical Experiments} and Figure~\ref{fig:QRPvsSVD_res/eigs}.

Regardless of the factorization, the polynomials in $\quotientbasis$ all have degree strictly less than $\dMac$, so for $i=1,\ldots,n$, the polynomial $x_ib$ is of degree at most $\dMac$. Therefore, we can express $x_ib$ in terms of $\quotientbasis$ using the transition matrix and relations from the reduced Macaulay matrix. This makes it possible to construct M\"oller--Stetter matrices $M_{x_i}$.

For example, if we compute $\quotientbasis$ using the SVD factorization, we form the matrix 
$$F = \begin{bmatrix} 
        -\hat{R}^{-1}Z_2 \\              
        V_{:,-r:}
      \end{bmatrix}$$
whose rows show how to represent each monomial of degree at most $\dMac$ in terms of $\quotientbasis$. To compute $M_{x_i}$, we must determine how to express $x_i\mu$ in terms of $\quotientbasis$ for each monomial $\mu$ of degree strictly less than $\dMac$. Multiplication by $x_i$ can be performed symbolically by extracting the rows of $F$ corresponding to $x_i\mu$. Let $\texttt{idx}_i$ be the indices of these rows. Then since multiplication by $(V_{:,-\numroots})\hermitian$ maps from the standard basis for $\lowdegring$ to $\quotientbasis$, it follows that
\[
M_{x_i} = (V_{:,-\numroots})\hermitian F_{\texttt{idx}_i:,:}.
\]
This is detailed in Algorithm~\ref{alg: MS Matrix}.

The eigenvalues of $M_{x_i}$ are the $i$th coordinates of the roots of the system. To extract the coordinates in their corresponding ordered tuples, one should diagonalize the $M_{x_i}$ matrices simultaneously. Unfortunately, many human-generated problems have eigenvalues with multiplicity greater than one in one or more coordinates, and thus are not uniquely diagonalizable. In order to avoid this, first perform a random orthogonal\footnote{One could use a more general unitary matrix here, but because many of the systems we want to solve are real, with real roots, it is a little cleaner to use a real orthogonal matrix instead.} 
change of coordinates $W$, to obtain new (rotated) M\"{o}ller--Stetter matrices $M_{y_j} = \sum_{i=1}^{n} W_{ji}M_{x_i}$
expressed in terms of new coordinates, $y_1,\ldots,y_n$. 

To perform the imultaneous diagonalization, Telen and Van Barel use a canonical polyadic decomposition (CPD), also known as CANDECOMP or PARAFAC, of the tensor formed by stacking an $\numroots\times \numroots$ identity matrix with $M_{y_1},\ldots, M_{y_n}$; see \cite[p.14]{Telen}. For more on the equivalence of CPD and simultaneous diagonalization see \cite{Lathauwer} or \cite[p.366]{BCS}. The standard implementations of CPD in Python performed poorly for us (they were both slow and inaccurate), so instead we use a Schur Decomposition $M_{y_1} = UTU\hermitian$. Because the matrices commute, $U\hermitian M_{y_j} U$ triangularizes $M_{y_j}$ for $j=1,\ldots,n$. The $k$th diagonal entry of $U\hermitian M_{y_j} U$ is the $y_j$-coordinates for the $k$th root of the system; in other words, $U\hermitian M_{y_j} U$ not only triangularizes the system, but also does so in such a way that preserves the ordering of the roots. 
While this triangularization is exact in theory, in practice more computational precision is gained by computing the eigenvalues of every $M_{y_j}$ independently (using QR iteration, for example), and then matching them to their nearest neighbor in the ordering given by the Schur Decomposition. Finally, the $y_j$-coordinates are rotated back to $x_i$-coordinates via left multiplication by $W\transpose$. This is detailed in  Algorithm~\ref{alg: Simul Diag}.

\begin{algorithm}[H]
\caption{Direct Macaulay Solver using SVD}
\label{alg: MS Matrix}
\begin{algorithmic}[1]
\Procedure{\texttt{MacaulaySVD}}{$p_1,...,p_n$}
\State $\dMac \leftarrow  1 - n + \sum_{i=1}^n \texttt{deg} (p_i)$ \Comment{degree of Macaulay matrix}
\State $\numroots \leftarrow  \prod_{i=1}^n \texttt{deg} (p_i)$ \Comment{number of roots}

\State $\Mac,\texttt{col} \leftarrow \texttt{macaulay}(p_1,...,p_n)$
\Comment{Macaulay matrix and column labels}
\State $\texttt{cut} \leftarrow {\binom{d+n-1}{d}}$ 
\Comment number of degree-$d$ columns
\State $\Mac_1 \leftarrow \Mac_{:,:\texttt{cut}}$
\Comment{split into high and low degree columns}
\State $\Mac_2 \leftarrow \Mac_{:,\texttt{cut}:}$
\State $Q,R \leftarrow  \texttt{qr}(\Mac_{1})$ 
\Comment{$QR$-factor}
\State $\hat{R} \leftarrow R_{:\texttt{cut},:}$
\Comment {$\hat{R}$ is nonzero rows of $R$}
\State $Z \leftarrow (Q\hermitian \Mac_2)_{:\texttt{cut},:}$
\Comment{desired part of $Q\hermitian \Mac_2 = 
 \begin{bmatrix} Z \\ \Mac_3
 \end{bmatrix} $}
\State $\Mac_3 \leftarrow (Q\hermitian \Mac_2)_{\texttt{cut}:,:} $
\vspace{.15cm}
\State $U, \Sigma, V^{H} \leftarrow
\texttt{svd}(\Mac_{3})$
\State $Z_2 \leftarrow ZV_{:,-\numroots:}$
\Comment{desired part of $ZV$ from $ 
    \begin{bmatrix} \hat{R} & ZV \\ 0 & U \Sigma 
    \end{bmatrix}
$}
\vspace{.15cm}
\State $\tilde Z_2 \leftarrow \hat R^{-1} Z_2$
\Comment{back substitution}
\State $F \leftarrow \begin{bmatrix} 
        -\hat{R}^{-1}Z_2 \\              
        V_{:,-r:}
      \end{bmatrix}$ \Comment{matrix to convert monomials to $\quotientbasis$}
\For{$i$ in $1, ..., n$}
\Comment{compute $M_{x_i}$}
\State $\texttt{idx}_i \leftarrow \texttt{get\_product\_idx}(i,\texttt{col},\texttt{cut})$
\Comment{shift column labels to multiply}
\State $M_{x_i} \leftarrow (V_{:,-\numroots})\hermitian F_{\texttt{idx}_i:,:}$
\EndFor
\State $\texttt{roots} \leftarrow \texttt{sim\_diag}(M_{x_1},\ldots,M_{x_n})$
\Comment{simultaneous diagonalization}\\ \Return \texttt{roots}
\EndProcedure
\end{algorithmic}
\end{algorithm}

\begin{algorithm}
\begin{algorithmic}[1]
\caption{Simultaneous Diagonalization Method}
\label{alg: Simul Diag}
\Procedure{\texttt{sim\_diag}}{$(M_{x_1},\ldots,M_{x_n})$}
\State $W \leftarrow \texttt{rand\_orthog\_matrix}(n)$ \Comment{choose a rotation}
\For{$j$ in $1,\ldots,n$}
\State $M_{y_j} \leftarrow \sum_{i=1}^{n} W_{ji}M_{x_i}$ \Comment{rotate coordinates}
\EndFor
\State $\texttt{roots} \leftarrow \texttt{empty}(n,\numroots)$
\Comment{initialize root array}
\State $U,T \leftarrow \texttt{schur}\left(M_{y_1}\right)$\Comment{Schur decomposition}
\State $\texttt{roots}_{1,:} \leftarrow \texttt{diag}(T)$ \Comment{$y_1$ coordinates of roots}
\For{$j$ in $2,\ldots,n$} \Comment{find remaining coordinates}
\State $\texttt{ordered\_eigs} \leftarrow \texttt{diag}\left(U\hermitian M_{y_j} U\right)$
\Comment{ordered to match $\texttt{roots}_{1,:}$}
\State $\texttt{unordered\_eigs} \leftarrow \texttt{eigvals} \left(M_{y_j}\right)$
\Comment{more precisely computed}
\State $\texttt{roots}_{j,:} \leftarrow \texttt{match\_eigs}(\texttt{ordered\_eigs},\texttt{unordered\_eigs})$
\EndFor
\State \Return $W\transpose \texttt{roots}$
\Comment{rotate coordinates back}
\EndProcedure
\end{algorithmic}
\end{algorithm}

\subsection{Null Space Macaulay Reduction}
\label{sec: Null Space Macaulay Reduciton}
In \cite{Telen4}, Mourrain, Telen, and Van Barel demonstrated a different way to construct M\"{o}ller--Stetter matrices using the Macaulay matrix. By restricting the null space of the Macaulay matrix to certain known subspaces, we can directly create the M\"oller--Stetter matrices representing multiplication by the monomials $x_{1}, \dots x_{n}$. One advantage of using this \emph{null space method} over the direct Macaulay reduction outlined earlier is a potential speed increase as described in Section~\ref{sec: Degree by Degree}. For a performance comparison of the various methods see Section~\ref{sec: Numerical Experiments}. 
    
The first step in the null space method is to construct a matrix $N$ whose columns form a basis for the null space of the Macaulay matrix $\Mac(\dMac)$, with $\dMac = 1 - n + \sum_{i=1}^n \deg p_i$ as before. 
Split $N\hermitian$ into submatrices $N_1$ and $N_2$ where $N_1$ contains the columns corresponding to degree-$d$ monomials, and $N_2$ contains the rest of the columns. To find a basis $\quotientbasis$ for $\quotientalgebra$, compute a factorization $N_2 = XV\hermitian$ where, similar to direct Macaulay reduction, $X$ is easy to convert to RREF and $V$ is unitary. However, $\quotientbasis$ now corresponds to the pivot columns in $N\hermitian$ instead of free columns. Using this factorization, one can construct a matrix $F$ that converts each monomial to its representation in $\quotientbasis$ in order to build M\"{o}ller--Stetter matrices.

For example, using an SVD factorization, we take $N_2 = U \Sigma V\hermitian$ and denote the nonzero invertible submatrix of $\Sigma$ by $\hat\Sigma$. Then $\Sigma V\hermitian = \hat \Sigma (V_{:,:r})\hermitian$ and 
$$F\hermitian = \hat\Sigma^{-1} U\hermitian N = \begin{bmatrix}
    \hat\Sigma^{-1} U\hermitian N_1 & (V_{:,:r})\hermitian
    \end{bmatrix}.$$
Of course, a compact SVD factorization would suffice for this computation.
A similar process can be used to compute $\mathcal B$ and $F$ from an LQ or QRP factorization of $N_2$.
 For more details on using null spaces of $\Mac(\dMac)$ to compute M\"{o}ller--Stetter matrices, see \cite{Telen2,Telen4}. Null space computations can be expensive and slow. In the following two subsections we briefly review the two speedups given in \cite{Telen4}.
  
\section{Speedups}
\label{sec: Speedups}

\subsection{Degree by Degree construction}
\label{sec: Degree by Degree}
One way that Telen, Mourain, and Van Barel propose to compute $N$ more efficiently is to exploit the fact that certain submatrices of a Macaulay matrix are lower-degree Macaulay matrices. The \emph{degree by degree} method iteratively constructs $\Mac(d_{k+1})$ and its null space $N_{k+1}$ from a lower-degree Macaulay matrix $\Mac(d_k)$ and its null space $N_{k}$. The sequence of degrees $d_k$ could use any increment, but we choose $d_{k+1} = 1+d_k$ in our numerical experiments and complexity analysis. This iterative process continues until the Macaulay matrix $\Mac(\dMac)$ and its null space $N_{\dMac}$ are computed, possibly saving time from computing $\Mac(\dMac)$ and $N_{\dMac}$ directly. In this section we give the details of this approach.

To build $\Mac(d_{k+1})$ from $\Mac(d_k)$, observe that $\Mac(d_k)$ is a submatrix of $\Mac(d_{k+1})$. As an example, consider the system of two equations used earlier.
\[   \left\{
\begin{array}{ll}
      p_1 =& y^2 + 3xy - 4x + 1 \\
      p_2 =& -6xy - 2x^2 + 6y +3 \\
\end{array}
\right.
\]
The degree-2 Macaulay matrix $\Mac(2)$ is as follows.
\[
\begin{blockarray}{ccccccc}
y^2   & xy    & x^2   & y     & x     & 1 \\
\begin{block}{[cccccc]c}
 1&      3&       &       &     -4&      1&   p_1\\
   &     -6&     -2&      6&       &      3&   p_2\\
\end{block}
\end{blockarray}
\]
Now compare this with $\Mac(3)$, slightly reordering the rows from when this matrix was presented earlier. 
\[
\begin{blockarray}{ccccccccccc}
y^3 & xy^2  & x^2y  & x^3   & y^2   & xy    & x^2   & y     & x     & 1 \\
\begin{block}{[cccccccccc]c}
    &       &       &       & 1     & 3     &       &       & -4    & 1 & p_1\\
    &       &       &       &       & -6    & -2    & 6     &       & 3 & p_2\\
    & -6    & -2    &       & 6     &       &       & 3     &       &   & yp_2\\
    &       & -6    & -2    &       & 6     &       &       & 3     &   & xp_2\\
1   &  3    &       &       &       & -4    &       & 1     &       &   & yp_1\\
    &  1    & 3     &       &       &       & -4    &       & 1     &   & xp_1\\
\end{block}
\end{blockarray}
\]
Notice the first two rows of $\Mac(3)$ consist of $\Mac(2)$ with 0's in the higher degree columns not represented in $\Mac(2)$. Furthermore, the other rows of $\Mac(3)$ are just the entries of $\Mac(2)$ translated into appropriate column placements based on which monomial we multiply $p_i$ by. We let $B$ denote the portion of the rows beneath $\Mac(2)$, and we let $A$ denote the portion beneath the zero block. 
Thus we can construct $\Mac(3)$ from $\Mac(2)$ without actually performing polynomial-monomial multiplication. 
More generally, if we have the Macaulay matrix $\Mac(d_{k})$ for some degree $d_{k}$, we have 
$$\Mac(d_{k+1}) = \begin{bmatrix}
0 & \Mac(d_k) \\
A_k & B_k
\end{bmatrix}$$ 
where $A_k$ and $B_k$ can be easily obtained from $\Mac(d_{k})$.
 
We now move to the problem of computing $N_{k+1}$ from $N_k$. In a slight abuse of notation, let $N_k$ be a matrix representation of a basis for the null space of $\Mac(d_k)$. 
Define $$\hat{N}_{k+1} = \begin{bmatrix}
 I & 0 \\ 0 & N_k
 \end{bmatrix},$$
where the identity matrix has dimension equal to the number of monomial columns added when creating $\Mac(d_{k+1})$ from $\Mac(d_k)$.
Let $N_{k+1} = \hat{N}_{k+1} L_{k+1}$
where $L_{k+1}$ is a matrix whose columns span the kernel of
$$\hat{N}_{k+1}
 \begin{bmatrix}
 A_k \\ B_k
 \end{bmatrix}
 = \begin{bmatrix}
 A_k & B_k N_k
 \end{bmatrix}.$$
Then  
\begin{equation*}
\Mac(d_{k+1})N_{k+1}
=\begin{bmatrix}
0 & \Mac(d_k) \\
A_k & B_k
\end{bmatrix}\begin{bmatrix}
 I & 0 \\ 0 & N_k
 \end{bmatrix}
 L_{k+1} = 0
\end{equation*}
and so $N_{k+1}$ spans the null space of $\Mac(d_{k+1})$.
 
 To compute $L_{k+1}$ we must determine the nullity of $\begin{bmatrix}
 A_k & B_k N_k
 \end{bmatrix}$, which is not numerically straightforward, but can be computed using the following lemma.
 
\begin{proposition}\label{lem:koszul-nullity}
Let $\pd_i = \deg(p_i)$. Using the notation of this section, the nullity of $\begin{bmatrix}
 A_k & B_k N_k
 \end{bmatrix}$ is
 \begin{align}
 \nullity\left(\begin{bmatrix}
 A_k & B_k N_k
 \end{bmatrix}\right) & =  \nullitymac{d_{k+1}}\label{eq:nullity}\\
 &= \sum_{j=0}^n (-1)^j \sum_{i_1<\cdots<i_j} \binom{n+d_{k+1} - \sum_{\ell=1}^j \pd_{i_\ell}}{n},\notag
 \end{align}
 where  we set $\binom{a}{b} = 0$ unless $a,b\ge 0$ and $a\ge b$.  Moreover, \[
 \nullitymac{\dMac} = \nullitymac{\dMac-1} = r = \prod_{i=1}^n \pd_i,
 \]
 where $\dMac$ is given in Equation~\eqref{eq:dmac}.
\end{proposition}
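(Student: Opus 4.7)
The plan is to split the statement into three parts: (i) the matrix-theoretic identity $\nullity([A_k \mid B_k N_k]) = \nullitymac{d_{k+1}}$, (ii) the combinatorial formula in \eqref{eq:nullity}, and (iii) the ``moreover'' specialization at $d = \dMac$ and $d = \dMac-1$. Part (i) follows from a direct block-structure analysis of $\Mac(d_{k+1})$, while parts (ii) and (iii) reduce to Hilbert-function computations via the Koszul complex of a regular sequence.

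For (i), the calculation in the excerpt already shows that the columns of $N_{k+1} = \hat N_{k+1} L_{k+1}$ lie in $\ker \Mac(d_{k+1})$, so it remains to prove the reverse inclusion. Given $v = (v_1, v_2)\transpose \in \ker \Mac(d_{k+1})$ partitioned to match the block decomposition, the top block row forces $\Mac(d_k) v_2 = 0$, so $v_2 = N_k\alpha$ for some $\alpha$; the bottom block row then yields $A_k v_1 + B_k N_k\alpha = 0$, i.e.\ $(v_1,\alpha)\transpose \in \ker([A_k \mid B_k N_k])$. Writing $(v_1,\alpha)\transpose = L_{k+1}\beta$ recovers $v = \hat N_{k+1} L_{k+1}\beta = N_{k+1}\beta$. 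Since $\hat N_{k+1}$ is block-diagonal with full-column-rank blocks, the map $L \mapsto \hat N_{k+1}L$ is injective, so $\nullity([A_k \mid B_k N_k]) = \rank(L_{k+1}) = \dim \ker \Mac(d_{k+1}) = \nullitymac{d_{k+1}}$.

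For (ii), first identify $\nullitymac{d}$ with a Hilbert function: the row span of $\Mac(d)$ is the degree-at-most-$d$ part $\I_{\leq d}$ of the ideal, so $\nullitymac{d} = \binom{n+d}{n} - \dim \I_{\leq d} = \dim(\C[x_1,\ldots,x_n]/\I)_{\leq d}$. Homogenize each $p_i$ to a polynomial $\tilde p_i \in \C[x_0,\ldots,x_n]$ of degree $\beta_i$; the no-roots-at-infinity hypothesis makes $\tilde p_1,\ldots,\tilde p_n$ a homogeneous regular sequence in $n+1$ variables, and its graded degree-$d$ quotient dimension matches the filtered quantity above. The Koszul complex of $\tilde p_1,\ldots,\tilde p_n$ is then an exact graded resolution of that quotient, so taking the alternating sum of the degree-$d$ dimensions and using $\dim \C[x_0,\ldots,x_n]_d = \binom{n+d}{n}$ together with the Koszul degree shifts by $\sum_\ell \beta_{i_\ell}$ yields exactly \eqref{eq:nullity}. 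For (iii), the Hilbert series of the complete intersection $\C[x_0,\ldots,x_n]/(\tilde p_1,\ldots,\tilde p_n)$ equals $\prod_i(1+t+\cdots+t^{\beta_i-1})/(1-t)$; its numerator has degree $\sum(\beta_i-1) = \dMac - 1$ and value $\prod_i \beta_i = r$ at $t = 1$, so the Hilbert function stabilizes to $r$ for every $d \geq \dMac - 1$, giving $\nullitymac{\dMac-1} = \nullitymac{\dMac} = r$.

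The main obstacle will be the dehomogenization step in (ii)---pinning down the identification between the graded degree-$d$ quotient of $\C[x_0,\ldots,x_n]/(\tilde p_1,\ldots,\tilde p_n)$ and the filtered degree-$\leq d$ quotient of $\C[x_1,\ldots,x_n]/\I$. This relies on the no-roots-at-infinity hypothesis ensuring $x_0$ is a non-zero-divisor modulo $(\tilde p_1,\ldots,\tilde p_n)$; once that is in hand, the remaining Koszul-complex and Hilbert-series arguments are classical.
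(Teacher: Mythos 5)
Your argument for part (i) is essentially the same block-structure argument the paper gives, so I'll focus on (ii) and (iii), where you take a genuinely different route. The paper works directly with the filtered structure on $\pring = \C[x_1,\dots,x_n]$: it writes down the Koszul complex of the inhomogeneous $p_i$, invokes exactness of the degree-$\le d_k$ truncations, and reads off the alternating-sum formula from the spaces $\pring(-\beta_i)_{\le d_k}$. For the ``moreover'' clause it then argues by hand that each binomial coefficient agrees with its polynomial form once $t \ge \sum\beta_i - n$, so the Hilbert function has already reached the Hilbert polynomial $r$ at $t = \dMac-1$. You instead homogenize to $\tilde p_1,\dots,\tilde p_n \in \C[x_0,\dots,x_n]$, use the genuine graded Koszul complex of the regular sequence there, and identify the degree-$d$ graded piece of the homogeneous quotient with the degree-$\le d$ filtered piece of $\quotientalgebra$ via dehomogenization at $x_0=1$. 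Then (iii) drops out of the Hilbert series $\prod_i(1+t+\cdots+t^{\beta_i-1})/(1-t)$: the numerator has degree $\dMac-1$ and value $r$ at $t=1$, so the Hilbert function stabilizes at $r$ from degree $\dMac-1$ on. This is a cleaner and more conceptual way to get (iii) than the paper's binomial-coefficient check.

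The one place you should be careful is the identification $\nullitymac{d} = \dim(\C[x_1,\dots,x_n]/\I)_{\le d}$. The row space of $\Mac(d)$ is $\sum_i \pring_{\le d-\beta_i}\,p_i$, which is contained in $\I_{\le d}$ but is not equal to it for arbitrary inhomogeneous generators. Under the paper's standing hypotheses (no roots at infinity) the leading forms of the $p_i$ form a regular sequence, which is exactly what makes the two agree at every degree and makes $x_0$ a non-zero-divisor on $\C[x_0,\dots,x_n]/(\tilde p_1,\dots,\tilde p_n)$; you flag this dependence correctly, but it is worth stating explicitly that you are using the H-basis/regular-leading-forms property, since this is the load-bearing step that ties $\nullitymac{d}$ to a Hilbert function rather than to something larger.
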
 
\begin{proof}
 First, note that  $\rank(L_{k+1}) \le \nullitymac{d_{k+1}}$ because $\begin{bmatrix}
 I & 0 \\ 0 & N_k
 \end{bmatrix}
 L_{k+1}$ is in the kernel of $\Mac(d_{k+1})$.  
 Conversely, if $(\mathbf{x}_1, \mathbf{x}_2)$ is in the kernel of $\Mac(d_{k+1})$, then $\mathbf{x}_2$ is annihilated by $\Mac(d_k)$, and hence must be of the form $N_k\mathbf{y}$ for some $\mathbf{y}$.  This shows that $(\mathbf{x}_1, \mathbf{y})$ is in the range of $L_{k+1}$, hence $\rank(L_{k+1}) = \nullitymac{d_{k+1}}$. 
 
 The nullity of $\Mac(d_k)$ can be computed from the Hilbert function and the Koszul complex.  Let $\pring = \C[x_1,\dots, x_n]$, considered as a graded $\C$-algebra.  For any graded $\pring$-module $A$ and any $t\in \N$ let $A_{\le t}$ denote the subspace of all elements of degree at most $t$.
 For any $k\in \Z$ let $A(a)$ be $A$ with its grading shifted by $a$.  The Koszul complex of $p_1,\dots, p_n$ is the graded complex
 \begin{align*} 
 \cdots & \rTo^{f_4} \bigoplus_{i_1<i_2< i_3} \pring\left(-\sum_{\ell=1}^3\pd_{i_\ell}\right) \rTo^{f_3}  \bigoplus_{i_1<i_2} \pring(-\pd_{i_1} - \pd_{i_2})\\ & \rTo^{f_2}
 \bigoplus_{i=1}^n \pring(-\pd_i) \rTo^{f_1}  \pring \rTo \quotientalgebra \rTo 0,
 \end{align*}
 where $f_1$ maps any $q \in \pring\left(-\pd_{i}\right)$ to $q p_{i}\in \pring$, and the other $f_k$ are defined as an appropriate alternating sum of similar terms (see \cite[Chapter 17]{Eisenbud} or \cite[Chapter 6]{Cox2}). Specifically, the image of $f_1$ is the ideal $\I$, and the row space of $\Mac(d_k)$ 
 corresponds to the image of  $\bigoplus_{j=1}^n \pring(-\pd_j)_{\le d_k}$ under the map $f_1$.  Moreover the space \(\pring_{d_k}\) is spanned by the monomials that correspond to the columns of $\Mac(d_k)$.
 Thus  $\nullitymac{d_k}$ is the dimension of the subspace $\quotientalgebra_{\le d_k}$ of the quotient algebra $\quotientalgebra$ spanned by monomials of degree at most $d_k$.

Our assumptions on $\I=(p_1,\dots, p_n)$ guarantee that the graded Koszul complex for $p_1,\dots, p_n$ is exact, therefore \(\dim(\quotientalgebra_{\le d_k}) \) is the alternating sum of the corresponding dimensions of the terms in the Koszul complex.  It is straightforward to verify that \(\dim(\pring(-a)_{\le t}) = \binom{n-a + t}{n}\) for all $n,a,t\in \N$, from which the Equation~\ref{eq:nullity} follows.

Finally, the Hilbert polynomial $P_{\quotientalgebra} (t)$ of $\quotientalgebra$ is constant $P_{\quotientalgebra} (t) = r = \prod_{i=1}^n \beta_i$, and the Hilbert function $\varphi_{\quotientalgebra}(t)$ of $\quotientalgebra$ agrees with $P_{\quotientalgebra} (t)$ whenever $t$ is large enough that the terms $\binom{n+ t - \sum_{\ell=1}^j \pd_{i_\ell}}{n}$ can all be written as polynomials
\[
\binom{n + t - \sum_{\ell=1}^j \pd_{i_\ell}}{n} = \frac{\prod_{j=0}^{n-1} (n + t - j - \sum_{\ell=1}^j \pd_{i_\ell} )}{n!}.
\]
It's enough to check this condition for the final term 
\begin{equation}\label{eq:binom-polynom}
\binom{n + t - \sum_{i=1}^n \pd_{i}}{n} = \frac{\prod_{j=0}^{n-1} (n + t - j - \sum_{i=1}^n \pd_{i} )}{n!}.
\end{equation}
This clearly holds whenever $\sum_{i=1}^n \pd_{i}\le t$, by the definition of the binomial coefficient.  But it also holds when $    \sum_{i=1}^n \pd_{i} - n \le t < \sum_{i=1}^n \pd_{i}$  because both the binomial coefficient and the polynomial on the right side of Equation~\eqref{eq:binom-polynom} vanish for these values of $t$.  Thus \[
\nullitymac{\dMac-1} = \dim(\quotientalgebra_{\le \dMac-1}) = \varphi_{\quotientalgebra}(d-1) = P_{\quotientalgebra}(d-1) = r = \nullitymac{\dMac}, 
\]
as required.
\end{proof}

 The main advantage of the degree by degree construction is avoiding the costly computation of the null space of the entire matrix $\Mac(\dMac)$, usually done by computing the SVD, and instead performing many smaller calculations. This makes it a potential improvement for the null space Macaulay methods but not the direct Macaulay reduction methods. For more details about this method, see \cite{Telen4}.
 
\subsection{Random Combinations}
\label{sec: Random Combinations}
One can take advantage of the structure of the Macaulay matrix to reduce its size. The Macaulay matrix is row rank deficient. Because every row of the Macaulay matrix represents a polynomial in the ideal, any linear combination of the rows also represents a polynomial in the ideal. Thus we can take $\rank\left(\Mac(\dMac)\right)$ random linear combinations of the rows of $\Mac(\dMac)$, and get a matrix with the same rank and kernel as $\Mac(\dMac)$. 
One way to do this is to let $C$ be a  
\[
\left[\binom{\dMac + n}{n} - r \right]\times \sum_{i=1}^{n}\binom{d - \deg p_{i} + n}{n}
\]
matrix with entries drawn from the standard normal distribution.
With probability one, the product matrix $C \Mac(\dMac)$ has full row rank and has the same nullspace as $\Mac(\dMac)$.

This new matrix is smaller than $\Mac(\dMac)$, and it preserves the range and the kernel. Direct Macaulay reduction and null space methods can then be applied to this new matrix. In our numerical experiments, we found that random combinations improved the speed of the direct Macaulay methods more than it improved the null space methods. However, this smaller matrix may or may not behave well in calculations; see Section~\ref{sec: compare methods}. 

\section{Temporal Complexity}
\label{sec: temporal complexity}

\par In this section, we compute the temporal complexities of the various algorithms discussed in this paper. For this section, we assume that the factorization step in the direct Macaulay reduction method and the null space Macaulay reduction method uses the SVD variant. We do this in part because a singular value decomposition has the same asymptotic (big-O) complexity as an LQ or QRP factorization, 
 but also because our numerical experiments found that using the SVD gives the best results without sacrificing speed. See Section~\ref{sec: SVD vs QRP}.

\subsection{Background and Assumptions}
We only show the complexities of the algorithms up to the reduction step (i.e., forming $F$), but not forming M\"{o}ller-Stetter matrices or finding the roots. This is because once the reduction step is complete, each method constructs the M\"{o}ller-Stetter matrices and extracts the roots in the same way. By comparing with the complexities presented below, it is easy to verify that forming M\"{o}ller-Stetter matrices and computing eigenvalues is asymptotically less expensive than the reduction step.

Because of the nature of the rootfinding problem, there are actually two variables to consider when computing the temporal complexity: degree and dimension. Rather than letting both degree and dimension go to infinity simultaneously, we give two asymptotic bounds per algorithm; one bound is for fixed dimension, increasing degree, and the other is for increasing dimension, fixed degree.
For simplicity, we assume we are given a system of polynomials of the same degree $\degree$ in $\dimension$ dimensions with $\degree > 1$ and $\dimension > 1$.

We define a \emph{tight asymptotic bound} for
$f(\dimension,\degree)$ as $\degree\rightarrow\infty$ to be a function $g(\dimension,\degree)$ such that $f = O(g)$ and $f = \Omega(g)$. Intuitively, this is a bound that cannot be improved. Formally, we have 
$$0<\degreeliminf \frac{f(\degree,\dimension)}{g(\degree,\dimension)}\leq \degreelimsup \frac{f(\degree,\dimension)}{g(\degree,\dimension)} < \infty.$$
If $\degreelim \frac{f(\degree,\dimension)}{g(\degree,\dimension)}$ exists, this is equivalent to $f \sim Cg$ for some constant $C>0$.
We can similarly define tight asymptotic bounds as $\dimension\rightarrow\infty$.

Finally, we use the convention that $\binom{a}{b} = 0$ if $a < 0$, $b < 0$ or $a < b$, i.e. if it is not well defined.

\subsection{Basic Asymptotic Complexities}
We briefly summarize the complexity of the major linear algebra routines within our algorithm. 
\begin{itemize}
    \item The complexity of computing the QR factorization of an $m \times n$ matrix is $O(mn^2)$. We denote this $\QR(m,n) = mn^2$.
\item The complexity of computing the SVD of an $m \times n$ matrix is $O(mn^2)$, assuming $m \geq n$, so in general it is $mn\min(m,n)$. We denote this $\SVD(m,n) = mn\min(m,n)$.

\item The complexity of matrix multiplication of a dense $m \times n$ and a dense $n \times k$ matrix is $O(mnk)$. We denote this $\MatMult(m,n,k) = mnk$. We recognize that matrix multiplication can be done with sub-cubic complexity, but most implementations use the simple cubic method.

\item The complexity of backsubstitution on a triangular $n \times n$ matrix against a $n \times m$ matrix is $O(mn^2)$. We denote this $\Backsolve(n,m) = mn^2$.
\end{itemize}

\subsection{Variable Definitions}

Let $\dMac$ be the Macaulay degree, $\dMac = \dimension\degree - \dimension + 1$ as mentioned in Section~\ref{sec: Direct Macaulay Reduction}.
We use the notation notation from \cite{Telen4}, with the following variables:

\begin{itemize}
    \item $\monomialsdeg{k}$ is the number of monomials of degree $k$, which is equal to $\binom{\dimension+k-1}{k}$.

    \item $\monomialsleqdeg{k}$ is the number of monomials of degree less than or equal to $k$, which is equal to $\binom{\dimension+k}{k}$.

    \item $\polysdeg{k}$ is the number of polynomials (rows) of degree $k$ in a Macaulay matrix of degree at least $k$. It is equal to $\sum_{i=1}^{\dimension}\monomialsleqdeg{k-\degree_i} = \dimension \monomialsleqdeg{k-\degree}$.  We have that $\polysdeg{k} = 0$ for $k < \degree$.

    \item $\polysinmac{k}$ is the number of rows in $\Mac(k)$. It is equal to $\sum_{i=1}^{k}\polysdeg{i} = n\sum_{i=\degree}^{k}\monomialsleqdeg{i-\degree}$.

\end{itemize}

Finally, we define a variable, $\alpha_k$ that will be used in examining the complexity of systems with constant degree and varying dimension.
\begin{definition}\label{def:alpha}
For $k \geq 2$, let $\alpha_k = (\frac{k}{k-1})^{k-1}$. Note that $\alpha_2 = 2$ and $\alpha_k$ is an increasing sequence with limit $e$.
\end{definition}

\subsection{Variable Asymptotic Bounds}
\label{sec: temporal complexity: variable bounds}

Tight asymptotic bounds for several relevant variables are summarized in Table~\ref{table: complexity bounds}. For proofs see Appendix~\ref{appendix: complexity proofs}. We combine these bounds into bounds on the complexity of each algorithm in sections~\ref{sec: temporal complexity: Direct Macaulay}-\ref{sec: temporal complexity: degree-by-degree}.

\begin{table}
    \centering
\begin{tabular}{ c|c|c } 
 Term 
 & fixed $\dimension$ and $\degree\rightarrow\infty$ 
 & fixed $\degree$ and $\dimension\rightarrow\infty$ \\ 
 \hline
 $\monomialsleqdeg{\dMac-1}$ & $\degree^\dimension$ & $\frac{1}{\sqrt{n}} \degree ^ \dimension \alpha_{\degree} ^ \dimension$ \\ 
 $\monomialsleqdeg{\dMac}$ & $\degree^\dimension$ & $\frac{1}{\sqrt{n}} \degree ^ \dimension \alpha_{\degree} ^ \dimension$ \\ 
 $\monomialsdeg{\dMac}$ & $\degree^{\dimension-1}$ & $\frac{1}{\sqrt{n}} \degree ^ \dimension \alpha_{\degree} ^ \dimension$ \\ 
  $\polysdeg{\dMac}$  & $\degree^\dimension$ & $\sqrt{n} \degree ^ \dimension \alpha_{\degree} ^ \dimension$ \\
 $\polysinmac{\dMac}$  & $\degree^{\dimension+1}$ & $\sqrt{n} \degree ^ \dimension \alpha_{\degree} ^ \dimension$ \\
 $\numroots$  & $\degree^{\dimension}$ & $\degree^{\dimension}$
\end{tabular}

\caption{Table summarizing the tight asymptotic bounds for each term relevant in the complexity analysis. Here, as defined in Definition~\ref{def:alpha}, we use $\alpha_\beta = \left(\frac{\beta}{\beta-1}\right)^{\beta-1}$, so $2 \le \alpha_\beta \le e$ for all $\beta > 1$. Of course, under our simplifying assumptions of this section $\numroots=\degree^\dimension$, but it is listed here for convenience. \label{table: complexity bounds}}
\end{table}

\subsection{Direct Macaulay SVD}
\label{sec: temporal complexity: Direct Macaulay}
The main steps of the direct Macaulay SVD method (see Section~\ref{sec: Direct Macaulay Reduction}) are as follows:
\begin{enumerate}
  \item Compute a QR decomposition of $\Mac_1$. This is $QR(\polysinmac{\dMac}, \monomialsdeg{\dMac})$.
  \item Multiply $Q\hermitian \Mac_2$. This is $\MatMult(\polysinmac{\dMac}, \polysinmac{\dMac}, \monomialsleqdeg{\dMac-1})$.
  \item Compute an SVD of $\Mac_3$. This is $\SVD(\polysinmac{\dMac}-\monomialsdeg{\dMac}, \monomialsleqdeg{\dMac-1})$.
  \item Multiply $ZV_{:,-\numroots:}$. This is $\MatMult(\monomialsdeg{\dMac}, \monomialsleqdeg{\dMac-1}, \numroots)$.
  \item Backsolve $R^{-1} Z_2$. This is $\Backsolve(\monomialsdeg{\dMac}, \numroots)$.
\end{enumerate}

Summing these gives a complexity of 
\begin{align*}
&\polysinmac{\dMac} \monomialsdeg{\dMac}^2 \\
&+ 
\polysinmac{\dMac}^2\monomialsleqdeg{\dMac-1} \\
&+ 
(\polysinmac{\dMac}-\monomialsdeg{\dMac})\monomialsleqdeg{\dMac-1}\min(\polysinmac{\dMac}-\monomialsdeg{\dMac}, \monomialsleqdeg{\dMac-1}) \\
&+ 
\monomialsdeg{\dMac} \monomialsleqdeg{\dMac-1} \numroots \\
&+
\monomialsdeg{\dMac}^2 \numroots.
\end{align*}
Tight asymptotic bounds in dimension and degree can be found by combining bounds for each term.
\begin{itemize}
\item For fixed $\dimension$ and $\degree\rightarrow\infty$, it is straightforward to verify that this becomes
$O(\degree^{3\dimension+2}).$
\item For fixed $\degree$ and $\dimension\rightarrow\infty$, a straightforward computation shows that the complexity is
$O(\sqrt{n}\beta^{3n}\alpha_{\beta}^{3n}).$
\end{itemize}

\subsection{Null Space Macaulay SVD}
\label{sec: temporal complexity: null space}
The main steps of the null space Macaulay SVD (see Section~\ref{sec: Null Space Macaulay Reduciton}) are as follows:
\begin{enumerate}
  \item Perform an SVD on the Macaulay Matrix. This is $\SVD(\polysinmac{\dMac}, \monomialsleqdeg{\dMac})$.
  \item Perform an SVD on $N_2$. This is $\SVD(\numroots, \monomialsleqdeg{\dMac-1})$.
  \item Multiply $U\hermitian N_1$. The multiplication by $\hat\Sigma^{-1}$ is clearly of lower complexity and will not be counted. This is $\MatMult(\numroots, \numroots, \monomialsdeg{\dMac})$.
\end{enumerate}

Summing these gives a complexity of 
\begin{align*}
    &\polysinmac{\dMac} \monomialsleqdeg\dMac \min(\polysinmac{\dMac}, \monomialsleqdeg\dMac) \\ &+ 
    \numroots \monomialsleqdeg{\dMac-1} \min(\numroots, \monomialsleqdeg{\dMac-1}) \\ &+ 
    \numroots^2 \monomialsdeg{\dMac}.
\end{align*}
\begin{itemize}
\item For fixed $\dimension$ and $\degree\rightarrow\infty$, a tight asymptotic bound is
$O(\degree^{3\dimension+1}),$
which is cheaper than direct Macaulay SVD by a factor of $\degree$.
\item For fixed $\degree$ and $\dimension\rightarrow\infty$, a tight bound is
$O(\sqrt{n}\beta^{3n} \alpha_{\beta}^{3n}),$
which is the same as for the direct Macaulay reduction method from the previous section.
\end{itemize}

\subsection{Random Combinations}
\label{sec: temporal complexity: rand combos}
Both direct and null space random combinations methods starts with a matrix multiplication that reduces the size of the Macaulay matrix (see Section~\ref{sec: Random Combinations}). This is $\MatMult(\monomialsleqdeg\dMac - \numroots, \polysinmac{\dMac}, \monomialsleqdeg\dMac)$.
We then can do either the direct Macaulay SVD or null space SVD reduction with the number of rows being $\monomialsleqdeg\dMac - \numroots$ instead of $\polysinmac{\dMac}$, so we can just use our previous analysis but replace each $\polysinmac{\dMac}$ by $\monomialsleqdeg\dMac - \numroots$ and add on a first step of $\MatMult(\monomialsleqdeg\dMac - \numroots, \polysinmac{\dMac}, \monomialsleqdeg\dMac)$.
\subsubsection{Direct Macaulay SVD Random Combinations}
\begin{enumerate}
   \item Matrix multiplity to reduce the size of the Macaulay Matrix. This is $\MatMult(\monomialsleqdeg\dMac - \numroots, \polysinmac{\dMac}, \monomialsleqdeg\dMac)$.
   \item Perform a QR on $\Mac_1$. This is $QR(\monomialsleqdeg\dMac - \numroots, \monomialsdeg{\dMac})$.
  \item Multiply $Q\hermitian \Mac_2$. This is $\MatMult(\monomialsleqdeg\dMac - \numroots, \monomialsleqdeg\dMac - \numroots, \monomialsleqdeg{\dMac-1})$.
  \item Perform an SVD on $\Mac_3$. This is $\SVD(\monomialsleqdeg\dMac - \numroots-\monomialsdeg{\dMac}, \monomialsleqdeg{\dMac-1})$.
  \item Multiply $ZV_{:,-\numroots:}$. This is $\MatMult(\monomialsdeg{\dMac}, \monomialsleqdeg{\dMac-1}, \numroots)$.
  \item Backsolve $R^{-1} Z_2$. This is $\Backsolve(\monomialsdeg{\dMac}, \numroots)$.
\end{enumerate}
Summing these gives a complexity of
\begin{align*}
&\left(\monomialsleqdeg\dMac - \numroots\right) \polysinmac{\dMac}\monomialsleqdeg\dMac) \\
&+ (\monomialsleqdeg\dMac - \numroots) \monomialsdeg{\dMac}^2 \\
&+ (\monomialsleqdeg\dMac - \numroots)^2\monomialsleqdeg{\dMac-1} \\
&+ (\monomialsleqdeg\dMac - \numroots-\monomialsdeg{\dMac})\monomialsleqdeg{\dMac-1}\min(\monomialsleqdeg\dMac - \numroots-\monomialsdeg{\dMac}, \monomialsleqdeg{\dMac-1}) \\
&+ \monomialsdeg{\dMac} \monomialsleqdeg{\dMac-1} \numroots \\
&+ \monomialsdeg{\dMac}^2 \numroots.
\end{align*}
\begin{itemize}
\item For fixed $\dimension$ and $\degree\rightarrow\infty$, a tight asymptotic bound is
$O(\degree^{3\dimension+1}),$
which is cheaper than Direct Macaulay SVD by a factor of $\degree$.
\item For fixed $\degree$ and $\dimension\rightarrow\infty$, a tight bound is
$O(\dimension^{-1/2}\beta^{3\dimension}\alpha_{\beta}^{3\dimension}),$
which is cheaper by a factor of $\dimension$ than null space or direct Macaulay methods.
\end{itemize}
\subsubsection{Nullspace SVD Random Combinations}
\begin{enumerate}
  \item Matrix multiply to reduce the size of the Macaulay Matrix. This is $\MatMult(\monomialsleqdeg\dMac - \numroots, \polysinmac{\dMac}, \monomialsleqdeg\dMac)$.
  \item Perform an SVD on the smaller Macaulay Matrix. This is $\SVD(\monomialsleqdeg\dMac - \numroots, \monomialsleqdeg\dMac)$.
  \item Perform an SVD on $N_2$. This is $\SVD(\numroots, \monomialsleqdeg{\dMac-1})$.
  \item Multiply $U\hermitian N_1$. The multiplication by $\hat\Sigma^{-1}$ is clearly of lower complexity and will not be counted. This is $\MatMult(\numroots, \numroots, \monomialsdeg{\dMac})$.
\end{enumerate}
Summing these gives a complexity of
\begin{align*}
& (\monomialsleqdeg\dMac - \numroots) \polysinmac{\dMac} \monomialsleqdeg\dMac \\
&+ (\monomialsleqdeg\dMac - \numroots) \monomialsleqdeg\dMac \min((\monomialsleqdeg\dMac - \numroots), \monomialsleqdeg\dMac) \\
&+ \numroots \monomialsleqdeg{\dMac-1} \min(\numroots, \monomialsleqdeg{\dMac-1}) \\
&+ \numroots^2 \monomialsdeg{\dMac}.
\end{align*}
\begin{itemize}
\item For fixed $\dimension$ and $\degree\rightarrow\infty$, a tight asymptotic bound is $O(\degree^{3\dimension+1}),$
which is the same as null space Macaulay SVD, but several lower-order terms are cheaper.
\item For fixed $\degree$ and $\dimension\rightarrow\infty$, a tight bound is $O(n^{-1/2}\beta^{3n}\alpha_{\beta}^{3n}).$
\end{itemize}
Note that the complexity for random combinations is the same for both direct Macaulay SVD and null space Macaulay SVD (compare the complexities in the previous section).

\subsection{Degree by Degree SVD}
\label{sec: temporal complexity: degree-by-degree}
Following the steps given in \cite{Telen4}, there are 3 steps for each iterative degree step of the degree-by degree-construction (see Section~\ref{sec: Degree by Degree}).

\begin{enumerate}
  \item Multiply $B_k$ by $\nullSpace{k}$. This is $\MatMult(\nullitymac{k}, \monomialsleqdeg{k}, T_{k+1})$.
  \item Find the kernel of $\begin{bmatrix}
 A_k & B_k N_k
 \end{bmatrix}$. This is $\SVD(\nullitymac{k}+\monomialsdeg{k+1}, T_{k+1})$.
 \item Multiply $L_{k+1}$ by $\hat{N}_{k+1}$. This really just requires multiplying part of $L_{k+1}$ by $N_k$. This is $\MatMult(\nullitymac{k+1}, \nullitymac{k}, \monomialsleqdeg{k})$.
\end{enumerate}

\begin{lemma}
With fixed degree, variable dimension, a tight asymptotic bound of the degree-by-degree construction is the same as the tight asymptotic bound of the final step.
\end{lemma}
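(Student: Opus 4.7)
The plan is to show that the per-iteration cost $F(k)$ (the sum of the three substep complexities listed above) grows geometrically in $k$ as $\dimension\to\infty$ with $\degree$ fixed, so that the total cost $\sum_{k=k_0}^{\dMac-1} F(k)$ is dominated by its final term. Since $\sum_{k} F(k) \ge F(\dMac-1)$ is immediate, only the matching upper bound $\sum_k F(k) = O(F(\dMac-1))$ requires real work.

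First I would rewrite each substep cost as a product of a bounded number of factors, each of which is either $\numroots=\degree^\dimension$, the dimension $\dimension$, or a binomial coefficient of the form $\binom{\dimension+j}{\dimension}$ (arising from $\monomialsleqdeg{k}$, $\monomialsdeg{k+1}$, or $T_{k+1}$). Every occurrence of $\nullitymac{k}$ can be replaced by $\min(\numroots, \monomialsleqdeg{k})$ without changing the asymptotic order, so every $k$-dependent factor becomes a binomial coefficient. The core estimate is then
\[
\frac{\binom{\dimension+j}{\dimension}}{\binom{\dimension+j+1}{\dimension}} \;=\; \frac{j+1}{\dimension+j+1},
\]
which is bounded above by some constant $\rho_0<1$ uniformly for $j\le \dMac = \dimension\degree - \dimension + 1$ as $\dimension\to\infty$ (the ratio approaches $(\degree-1)/\degree$ from above in this range).

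Each substep cost contains at least one $k$-dependent binomial factor and at most three such factors, while the remaining $\numroots$ and $\dimension$ terms are independent of $k$. This gives $F(k) \le \rho\, F(k+1)$ for some constant $\rho < 1$ uniform in $k$ and in all sufficiently large $\dimension$. Summing the geometric series,
\[
\sum_{k=k_0}^{\dMac-1} F(k) \;\le\; \frac{F(\dMac-1)}{1-\rho} \;=\; O(F(\dMac-1)),
\]
which together with the trivial lower bound gives the tight asymptotic equivalence. The main subtlety is that $\nullitymac{k}$ saturates at $\numroots$ for $k$ close to $\dMac$ and so does not itself exhibit geometric decrease; the substitution $\nullitymac{k}\le\min(\numroots,\monomialsleqdeg{k})$ handles this cleanly by reducing the whole estimate to ratios of binomial coefficients, at the cost of slack that is harmless because the remaining binomial factors already supply the needed geometric growth.
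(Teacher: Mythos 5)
Your proposal is correct and takes essentially the same approach as the paper: both arguments show that the per-iteration cost $\DBDStep{k}$ grows at least geometrically in $k$, with a ratio bounded away from $1$ coming from $\monomialsleqdeg{k+1}/\monomialsleqdeg{k} = 1 + \dimension/(k+1) \ge 1 + 1/(\degree-1)$ and the analogous bound for $\polysdeg{k}$, so the total cost is dominated by the final summand. One small imprecision: the SVD substep cost $\SVD(\nullitymac{k}+\monomialsdeg{k+1},\,\polysdeg{k+1})$ is not literally a product of $k$-independent terms and binomial factors as you assert, since it involves a sum and a $\min$; the argument still goes through because (as the paper notes explicitly) those pieces are nondecreasing in $k$ while $\polysdeg{k+1}$ supplies the geometric factor.
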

\begin{proof}
See Appendix \ref{appendix: degree-by-degree final step fixed deg}.
\end{proof}

\begin{lemma}
With fixed dimension, variable degree, a tight asymptotic bound of the degree-by-degree construction is $\dMac$ times the tight asymptotic bound of the final step.
\end{lemma}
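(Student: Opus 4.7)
The plan is to show that for fixed dimension $\dimension$ each per-iteration cost is a polynomial in the current degree $k$ of some degree $D$ depending only on $\dimension$, and then sum over the $\Theta(\dMac)$ iterations to pick up exactly one extra factor of $\dMac$ relative to the final step.

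To implement this, I would first verify from the binomial-coefficient formulas in Section~\ref{sec: temporal complexity: variable bounds} that for fixed $\dimension$ the quantities $\monomialsleqdeg{k}$, $\monomialsdeg{k}$, and $\polysdeg{k}$ are polynomials in $k$ of degrees $\dimension$, $\dimension-1$, and $\dimension$ respectively, and use Proposition~\ref{lem:koszul-nullity} to see that $\nullitymac{k}$ is likewise a polynomial in $k$ of degree at most $\dimension$ in the relevant range. Substituting these into the three iteration steps (two matrix multiplications and an SVD) yields a polynomial $p_{\dimension}(k)$ whose value at $k=\dMac-1$ is $\Theta(\dMac^{D})$, the tight asymptotic bound of the final step.

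Next I would evaluate $\sum_{k=\degree}^{\dMac-1} p_{\dimension}(k)$. The iteration count is $\dMac - \degree = (\dimension-1)(\degree-1) = \Theta(\dMac)$, so the standard estimate $\sum_{k=1}^{M} k^{D} = \Theta(M^{D+1})$ gives a total cost of $\Theta(\dMac^{D+1}) = \Theta(\dMac) \cdot \Theta(\dMac^{D})$, which is exactly the claim. The upper bound here is routine by monotonicity of each term in $k$; the lower bound requires showing that a positive fraction of iterations each contribute $\Theta(\dMac^D)$, which I would establish by restricting the sum to $k\ge \dMac/2$ and invoking the positive leading coefficient of $p_{\dimension}$.

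The main obstacle is checking that none of the $\min(\cdot,\cdot)$ terms inside the SVD complexity, nor the piecewise nature of $\nullitymac{k}$, degenerate inside the window $\dMac/2 \le k \le \dMac-1$. Proposition~\ref{lem:koszul-nullity} is exactly the tool needed to rule this out, since in that window all the binomial coefficients in its explicit formula are in their polynomial regime and the leading term in $k$ is $k^{\dimension}/\dimension!$, matching the expected scaling of the final step; this guarantees that the leading coefficient of $p_\dimension$ is strictly positive and so the restricted lower-bound argument goes through.
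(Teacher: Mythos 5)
Your overall strategy is the same as the paper's: upper-bound the total cost by $\dMac$ times the largest (final) step, and lower-bound it by restricting the sum to indices $k\ge\dMac/2$ and showing each such step costs a fixed fraction of the final step. This is exactly what the paper's Lemma~\ref{appendix: Sum Bound Lemma} formalizes, with the per-step ratio bounds supplied by Lemmas~\ref{appdendix: Nullity ratio bound}--\ref{appdendix: polysdeg ratio bound}.

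However, there is a genuine gap in how you control $\nullitymac{k}$ on the window $\dMac/2 \le k \le \dMac-1$. You claim Proposition~\ref{lem:koszul-nullity} guarantees that ``in that window all the binomial coefficients in its explicit formula are in their polynomial regime and the leading term in $k$ is $k^{\dimension}/\dimension!$.'' Both halves of this claim are false. First, the $j=n$ term $\binom{n + k - n\degree}{n}$ is in its polynomial regime only for $k \ge n\degree - n = \dMac - 1$, so for $k\approx\dMac/2\approx n\degree/2$ the nullity formula is still in a piecewise (non-polynomial) zone. Second, and more seriously, even once the polynomial regime is reached, the alternating sum collapses to the Hilbert polynomial, which Proposition~\ref{lem:koszul-nullity} itself shows is the \emph{constant} $r=\degree^n$ --- the leading $k^\dimension$ terms cancel, so the nullity does not grow like $k^\dimension/\dimension!$. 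Treating $\nullitymac{k}$ as if it has a positive degree-$\dimension$ leading coefficient would overcount and your ``polynomial $p_\dimension(k)$'' would be of the wrong degree. The paper sidesteps this entirely: Lemma~\ref{appdendix: Nullity ratio bound} proves $\nullitymac{k} \ge r/(2n!)$ for every $k\ge\degree$ using monotonicity of nullity in the degree and a direct evaluation at $k=\degree$, which is the correct replacement for your polynomial-leading-term argument and is what actually guarantees $\DBDStep{\halfFloor{\dMac}}$ is a constant fraction of $\DBDStep{\dMac}$. You would also want to verify that $\DBDStep{k}$ is nondecreasing in $k$ to justify your upper bound via the final term.
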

\begin{proof}
See Appendix \ref{appdendix: degree-by-degree final step fixed dim}.

\end{proof}

So for the final step where $k+1 = \dMac$ the complexity is
\begin{align*}
    & \nullitymac{\dMac - 1} \monomialsleqdeg{\dMac-1} T_\dMac \\ &+
    (\nullitymac{\dMac - 1}+\monomialsdeg{\dMac}) T_\dMac \min(\nullitymac{\dMac - 1}+\monomialsdeg{\dMac}, T_\dMac) \\ &+
    \numroots  \nullitymac{\dMac - 1} \monomialsleqdeg{\dMac-1}
\end{align*}
By Proposition~\ref{lem:koszul-nullity} we have $\nullitymac{\dMac-1} = \nullitymac{\dMac} = \degree^n$. 
Combined with the previous results, this gives the following bounds:
\begin{itemize}
    \item 
 For fixed $\dimension$ and $\degree\rightarrow\infty$, a tight asymptotic bound for the complexity of the degree-by-degree SVD method is
$O(\degree^{3\dimension}).$
\item For fixed $\degree$ and $\dimension\rightarrow\infty$, a tight asymptotic bound is
$O(n^{-1/2}\beta^{3n}\alpha_{\beta}^{3n}).$
\end{itemize}

\subsection{Complexity at low degree and dimension}
While the asymptotic bounds above give insight into the behavior of the algorithm when the degree $\beta$ or the  dimension $n$ is large, the temporal complexity of these methods and the sheer number of roots for large degrees and large dimensions mean that in practice the algorithm will only be used when both dimension and degree are relatively small. To compare performance at these more practical levels, we can directly calculate the number of floating point operations (FLOPs) of all the steps of the algorithm without much simplification. Comparing the FLOPs for the simple (no speedups) null space construction and the degree-by-degree null space construction gives a better sense of the savings we actually expect to see in practice.   

Of course, as dimension increases, the number of FLOPs increases exponentially for both variants. When dimension is fixed and degree varies, we see more interesting results, as shown in Figure~\ref{fig: dim_FLOPS}.

\begin{figure}
\centering
\includegraphics[width=.9\textwidth]{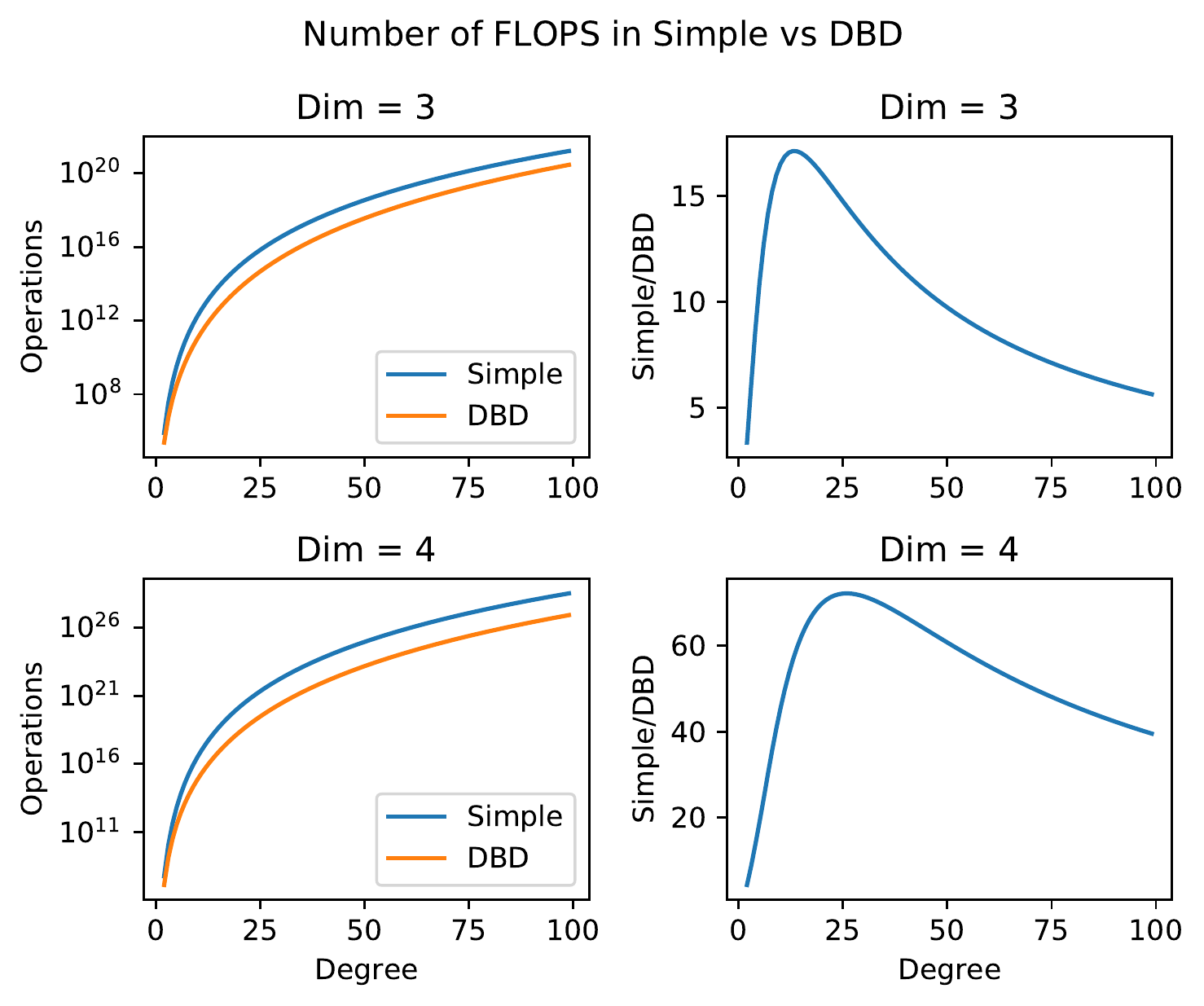}
\caption{Comparison of the FLOPs used with a simple (no speedups) null space reduction versus the degree-by-degree null space reduction.
The top and bottom rows show results in dimensions three and four, respectively. The panels on the left show the total number of FLOPs required for the two constructions. The panels on the right show the ratio of operations between  the simple construction and the  degree-by-degree construction. Notice that the peak, where the savings of degree-by-degree is most significant, moves to the right as dimension increases.}
\label{fig: dim_FLOPS}
\end{figure}

In dimension three the savings at degrees two and three are minimal, but by degree $10$ the difference is substantial. The biggest savings are roughly between degrees $10$ to $50$. As dimension increases, the range of degrees with the most savings moves upward. So although degree-by-degree requires fewer FLOPs than the simple construction, the amount of savings varies significantly with the degree and dimension.

\section{Numerical Stability}
\label{sec: Numerical Stability}

Unfortunately, the methods described above for polynomial rootfinding are unstable. This can be seen from the following quadratic system from \cite{Noferini}, which, following Townsend, we refer to as the \emph{devastating example}:

\[
\left(
\begin{array}{c}
    p_1(x_1,\ldots,x_n)\\
    \vdots \\ 
    p_n(x_1,\ldots,x_n)\\
\end{array}
\right)
=
\left(
\begin{array}{c}
    x_1^2\\
    \vdots\\
    x_n^2\\
\end{array}
\right)
+ \varepsilon Q
\left(
\begin{array}{c}
    x_1\\
    \vdots\\
    x_n\\
\end{array}
\right)
\]
where $Q$ is any unitary matrix and $\varepsilon> 0$ is small. 

Recall that the absolute condition number of a simple root $\mathbf z$ of $f:\mathbb{R}^n\mapsto\mathbb{R}^n$ is
    $$\kappa(\mathbf z,f) = \left\lVert{Df(\mathbf z)^{-1}}\right\rVert,$$
where $Df$ is the Jacobian of $f$ \cite[Proposition 14.1]{cucker},
and that the condition number of a simple eigenvalue $\lambda$ of matrix with left and right eigenvectors $\mathbf u$ and $\mathbf v$, respectively, is
    $$\kappa(\lambda,A) = \frac{\left\lVert{\mathbf u}\right\rVert \left\lVert{\mathbf v}\right\rVert}{|\mathbf u\hermitian\mathbf v|}$$
\cite[p.359]{Golub}.
At $\mathbf x^*=[0,\dots,0]\transpose$, the Jacobian of the devastating system is $J(\mathbf x^*)=\varepsilon Q$, so the condition number of the root at $\mathbf x^*$ is $\norm{J(\mathbf x^*)^{-1}}=\varepsilon^{-1}\norm{Q^{-1}}=\varepsilon^{-1}$.  However, as discussed below, the condition number of the corresponding eigenvalue of the M\"{o}ller--Stetter matrix computed using the SVD, QRP, or LQ methods described above is $\kappa(\lambda,M_{x_i}) = \Omega(\varepsilon^{-n})$, i.e., asymptotically of order at least $\varepsilon^{-n}$. This shows that the condition number of the eigenvalue may grow exponentially with dimension even though the condition number of the root is constant in dimension. If the algorithm were backwards stable, relative forward error would necessarily be $O(\kappa \epsilon_m)$ where $\epsilon_m$ is unit roundoff \cite[p. 111]{trefethen97}. This example displays forward error with the behavior of $O(\kappa^n \epsilon_m)$.

\newcommand{\cof}{\text{cof}}
\newcommand{\minor}{\text{minor}}
\newcommand{\SB}{{\mathbf B}}
\newcommand{\hatSB}{\mathcal B_{\text{QRP}}}
\newcommand{\hatBLQ}{\mathcal B_{\text{LQ}}}
\newcommand{\B}{\mathbf{B}_{\text{SVD}}}
\newcommand{\hatB}{\mathcal B_{\text{SVD}}}
We now discuss why the eigenvalue problem is this ill-conditioned. We begin by proving the form of the abstract eigenpolynomial of the operator $m_{x_j}:\quotientalgebra\rightarrow \quotientalgebra$.
\begin{lemma}
For $j=1,\ldots,n$, the eigenpolynomial associated with $\lambda = 0$ of the operator $m_{x_i}:\quotientalgebra\rightarrow \quotientalgebra$ is $$q = \sum_{\iota\subseteq \mathbf I} \det (\varepsilon Q_\iota) \prod_{k\in\iota} x_k$$
where $\mathbf I = \{1,\ldots,n\}$ denotes the set of possible row and column indices and $Q_\iota$ denotes the matrix formed by removing rows and columns in $\iota\subseteq \mathbf I$ from $Q$. 
\label{lemma: q is of this form}
\end{lemma}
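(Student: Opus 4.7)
The plan is to recognize the polynomial $q$ as the determinant of a single matrix over the polynomial ring, and then exploit the adjugate identity to show that $x_j q \in \I$ for every $j$.

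Concretely, let $M$ denote the $n \times n$ matrix over $\C[x_1,\ldots,x_n]$ defined by
\[
M = \mathrm{diag}(x_1,\ldots,x_n) + \varepsilon Q.
\]
Laplace/cofactor expansion of $\det M$ along the diagonal, grouped by the subset $\iota \subseteq \mathbf{I}$ of positions in which we pick the diagonal entry $x_k$, yields
\[
\det M \;=\; \sum_{\iota \subseteq \mathbf{I}} \Bigl(\prod_{k\in\iota} x_k\Bigr)\,\det\bigl((\varepsilon Q)_{\mathbf{I}\setminus\iota,\,\mathbf{I}\setminus\iota}\bigr),
\]
and the principal submatrix $(\varepsilon Q)_{\mathbf{I}\setminus\iota,\mathbf{I}\setminus\iota}$ is precisely $\varepsilon Q_\iota$ in the notation of the lemma.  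Hence $q = \det M$.

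The next step is to notice that the polynomials $p_i$ of the devastating example themselves arise from $M$.  Writing $\mathbf{x} = (x_1,\ldots,x_n)\trp$, the $i$th entry of $M\mathbf{x}$ is $x_i \cdot x_i + \varepsilon (Q\mathbf{x})_i = p_i$, so
\[
M \mathbf{x} = (p_1,\ldots,p_n)\trp.
\]
Applying the adjugate identity $\mathrm{adj}(M)\,M = (\det M)\,I = q\,I$, which is valid over any commutative ring and in particular over $\C[x_1,\ldots,x_n]$, gives
\[
q\,\mathbf{x} \;=\; \mathrm{adj}(M)\,(p_1,\ldots,p_n)\trp.
\]
Each coordinate $x_j q$ is therefore a polynomial combination of $p_1,\ldots,p_n$, hence lies in $\I$.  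This is exactly the statement that $m_{x_j}(q) = 0$ in $\quotientalgebra$ for every $j$, so $q$ is a right eigenvector of $m_{x_j}$ associated with the eigenvalue $\lambda = 0$.

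The only thing left to verify is that $q$ is nonzero as an element of $\quotientalgebra$, i.e., that $q\notin\I$; otherwise the ``eigenpolynomial'' would be trivial.  Since $\I$ is radical, it suffices to exhibit a root of the system at which $q$ does not vanish.  The origin is a root (each $p_i$ vanishes at $\mathbf{0}$), and
\[
q(\mathbf{0}) = \det(\varepsilon Q) = \varepsilon^n \det Q \neq 0
\]
because $Q$ is unitary.  The main conceptual step is spotting the determinantal form of $q$; once that is in hand, the rest is a one-line application of the adjugate identity, and I do not anticipate any real obstacle.
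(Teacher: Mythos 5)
Your proof is correct and takes the same two-step strategy as the paper (show $q\notin\I$ by evaluating at the origin, then show $x_jq\in\I$ for each $j$ via a cofactor identity), but you carry out the second step in a cleaner and more complete way than the paper does. The paper asserts the explicit identity
\[
x_iq = \sum_{j=1}^n p_j\Bigl(\sum_{\iota\subseteq \mathbf I \setminus\{i,j\}}\operatorname{cof}_{ji}(\varepsilon Q_\iota)\prod_{k\in\iota}x_k\Bigr)
\]
and dismisses the verification as ``straightforward to prove, though algebraically tedious.'' Your observation that $q=\det M$ for $M=\operatorname{diag}(x_1,\ldots,x_n)+\varepsilon Q$, together with $M\mathbf x=(p_1,\ldots,p_n)\trp$ and the adjugate identity $\operatorname{adj}(M)\,M=(\det M)I$, delivers $q\,\mathbf x=\operatorname{adj}(M)(p_1,\ldots,p_n)\trp$ in one stroke; reading off the $j$th coordinate and Laplace-expanding the cofactors of $M$ reproduces exactly the paper's formula. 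In effect you have supplied the proof the paper leaves to the reader, and in a more structural form: the tedious algebra is replaced by a ring-theoretic identity valid over $\C[x_1,\ldots,x_n]$. One small remark: your invocation of radicality of $\I$ is unnecessary for the direction you actually use --- $q\in\I$ already forces $q$ to vanish at every common root of the generators, with no Nullstellensatz needed --- and the paper's own proof does not appeal to it either. This is harmless but could be trimmed.
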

\begin{proof}
First we show that $q\not \equiv 0 \pmod {\I}$. We proceed by contradiction. If $q \equiv 0 \pmod {\I}$, then $q$ evaluates to zero at all of the common roots of the generators of $\I$. But $q(\mathbf 0) = \det(\varepsilon Q)\neq 0$.

We now show that for $i=1,\ldots,n$, $x_iq \equiv 0 \pmod {\I}$. To do so, we fix $i$ and claim that
$$x_iq = \sum_{j=1}^np_j\left(\sum_{\iota\subseteq \mathbf I \setminus\{i,j\}}\cof_{ji}(\varepsilon Q_\iota)\prod_{k\in\iota}x_k\right)$$ where $\cof_{ji}(\varepsilon Q_\iota)$ denotes the cofactor of $\varepsilon Q$ obtained by removing rows $\iota\cup\{j\}$ and columns $\iota\cup\{i\}$ from $Q$. This is straightforward to prove, though algebraically tedious.
\end{proof}

The methods discussed in this paper choose bases in which the representation of $q$ leads to an ill-conditioned eigenproblem. 
To see this, we need the following lemma.
\begin{lemma}
Let $\SB$ be the standard basis for $\mathbb C[x_1,\ldots,x_n;d-1]$ and let $\hatSB = \{\prod_{k\in\iota} x_k: \iota \subseteq\mathbf I\} \subseteq \SB$. For a coefficient of a monomial in $\hatSB$ to appear in a polynomial $s\in\I$, that coefficient must be $O(\varepsilon)$.
\label{lemma: small coeffs in bqrp}
\end{lemma}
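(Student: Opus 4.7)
The plan is to leverage the defining structure of the ideal $\I = (p_1,\dots,p_n)$ for the devastating example, where each generator splits as $p_i = x_i^2 + \varepsilon (Qx)_i$. Any $s\in\I$ admits a representation $s = \sum_{i=1}^n q_i p_i$ for some $q_i \in \mathbb C[x_1,\ldots,x_n]$, so substituting and distributing gives
\[
s \;=\; \sum_{i=1}^n q_i\, x_i^2 \;+\; \varepsilon\sum_{i=1}^n q_i (Qx)_i.
\]
The heart of the argument is to split contributions to the monomial expansion of $s$ according to which of these two summands produced them.

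First, I would observe that every monomial appearing in any term $q_i x_i^2$ has the variable $x_i$ occurring with exponent at least $2$, and is therefore not square-free, so it cannot lie in $\hatSB$. Consequently, no term in the first sum contributes to the coefficient of any monomial in $\hatSB$; such coefficients are determined entirely by the second sum. Because that second sum carries an explicit $\varepsilon$ out front, and its remaining factors are products of entries of $Q$ and coefficients of the $q_i$, any coefficient of a monomial in $\hatSB$ that appears in $s$ is forced to have the form $\varepsilon\cdot(\text{polynomial combination of entries of } Q \text{ and coefficients of the } q_i)$, and hence is $O(\varepsilon)$.

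The one subtlety that warrants care — and is the closest thing to an obstacle — is the implicit normalization of the $q_i$: the coefficients of the $q_i$ must be treated as $O(1)$ in $\varepsilon$, since otherwise one could trivially rescale $s$ and defeat the bound. With this standard convention in place, the exponent observation on $q_i x_i^2$ is the only structural input needed, and the remaining work is simply a clean bookkeeping separation of the two kinds of contributions.
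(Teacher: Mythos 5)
Your proof is correct and uses the same decomposition as the paper: write $s = \sum_i q_i p_i$, split into $\sum_i q_i x_i^2 + \varepsilon\sum_i q_i (Qx)_i$, and observe that the first sum contributes only non-square-free monomials. In fact you spell out explicitly the step the paper summarizes as ``the conclusion follows,'' and your remark about the implicit normalization of the $q_i$ is a fair observation about how the lemma's informal $O(\varepsilon)$ statement must be read in context (the $q_i$ coming from the rows of the Macaulay matrix are independent of $\varepsilon$).
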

\begin{proof}
Let $s\in\I$. Then there exist polynomials $s_1,\ldots,s_n\in\mathbb C[x_1,\ldots,x_n]$ such that $$s = \sum_{i=1}^n s_i\left(x_i^2 + \varepsilon \sum_{j=1}^n q_{ij}x_j\right) = \left(\sum_{i=1}^n s_i x_i^2\right) + \varepsilon \sum_{i=1}^n\sum_{j=1}^n q_{ij}s_i x_j.$$ The conclusion follows.
\end{proof}
For simplicity, we order $\hatSB$ so that the monomial $1$ is at the end, and order $\SB$ so that the monomials in $\hatSB$ appear last. We now show that QRP, SVD and LQ methods all give ill-conditioned eigenproblems.
\begin{theorem}
When $M_{x_i}$ is constructed using the direct Macaulay QRP method, $\kappa(0,M_{x_i}) \geq \varepsilon^{-n}$.
\end{theorem}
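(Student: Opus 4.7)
The strategy is to compute the left and right eigenvectors of $M_{x_i}$ associated with the eigenvalue $\lambda=0$ (which corresponds to the root $\mathbf{x}^*=\mathbf{0}$) and then substitute them into the condition-number formula $\kappa(\lambda,A)=\|\mathbf u\|\|\mathbf v\|/|\mathbf u\hermitian \mathbf v|$. The crux is identifying $\hatSB$ with the set $\{\prod_{k\in\iota}x_k:\iota\subseteq\mathbf I\}$ of squarefree monomials, after which the eigenpolynomial $q$ from Lemma~\ref{lemma: q is of this form} is already expressed in the basis and the condition number can be read off.

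First I would verify that $\hatSB$ coincides with the set of squarefree monomials. The rows of $\Mac_3$ obtained after the unpivoted QR on $\Mac_1$ correspond to polynomials of degree at most $\dMac-1$ lying in $\I$, so by Lemma~\ref{lemma: small coeffs in bqrp} every entry of $\Mac_3$ in a column indexed by a squarefree monomial has magnitude $O(\varepsilon)$, whereas the columns indexed by non-squarefree low-degree monomials inherit $\Omega(1)$ entries from the $x_i^2$ leading terms of the $p_i$. For $\varepsilon$ sufficiently small, the column-pivoted QR factorization of $\Mac_3$ therefore selects every non-squarefree column as a pivot, leaving exactly the $2^n=r$ squarefree monomials as the free columns that make up $\hatSB$.

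Next I would read off both eigenvectors. Using Equation~\eqref{eq:left eigenvectors of MS matrix} at $\mathbf z=\mathbf{0}$, the left eigenvector is $[b_\iota(\mathbf{0})]_\iota$; since every squarefree monomial of positive degree vanishes at $\mathbf{0}$, this collapses to $\mathbf u = \mathbf e_\emptyset$. Because the Jacobian $\varepsilon Q$ at $\mathbf{0}$ is invertible, the root $\mathbf{0}$ is simple, hence $\lambda=0$ is a simple eigenvalue and the kernel of $m_{x_i}$ is one-dimensional; by Lemma~\ref{lemma: q is of this form} it is spanned by the class of $q$. Since $q$ already lies in the span of $\hatSB$, the right eigenvector $\mathbf v$ is simply its coordinate vector, namely $v_\iota=\det(\varepsilon Q_\iota)=\varepsilon^{n-|\iota|}\det(Q_\iota)$ for each $\iota\subseteq\mathbf I$.

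To conclude, $\mathbf u\hermitian \mathbf v = v_\emptyset = \varepsilon^n \det(Q)$, whose modulus is $\varepsilon^n$ because $Q$ is unitary; also $\|\mathbf u\|=1$, and $\|\mathbf v\|\ge|v_{\mathbf I}|=|\det(\varepsilon Q_{\mathbf I})|=1$, using the convention that the determinant of the empty matrix is $1$. Substituting into the condition-number formula yields $\kappa(0,M_{x_i})\ge\varepsilon^{-n}$, as required. The main obstacle is making the basis-identification step rigorous: one must verify that no ``accidental'' $\varepsilon$-cancellation during the unpivoted QR of $\Mac_1$ pushes an entry of a squarefree column of $\Mac_3$ outside the $O(\varepsilon)$ regime, and that the non-squarefree columns retain $\Omega(1)$ norm throughout the column-pivoted factorization.
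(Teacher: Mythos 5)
Your proposal is correct and follows essentially the same route as the paper: invoke Lemma~\ref{lemma: small coeffs in bqrp} to show the pivoted QR must choose the squarefree monomials $\hatSB$ as the basis, then read off the left eigenvector at $\mathbf 0$ as the coordinate vector picking out the constant monomial, identify the right eigenvector as $[q]_{\hatSB}$ via Lemma~\ref{lemma: q is of this form}, and bound $\|\mathbf v\|$ from below by the modulus $1$ of the $x_1\cdots x_n$-coefficient while the overlap $|\mathbf u\hermitian\mathbf v|$ equals $\varepsilon^n$. Your remark at the end about ``accidental cancellation'' spoiling the $O(\varepsilon)$ bound on the squarefree columns of $\Mac_3$ is not actually a concern—each row of $\Mac_3$ corresponds to a polynomial of degree $\le \dMac-1$ in $\I$ (a purely algebraic fact, independent of how the QR is computed numerically), so Lemma~\ref{lemma: small coeffs in bqrp} applies unconditionally; the only step genuinely left at the ``straightforward to verify'' level, in both your argument and the paper's, is that the non-squarefree columns retain $\Omega(1)$ magnitude so that QRP prefers them as pivots.
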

\begin{proof}
By Lemma~\ref{lemma: small coeffs in bqrp}, the $\hatSB$ columns in $\Mac_3$ have entries that are $O(\varepsilon)$. It is straightforward to verify that QR with pivoting will never choose a $\hatSB$ column to be the pivot columns, so the QRP method will choose $\hatSB$ as the basis.
By (\ref{eq:left eigenvectors of MS matrix}), the left eigenvector associated with $\lambda=0$ is $\mathbf u = [0,\ldots,0,1]\transpose$, and the right eigenvector is $$\mathbf v = [q]_{\hatSB} = \begin{bmatrix}
1 \\  \varepsilon \det Q_{\{1,\ldots,n-1\}} \\ \vdots \\ \varepsilon^n
\end{bmatrix}.$$ It follows that
\begin{align*}
    \kappa(0,M_{x_i}) 
    = \frac{\left\lVert{\mathbf u}\right\rVert \left\lVert{\mathbf v}\right\rVert}{|\mathbf u\hermitian\mathbf v|}
    = \frac{\sqrt{1 + \ldots + \varepsilon^{2n}}}{\varepsilon^n}
    \geq \frac{1}{\varepsilon^n}.
\end{align*}
\end{proof}
\begin{theorem}
\label{theorem:svd lq condition numbers}
Using the direct Macaulay method with an SVD factorization results in an eigenvalue condition number of $\Omega(\varepsilon^{-n})$ if the SVD of $\Mac_3$ is computed via Golub-Kahan, LHC or three-step bidiagonalization followed by the Golub-Kahan diagonalization step.
Using an LQ factorization results in an eigenvalue condition number of $\Omega(\varepsilon^{-n})$ if the LQ factorization of $\Mac_3$ is computed via Householder QR.
\end{theorem}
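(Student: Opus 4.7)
The plan is to mirror the preceding (QRP) theorem's proof, but work intrinsically with the orthogonal projection $P := V_2 V_2^H$ onto the null space $\mathcal{N}$ of $\Mac_3$, so the argument applies uniformly to SVD and LQ. The crucial structural fact for the devastating example is that every polynomial $p_i$ has vanishing constant term, so every polynomial in $\I$ does, and therefore the column of $\Mac_3$ indexed by the constant monomial $1$ is identically zero. Letting $e_c$ denote the standard basis vector for that monomial, we have $\Mac_3 e_c = 0$, hence $e_c \in \mathcal{N}$ and $P e_c = e_c$ \emph{exactly}.

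Assuming that each named algorithm correctly computes its factorization so that $V_2$ is an orthonormal basis of the true null space $\mathcal{N}$, the left and right eigenvectors of $M_{x_i}$ at $\lambda = 0$ are $\mathbf{u}^H = [b_1(\mathbf{0}),\ldots,b_r(\mathbf{0})] = (V_2)_{c,:}$ and $\mathbf{v} = V_2^H \mathbf{w}_q$, where $\mathbf{w}_q$ is the $\SB$-coefficient vector of the eigenpolynomial $q$ from Lemma~\ref{lemma: q is of this form}. A direct calculation gives the intrinsic expressions
\[
\|\mathbf{u}\|^2 = P_{c,c}, \qquad \|\mathbf{v}\|^2 = \mathbf{w}_q^H P\, \mathbf{w}_q, \qquad \mathbf{u}^H \mathbf{v} = (P \mathbf{w}_q)_c,
\]
each depending only on the projection $P$.

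The key step is to apply $P e_c = e_c$ together with self-adjointness of $P$: we obtain $P_{c,c} = 1$ and
\[
(P \mathbf{w}_q)_c = (Pe_c)^H \mathbf{w}_q = (\mathbf{w}_q)_c = c_\emptyset = \det(\varepsilon Q) = \varepsilon^n \det Q,
\]
which is exact rather than asymptotic. For $\|\mathbf{v}\|^2$, split $\mathbf{w}_q = c_\emptyset\, e_c + \mathbf{w}_q^{\mathrm{rest}}$ with $\mathbf{w}_q^{\mathrm{rest}}$ supported on $\hatSB\setminus\{1\}$; using $Pe_c = e_c$ and the fact that the non-$\hatSB$ rows of $V_2$ are $O(\varepsilon)$ (so the $\hatSB$-block of $P$ acts as the identity on $e_c$ and as $I + O(\varepsilon^2)$ elsewhere), one gets $\|\mathbf{v}\|^2 \geq \|\mathbf{w}_q\|^2 - O(\varepsilon^2) \geq 1 - O(\varepsilon^2)$, since the $\iota = \mathbf{I}$ term of $\mathbf{w}_q$ alone contributes $|\det(\textup{empty matrix})|^2 = 1$ to $\|\mathbf{w}_q\|^2$. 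Substituting into the condition-number formula and using $|\det Q| = 1$ yields
\[
\kappa(0, M_{x_i}) = \frac{\|\mathbf{u}\|\|\mathbf{v}\|}{|\mathbf{u}^H \mathbf{v}|} \geq \frac{1 - O(\varepsilon^2)}{\varepsilon^n} = \Omega(\varepsilon^{-n}),
\]
handling both the SVD and the LQ cases simultaneously.

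The main obstacle is the verification that each named algorithm (Golub-Kahan, LHC, three-step bidiagonalization followed by Golub-Kahan for SVD, and Householder-based LQ) really does produce an orthonormal basis of the exact null space $\mathcal{N}$; this is a backward-stability statement about the sequences of Householder or Givens reflections used in each pipeline. The verification is straightforward because the small singular values of $\Mac_3$ are exactly zero---the nullity equals $r$ by Proposition~\ref{lem:koszul-nullity}---so there is a clean spectral gap between the null space and the range, and no ambiguity in identifying $\mathcal{N}$. The underlying structural fact $Pe_c = e_c$ is preserved by the QR step that produces $\Mac_3$, since that step only forms unitary row-combinations of the original Macaulay matrix, each of whose rows $x^{\mathbf{k}} p_i$ already has vanishing constant term.
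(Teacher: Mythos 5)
Your proof is correct in substance and achieves the theorem by essentially the same mechanism as the paper: identifying the constant-monomial coordinate $e_c$ as an exact element of the null space of $\Mac_3$ (since every row of the Macaulay matrix has no constant term) and tracking how this propagates to the left eigenvector, the right eigenvector, and their inner product. Where you genuinely diverge from the paper is in abstracting to the orthogonal projector $P$ onto the null space. The paper argues algorithm-by-algorithm that the computed $V$ has the block form $\begin{bmatrix}\hat V & 0 \\ 0 & 1\end{bmatrix}$ and then reads off $\mathbf u$ and the last entry of $\mathbf v$ from that explicit structure. You instead observe that all three quantities appearing in the eigenvalue condition number --- $\|\mathbf u\|^2 = P_{c,c}$, $\|\mathbf v\|^2 = \mathbf{w}_q^H P\,\mathbf{w}_q$, and $\mathbf u^H\mathbf v = (P\mathbf{w}_q)_c$ --- depend only on $P$, hence are the same for \emph{every} orthonormal basis of the null space. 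This collapses the SVD and LQ cases to a single argument, produces the exact identities $\|\mathbf u\| = 1$ and $|\mathbf u^H\mathbf v| = \varepsilon^n$, and reveals that the theorem's restriction to particular pipelines (Golub--Kahan, LHC, Householder LQ) is not load-bearing for the conditioning bound itself; the only issue those named algorithms must satisfy is that they return an orthonormal basis of the true null space, which you correctly flag as a separate backward-stability concern. That is a worthwhile simplification of the paper's argument.

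One step needs tightening, however: the lower bound on $\|\mathbf v\|^2$. You justify $P_{\hatSB,\hatSB} = I + O(\varepsilon^2)$ by asserting that ``the non-$\hatSB$ rows of $V_2$ are $O(\varepsilon)$,'' i.e., that the non-$\hatSB$ rows of the \emph{null-space} columns are small. Lemma~\ref{lemma: small coeffs in bqrp} does not give that directly: the lemma is a statement about polynomials \emph{in} the ideal, so it controls the range-space columns of $V$ (the paper's $\begin{bmatrix}V_1 \\ V_3\end{bmatrix}$), telling you that their $\hatSB$ rows (the block $V_3$) are $O(\varepsilon)$; it says nothing immediate about the null-space block. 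The clean fix is the paper's: partition $V = \begin{bmatrix}V_1 & V_2 \\ V_3 & V_4\end{bmatrix}$ with $\hatSB$ rows on the bottom and null-space columns on the right; row-orthonormality of $V$ gives
\[
P_{\hatSB,\hatSB} = V_4 V_4^H = I - V_3 V_3^H,
\]
and Lemma~\ref{lemma: small coeffs in bqrp} applied to the columns of $\begin{bmatrix}V_1 \\ V_3\end{bmatrix}$ gives $V_3 = O(\varepsilon)$. Then $\|\mathbf v\|^2 = \mathbf{w}_q^H P\,\mathbf{w}_q \ge \bigl(1 - O(\varepsilon^2)\bigr)\|\mathbf{w}_q\|^2 \ge 1 - O(\varepsilon^2)$ exactly as you want, since the $\iota = \mathbf I$ coefficient contributes $1$ to $\|\mathbf{w}_q\|^2$. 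With that substitution, your projector-based argument is both correct and slightly more streamlined than the published proof.
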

\begin{proof}
We prove the result for the SVD method. An analogous proof gives the same result for the LQ method. First observe that the last column of $\Mac_3$ (i.e. the column for $1\in \SB$) is all zero because each row in $\Mac$ is a monomial multiple of some $p_i$, and no $p_i$ has a constant term.
It is straightforward to see that performing the SVD using the standard methods on $\Mac_3$ will result in a $V$ of the form $$V = \begin{bmatrix}& & & 0 \\ & \hat V & & \vdots \\ & & & 0\\ 0 & \cdots & 0 & 1\end{bmatrix}.$$

The matrix $V$ represents the basis transition matrix, so the final column of $V$ being of the form $[0, \ldots, 0, 1]\transpose$ means that the last element in the basis $\hatB$ which the SVD method chooses for $\quotientalgebra$ includes the monomial $1$ as the last element. Since $V$ is unitary, no other element in $\hatB$ has a constant term, so by (\ref{eq:left eigenvectors of MS matrix}), $\mathbf u = [0, \ldots, 0, 1]\hermitian$. The last entry in $\mathbf v=[q]_{\hatB}$ is $\varepsilon^n$, the constant term in $q$. Thus 
\begin{align*}
    \kappa(0,M_{x_i}) 
    = \frac{\left\lVert{[q]_{\hatB}}\right\rVert}{\varepsilon^n},
\end{align*}
so it suffices to show that $\left\lVert{[q]_{\hatB}}\right\rVert= \Omega(1)$ as $\varepsilon\rightarrow0$.

Partition $V$ so that $$V = \begin{bmatrix}V_1 & V_2 \\V_3 & V_4 \end{bmatrix}$$ and $V_4$ is $r \times r$. If $\B$ is the basis for $\mathbb C[x_1,\ldots,x_n;d-1]$ represented by the columns of $V$, then $[q]_{\B} = V\hermitian[q]_{\SB}$. When considered as an element of the quotient algebra $\quotientalgebra$,
$$[q]_{\hatB} = \begin{bmatrix} 0 & I\end{bmatrix}\begin{bmatrix}V_1\hermitian & V_3\hermitian \\V_2\hermitian & V_4\hermitian \end{bmatrix}\begin{bmatrix}\mathbf 0 \\ [q]_{\hatSB}  \end{bmatrix} = V_4\hermitian [q]_{\hatSB}.$$

Consider the rows in $\begin{bmatrix}V_1\hermitian & V_3\hermitian\end{bmatrix}$. Each consists of the coefficients in the basis $\SB$ of a polynomial in $\I$. By Lemma~\ref{lemma: small coeffs in bqrp}, for a monomial in $\hatSB$ to appear with a nonzero coeffieint, that coefficient must scale with $\varepsilon$, so $V_3\hermitian = \varepsilon \tilde V_3\hermitian$ for some matrix $\tilde V_3$ that is independent of $\varepsilon$. 
Because $V$ is unitary, $I = \varepsilon^2 \tilde V_3 \tilde V_3\hermitian + V_4 V_4\hermitian$.
Therefore
\begin{align*}\left\lVert{[q]_{\hatB}}\right\rVert^2 
&= [q]_{\hatSB}\hermitian \left(I - \varepsilon^2 \tilde V_3\tilde V_3\hermitian \right) [q]_{\hatSB} \\
&= \left\lVert{[q]_{\hatSB}}\right\rVert^2 -\varepsilon^2 \left\lVert{\tilde V_3\hermitian[q]_{\hatSB}}\right\rVert^2  \\
&= \Omega(1)
\end{align*}
as desired.
\end{proof}

The above proofs can also be extended to show that SVD/LQ/QRP nullspace methods also result in poor conditioning for the devastating example. We do not present these proofs here, but they follow naturally from the same ideas about choosing orthonormal bases for $\quotientalgebra$ that include $1$ and whose orthogonal complement is in the ideal. 

The devastating example suggests that choosing non-orthogonal bases or preconditioning the Macaulay matrix could improve the performance of the method. For example, if one multiplied the $1$'s column in the Macaulay matrix by $\varepsilon^n$ and then divided the $1$'s column in $V$ by $\varepsilon^n$, the algorithm would effectively choose $\varepsilon^n$ instead of $1$ to be in $\hatB$. Then the right eigenvector becomes $\mathbf v = [q]_{\mathcal B} = [\ldots, 1]\transpose,$
and the condition number is $\kappa(\lambda,M_{x_j}) \approx \sqrt{2}$ regardless of dimension. Unfortunately, it is difficult to see exactly how to do this rescaling in a general way that avoids conditioning problems. 

\FloatBarrier

\section{Numerical Experiments}
\label{sec: Numerical Experiments}

\par We ran numerical experiments to compare the speed and accuracy of these methods and their variants on several different types of systems. The M\"{o}ller--Stetter methods presented in this paper appear to perform well in practice on most low-dimensional problems of relatively small degree. 

\subsection{QRP and SVD Direct Reduction Comparison}
\label{sec: SVD vs QRP}

\begin{figure}[tbhp]
\centering
\includegraphics[width=0.9\textwidth]{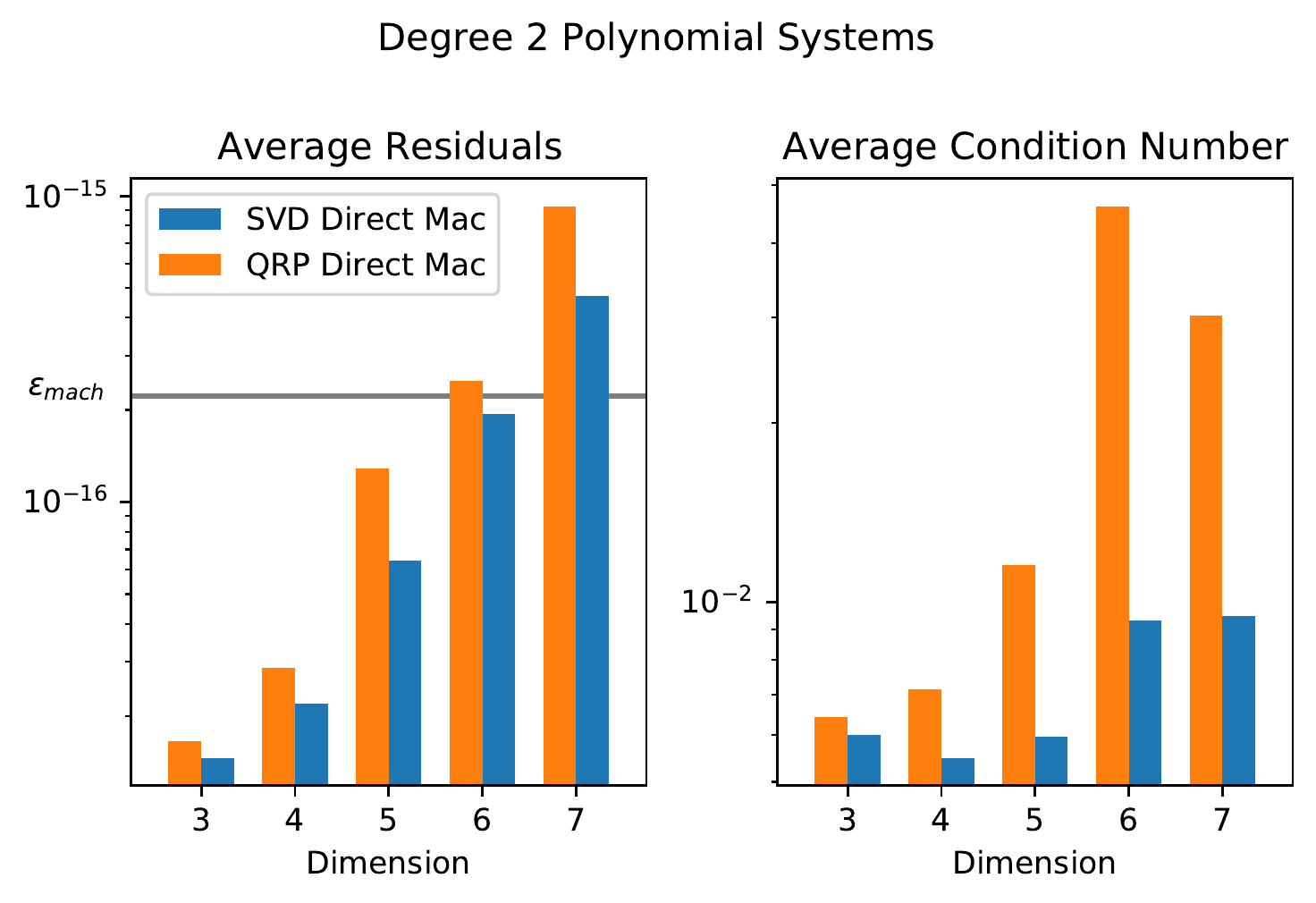}
\caption{Log-scale plots comparing  the Direct Macaulay QRP and SVD methods in terms of average residuals (left panel) and Macaulay condition numbers (right panel) for random, dense polynomials (in the power basis) of total degree $2$  over dimensions $3$ through $7$. Both the average residuals and the average eigenvalue condition number for the SVD method are smaller than for the QRP method. Similar improvements of SVD over QRP can be observed for a fixed dimension and varying degree.}
\label{fig:QRPvsSVD_res/eigs}
\end{figure}

\par We ran numerical experiments on random, dense polynomial systems with coefficients drawn from the standard normal distribution in both the power basis and Chebyshev basis of varying degree 
and dimension to compare the SVD method to the QRP method when reducing the Macaulay matrix directly (as opposed to using a null space method). We compared the average residuals of the roots, the average eigenvalue condition number, and computation time.

\par In general, we observed that systems solved using the SVD variant of direct Macaulay reduction had smaller average residuals and smaller condition numbers for eigenvalues than the same systems solved with the QRP method, for all degrees and dimensions. As the dimension increases, the improvements become more apparent (see Figure~\ref{fig:QRPvsSVD_res/eigs}). Surprisingly, the overall computation time was very similar between QRP and SVD although computing the SVD is generally more expensive.

\par In addition to random polynomial tests, we also compared the methods using several specific examples from Chebfun2's rootfinding test suite \cite{chebsuite}. Not all of the functions in the test suite are polynomials, so we ran the methods on high-degree Chebyshev polynomial interpolants. In many cases, the Macaulay matrix was too poorly conditioned for either method to work. This is not surprising since these systems are difficult by design to test the robustness of Chebfun2's numerical root finder, which utilizes subdivision to make subproblems that are more manageable. However, for the tests that were able to complete with these Chebyshev interpolants, the SVD method was faster than the QRP method. The maximum residuals for the SVD method were also better or the same for the QRP method most of the time.

\subsection{Null Space and Macaulay Method Comparison} 
\label{sec: compare methods}
Similar to the tests we ran above comparing the different methods to reduce the Macaulay matrix directly (as opposed to the null space), we ran experiments on random, dense polynomial systems in the power basis with coefficients drawn from the standard normal distribution of varying degree and dimension to compare the class of Macaulay null space reduction methods with the class of direct Macaulay reduction methods. We found that using the SVD variant of each method tends to yield the best results in terms of residuals with a similar trend as that apparent in Figure~\ref{fig:QRPvsSVD_res/eigs}.

\begin{figure}[tbhp]
\centering
\includegraphics[width=.9\textwidth]{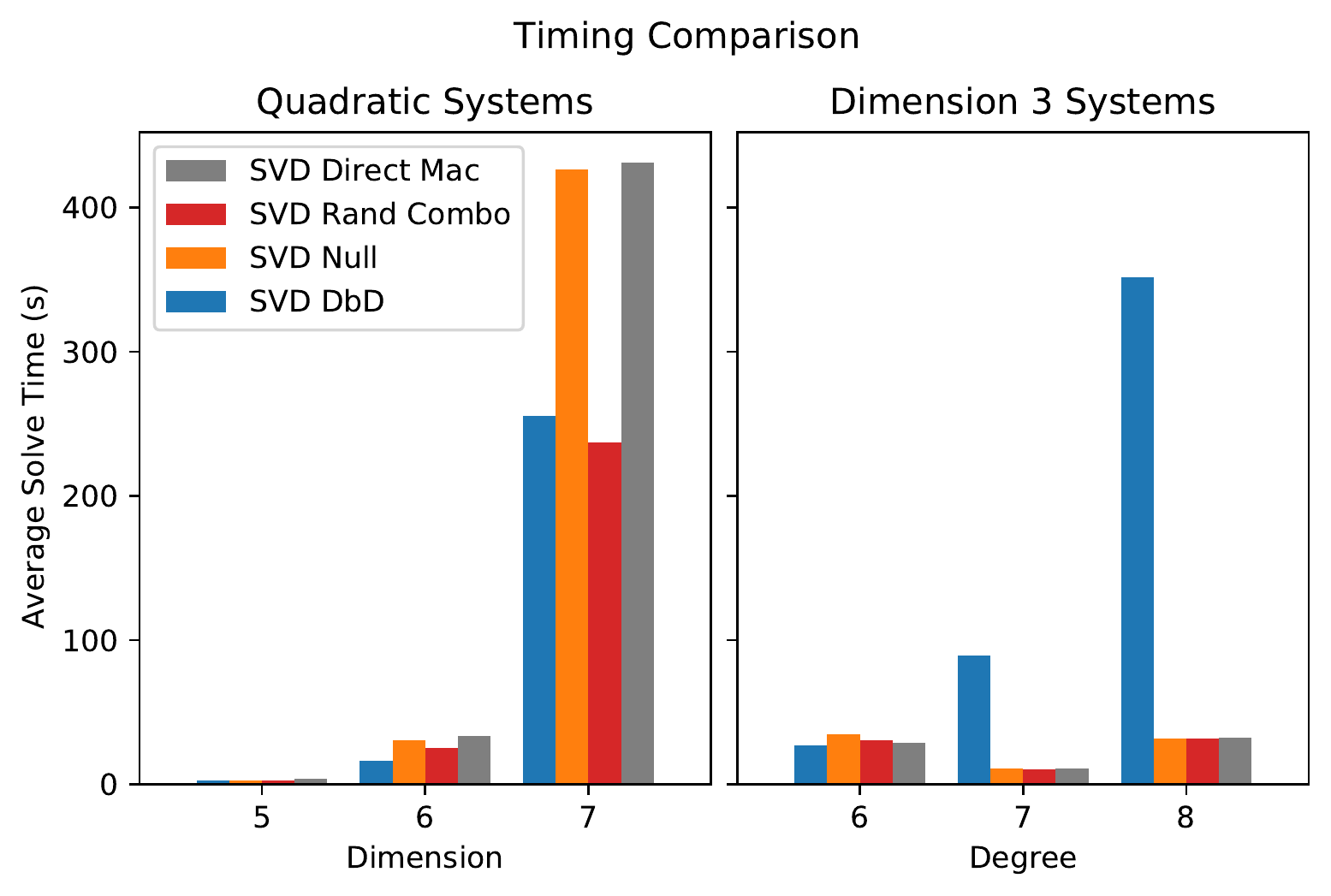}
\caption{Average solve time for random, dense, polynomial systems using the SVD variants of direct Macaulay, Macaulay null space reduction, and the degree-by-degree method. On the left, we have quadratic systems of varying dimension. On the right, we have Dimension 3 polynomial systems of varying degree. Although the QRP method generally performs faster, the SVD method gives more accurate results (see Figure~\ref{fig:QRPvsSVD_res/eigs}). The results for systems of dimensions and degrees not visible on these plots are solved quickly enough to make their inclusion unhelpful for comparison using a linear scale.}
\label{fig: SVD_comp_times_lin}
\end{figure}

\begin{figure}[tbhp]
\centering
\includegraphics[width=.9\textwidth]{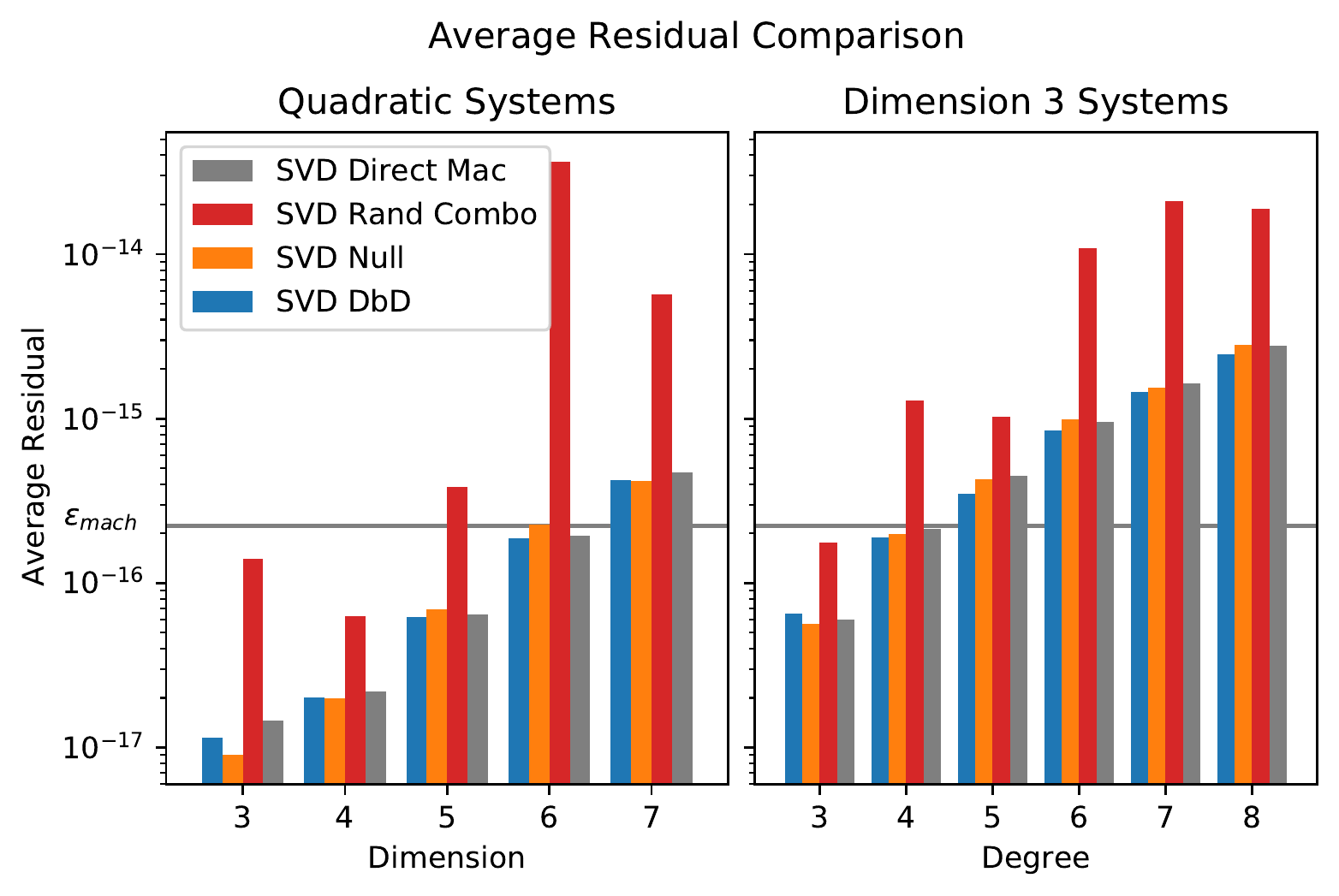}
\caption{Average residuals for random, dense, polynomial systems using the SVD variants of direct Macaulay, Macaulay null space reduction, and the degree-by-degree methods. On the left, we have quadratic systems of varying dimension. On the right, we have Dimension 3 polynomial systems of varying degree. Comparing the residuals of QRP yields a similar trend, but the SVD consistently outperforms QRP in terms of accuracy. }
\label{fig: SVD_res_comp_log}
\end{figure}

\par In Figure~\ref{fig: SVD_comp_times_lin}, one can see that the degree-by-degree method provides a significant speed advantage for low-degree systems in high dimensions, but for a fixed dimension, it becomes more computationally expensive as degree increases while only providing slightly better residuals (see Figure~\ref{fig: SVD_res_comp_log}). This is surprising given that in Section~\ref{sec: temporal complexity: degree-by-degree} we computed the asymptotic temporal complexity of the degree-by-degree method to be $\degree^2$ cheaper than the direct Macaulay reduction for fixed $\dimension$. Additionally, we see that using the random combinations method with the direct Macaulay reduction causes the residuals to be worse by a factor of 10 to 100. The speedup gained using random combinations as dimension increases does not make it much faster than the degree-by-degree method. As degree increases for a fixed dimension, it appears that random combinations provides an insignificant speed boost while giving worse residuals. This is seems to agree with the temporal complexity computed in Section~\ref{sec: temporal complexity: rand combos}, where the ratio of the asymptotic complexity of direct Macaulay reduction and the asymptotic complexity of random combinations is a factor of dimension alone.

\par Overall, the results indicate that for low-degree, high-dimensional problems, the degree-by-degree method is the fastest while also being among the most accurate  (see Figures~\ref{fig: SVD_comp_times_lin} and~\ref{fig: SVD_res_comp_log}). For high-degree, low-dimensional problems, it appears that the SVD null space method and the SVD direct Macaulay method presented in Section~\ref{sec: Direct Macaulay Reduction} performed with similar speed and accuracy, which matches our intuition given that their temporal complexities are shown to be equal in sections \ref{sec: temporal complexity: Direct Macaulay} and \ref{sec: temporal complexity: null space}. See Figure~\ref{fig: SVD_comp_times_lin}.

\FloatBarrier

\subsection{The Devastating Example}
To explore the frequency of behavior like the devastating example, we define the \textit{conditioning ratio} 
for a M\"{o}ller--Stetter eigenproblem. This definition is inspired by Trefethen and Bau's analysis of the stability of Gaussian elimination \cite[p. 164]{trefethen97}. 
We define the conditioning ratio for a M\"oller--Stetter eigenproblem for an eigenvalue $\lambda$ corresponding to a root $z$ to be
    $$CR(\lambda,z,f,M_g) = \frac{\kappa(\lambda,M_g)}{\kappa(z,f)}.$$
In practice, we use the method of \cite{VanLoan} to compute the eigenvalue condition number.
The base-10 logarithm of the conditioning ratio measures how many additional digits of precision may be lost when converting the root-finding problem into an eigenproblem. We also define the \textit{growth rate} of the conditioning ratios of a family of problems to be the value $g$ such that the conditioning ratio is approximately $C(1+g)^n$ for some constant $C$. The growth rate can be numerically estimated via $g = b^s - 1$ where $s$ is the slope of the line of best fit to the base-$b$ logarithm of computed conditioning ratios. The conditioning ratio of the devastating example is $\Omega(\varepsilon^{1-n})$ with a growth rate of $g = \varepsilon^{-1}-1$, and numerical computation is consistent with these theoretical values. In particular, Figure~\ref{fig: Growth Factors Dev Rand} shows that the slope of the base-10 log of the conditioning ratios as dimension increases matches the theoretical slope of $-\log_{10} \varepsilon$. 

As seen in Figure~\ref{fig: Growth Factors Dev Rand}, random polynomials behave much better than the devastating example, even when $\varepsilon$ is relatively large (e.g. $10^{-1}$), which corresponds to a ``not-so-bad" devastating example. Although the conditioning ratio still appears to grow exponentially with dimension, the slope of the line of best fit shows that the growth is much slower. This suggests that, in many cases, the M\"{o}ller--Stetter methods can still give accurate results in low-dimension problems despite being numerically unstable on some special examples.

\begin{figure}[tbhp]
\centering
\includegraphics[width=0.9\textwidth]{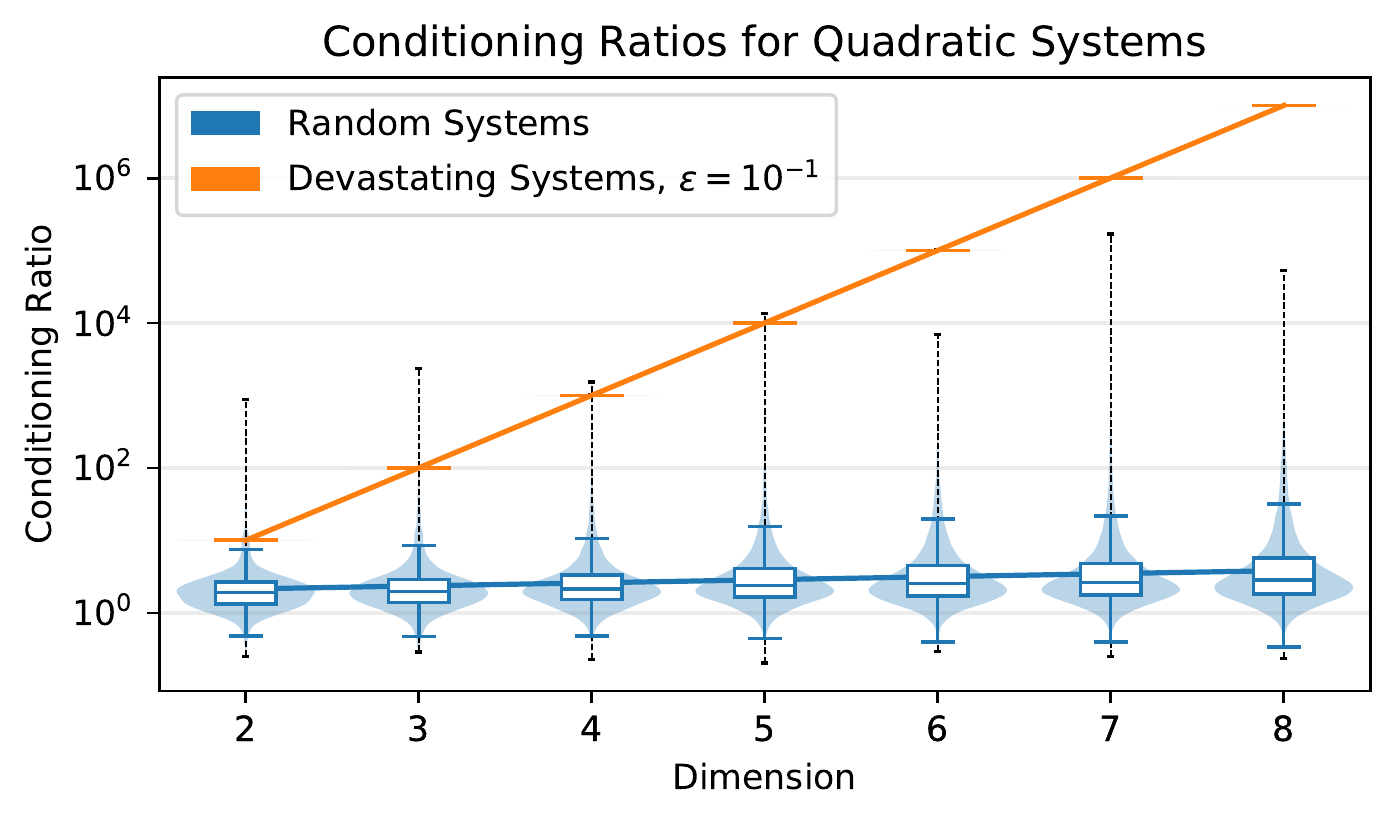}
\caption{Numerically calculated conditioning ratios for devastating and random quadratic systems solved using the direct Macaulay SVD method. The conditioning ratios of random systems show very slow exponential growth in dimension compared to the devastating example. Orange: Line of best fit for conditioning ratios of $n$ dimensional devastating systems with a randomly chosen $Q$. All of the computed conditioning ratios were within $0.015\%$ of the theoretical value, and the computed growth rate was $9.001$. Blue: Random systems of quadratic polynomials with coefficients drawn from the standard normal distribution. The violin and box plots show the distributions of the conditioning ratios of theses systems. The dotted black lines represent the tail ends of these distributions out to the most extreme observed conditioning ratios. The line of best fit to the base-10 logarithm of the conditioning ratios is also shown, with a growth rate of $g \approx 0.102$.}
\label{fig: Growth Factors Dev Rand}
\end{figure}

Perturbation of the devastating example seems to slow the exponential increase in conditioning ratio. To explore this numerically, we perturb devastating systems by adding a random quadratic polynomial with coefficients drawn from a normal distribution with standard deviation $\delta$. As seen in Figure~\ref{fig: Growth Factors Perturb}, larger perturbations correspond to slower exponential growth in conditioning ratio. This behavior occurs because perturbation of the problem creates a dense system, which opens up more choices for the basis of $\quotientalgebra$, and experimentally many of these newly available bases correspond to better conditioned eigenproblems.

\begin{figure}[tbhp]
\centering
\includegraphics[width=0.9\textwidth]{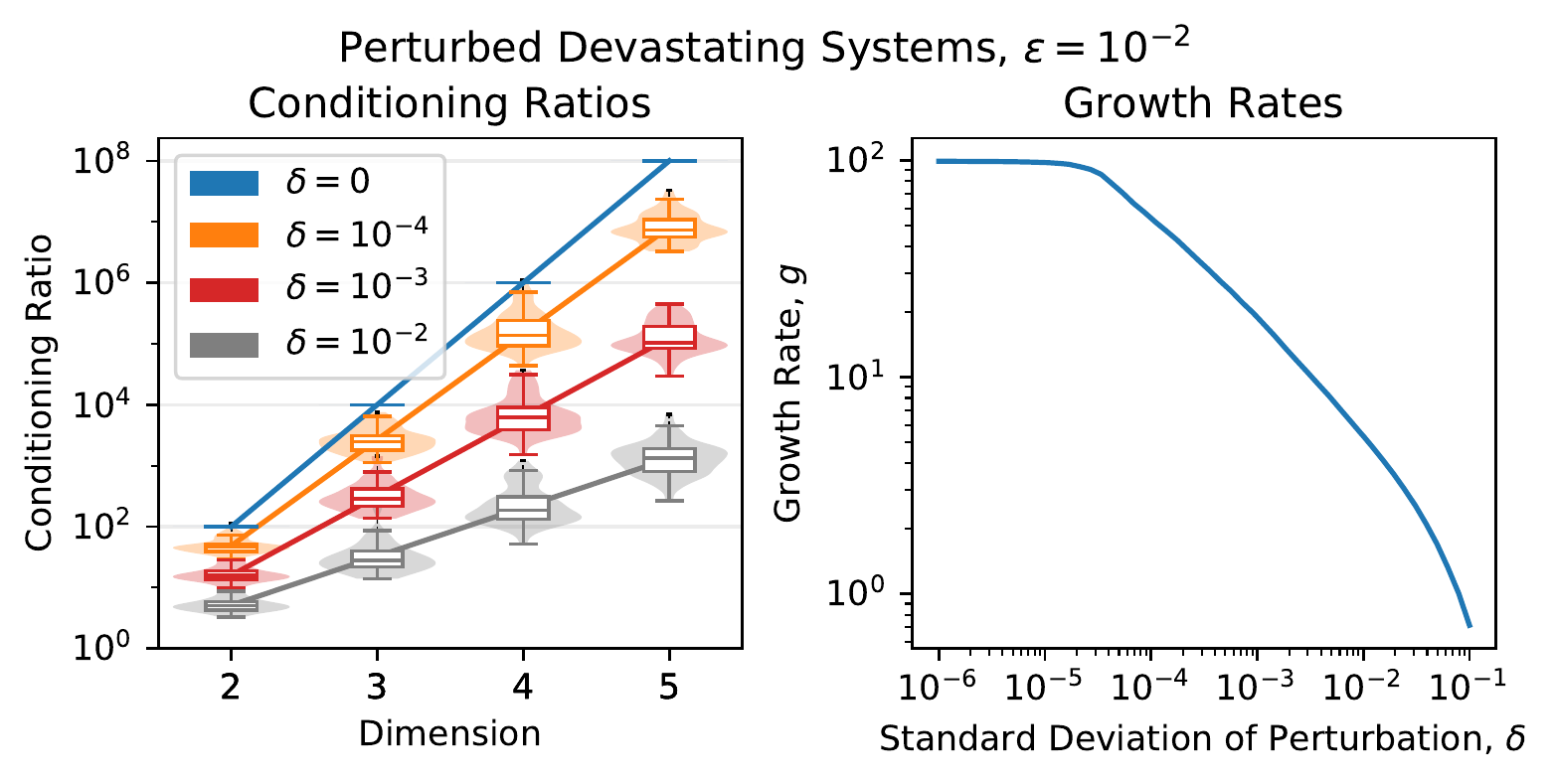}
\caption{Numerically calculated conditioning ratios for devastating systems with $\varepsilon = 10^{-2}$ that are perturbed by adding a value drawn from $\mathcal N(0,\delta^2)$ to each coefficient. The growth rate decreases with larger perturbations. For $\delta = 0,10^{-4},10^{-3},10^{-2}$, the computed growths rates are $g \approx 99.000,54.081,18.886$ and $5.341$, respectively (left). The growth rate decreases for $\delta > 10^{-5}$ (right).}
\label{fig: Growth Factors Perturb}
\end{figure}

The devastating example is close to a system with a very high multiplicity root. All of the roots of the system scale linearly with $\varepsilon$, so when $\varepsilon=0$, there is a root of order $2^n$. To better explore the behavior of M\"{o}ller--Stetter methods when roots are almost high multiplicity, we generate random systems of special quadratic polynomials for which it is easy to control the location of some of the roots. We then examine the behavior of the conditioning ratios when those roots are forced to be close together. In particular, we consider systems where each polynomial is of the form
$$f(\mathbf x) = 1 - \sum_{j=1}^n a_j(x_j - c_j)^2.$$
In two dimensions, this results in zero loci that are either hyperbolas or ellipses with axes along the coordinate directions. To force $\root_1,\ldots,\root_n$ to be roots with $\root_i = (r_{i1},\ldots,r_{in})$ for all $i$, one can simply choose a center $\mathbf c = (c_1,\ldots,c_n)$ for the generalized conic and solve the linear system 
$$\begin{bmatrix}
(r_{11} - c_1)^2       &   (r_{12} - c_2)^2       & \dots     &   (r_{1n} - c_n)^2       \\
(r_{21} - c_1)^2       &   (r_{22} - c_2)^2       & \dots     &   (r_{2n} - c_n)^2       \\
\vdots  &  \vdots   &   \vdots  &   \vdots  \\   
(r_{n1} - c_1)^2       &   (r_{n2} - c_2)^2       & \dots     &   (r_{nn} - c_n)^2       \\
\end{bmatrix}
\begin{bmatrix}
a_1    \\
a_2     \\
\vdots  \\
a_n     \\
\end{bmatrix}=
\begin{bmatrix}
1    \\
1     \\
\vdots  \\
1     \\
\end{bmatrix}.$$
Repeating this process with $n$ different centers gives $n$ quadratics that share roots at $\root_1,\ldots,\root_n$. 
When $\root_1,\ldots,\root_n$ are forced to be slight perturbations of each other, the conditioning ratios increase rapidly as more and more roots are forced to be close together, as shown in Figure~\ref{fig: Growth Factors Mutliplicity}. It appears that having many nearby roots affects the eigenvalue condition number much more than the root condition number; see Figure~\ref{fig: Growth Factors Eig v Root Cond}. This suggests that at least part of what makes the devastating example problematic for M\"{o}ller--Stetter methods is that many roots are close together. 

In short, although the M\"{o}ller--Stetter methods are known to be unstable in some cases, in practice they seem to give accurate answers for low-dimensional, sufficiently well-behaved problems. Issues can arise when roots are nearly high multiplicity, but double roots do not seem to be particularly problematic. These challenges are unlikely to occur as long as the roots are sufficiently separated. 

\begin{figure}[tbhp]
\centering
\includegraphics[width=0.9\textwidth]{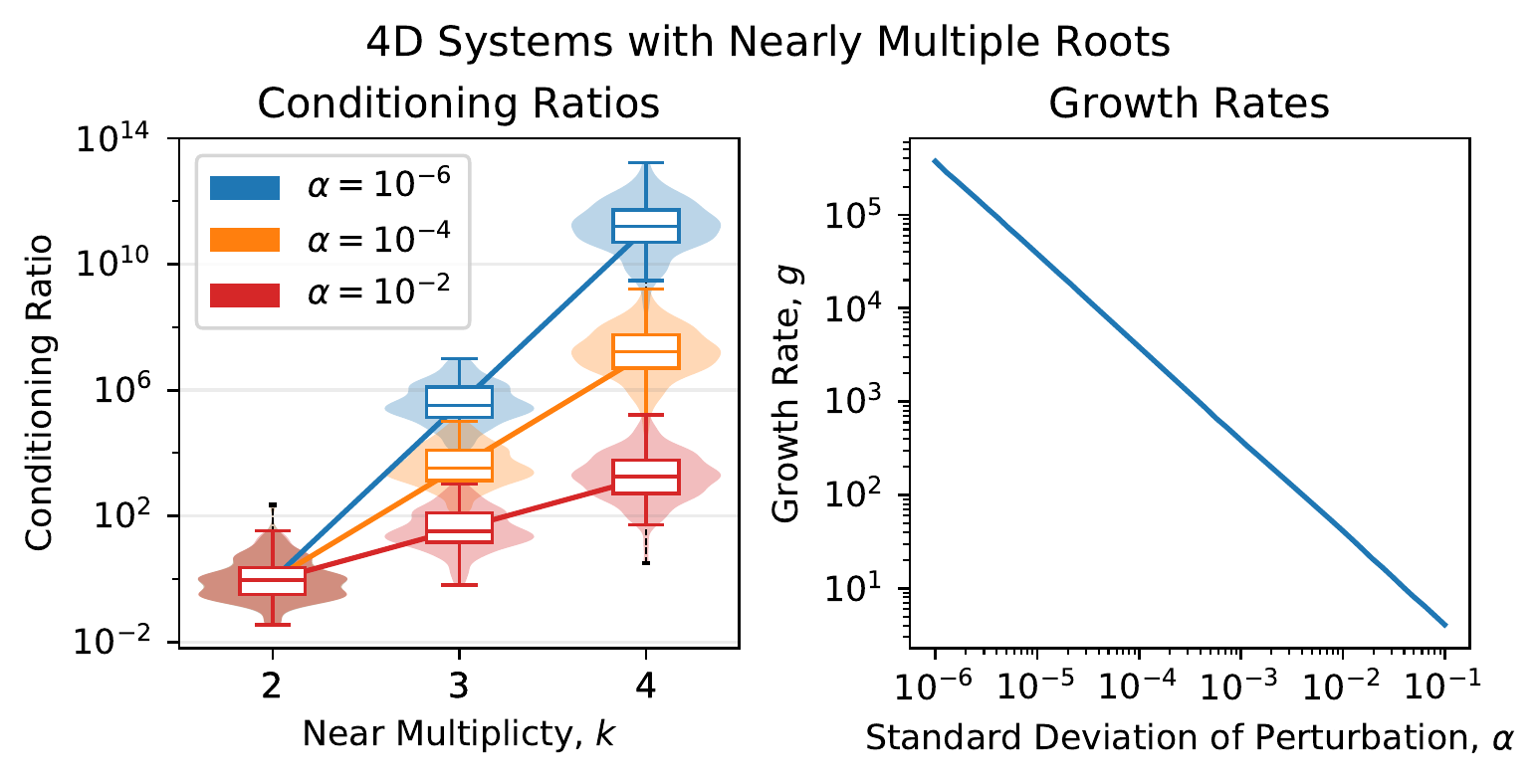}
\caption{Like the devastating example, which nearly has a multiplicity $2^n$ root at the origin, four dimensional systems that nearly have a multiplicity $k$ root also have poor conditioning ratios. The conditioning ratios appear to grow exponentially with $k$. Systems were generated to have a randomly chosen primary root $\root_1$, and nearby roots $\root_2,\ldots,\root_k$ that are random perturbations of $\root_1$ in every coordinate direction by values drawn from $\mathcal N(0,\alpha^2)$. Thus, $\root_2,\ldots,\root_k$ scale approximately linearly towards the primary root with $\alpha$. The growth rate (now in $k$, rather than $n$) depends heavily on $\alpha$.} 
\label{fig: Growth Factors Mutliplicity}
\end{figure}

\begin{figure}[tbhp]
\centering
\includegraphics[width=0.9\textwidth]{}
\caption{In four dimensional systems that nearly have a multiple root, the condition number of the eigenvalue increases much more rapidly than the condition number of the root as number $k$ of nearby roots increases. This effect is even more dramatic as the distance between roots decreases. Here, this distance is $O(\alpha)$.}
\label{fig: Growth Factors Eig v Root Cond}
\end{figure}

\FloatBarrier

\section{Conclusion and Future Work}
\par The improvements to the QRP method outlined in this paper were originally motivated by the idea to create an algorithm to find the real roots of a multidimensional system of arbitrary smooth functions. Working in the Chebyshev basis not only gives better conditioning of the real roots, but it also allows for this method to be seamlessly integrated into a Chebyshev proxy method as described in \cite{BoydBook}. Such a method consists of subdividing an original search domain into smaller subdomains until all of the functions can be accurately approximated with low-degree Chebyshev polynomials and then uses another method, like the ones described in this paper, to find the roots of the resulting systems of Chebyshev polynomials.

\par We plan to address under what conditions this solver
performs optimally in such a Chebyshev proxy method in a future paper. We are particularly interested in how well it performs when compared to other solvers (such as the B\'{e}zout Resultant, which the MATLAB package Chebfun uses in 2 dimensions) and its potential to operate in dimensions as high as 5 or 6.

\par It would also be interesting to explore methods to optimally precondition the Macaulay matrix and rescale basis elements to avoid problems like the devastating example.

\FloatBarrier
\clearpage

\pagebreak

\appendix

\section{Temporal Complexity Proofs}
\label{appendix: complexity proofs}

\par In this section, we provide the proofs for the lemmas found in Section~\ref{sec: temporal complexity: variable bounds}, which are concisely summarized in table~\ref{table: complexity bounds}.

\subsection{Fixed Dimension, Varying Degree}
\label{appendix: Fixed dim}
\par This section's lemmas are for the situation where the dimension $n$ is fixed and the degree $\beta$ goes to infinity.

\begin{lemma} 
\label{appendix: monomialsleqdeg dMac fixed dim}
With fixed dimension, variable degree, the function $\degree^\dimension$ is a tight asymptotic bound for $\monomialsleqdeg{\dMac-1}$, $\monomialsleqdeg{\dMac}$, and $\polysdeg{\dMac}$. The function $\degree^{\dimension-1}$ is a tight asymptotic bound for $\monomialsdeg{\dMac}$.
\end{lemma}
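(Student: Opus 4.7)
The plan is direct calculation using the fact that for any fixed $n$, the binomial coefficient $\binom{N}{n}$ is a degree-$n$ polynomial in $N$ with leading term $N^n/n!$, so as $N \to \infty$ we have $\binom{N}{n} \sim N^n/n!$. Since each quantity in the lemma is a binomial coefficient (or a fixed multiple of one) whose upper argument is a linear function of $\beta$, this is essentially a substitution exercise.

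First, I would substitute $\dMac = n\beta - n + 1$ into the closed-form expressions in the definitions from Section~\ref{sec: temporal complexity}. Concretely, $\monomialsleqdeg{\dMac} = \binom{n+\dMac}{n} = \binom{n\beta+1}{n}$ and $\monomialsleqdeg{\dMac-1} = \binom{n\beta}{n}$. For the degree-exactly-$\dMac$ count, I use $\monomialsdeg{\dMac} = \binom{n+\dMac-1}{n-1} = \binom{n\beta}{n-1}$. For the row count at the top degree, $\polysdeg{\dMac} = n\,\monomialsleqdeg{\dMac-\beta} = n\binom{(n-1)\beta+1}{n}$.

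Next, applying $\binom{N}{k} \sim N^k/k!$ as $N \to \infty$ with $k$ fixed, I obtain the leading-order asymptotics
\begin{align*}
\monomialsleqdeg{\dMac} &\sim \frac{(n\beta)^n}{n!} = \frac{n^n}{n!}\beta^n,\\
\monomialsleqdeg{\dMac-1} &\sim \frac{(n\beta)^n}{n!} = \frac{n^n}{n!}\beta^n,\\
\monomialsdeg{\dMac} &\sim \frac{(n\beta)^{n-1}}{(n-1)!} = \frac{n^{n-1}}{(n-1)!}\beta^{n-1},\\
\polysdeg{\dMac} &\sim n\cdot\frac{((n-1)\beta)^n}{n!} = \frac{(n-1)^n}{(n-1)!}\beta^n.
\end{align*}
Each of these leading-coefficient constants depends only on $n$ and is strictly positive (using $n \ge 2$, so $n-1 \ge 1$), which is exactly the statement that $\beta^n$ (respectively $\beta^{n-1}$) is a tight asymptotic bound in the sense of the definition given in Section~\ref{sec: temporal complexity}: the ratio $f(\beta,n)/g(\beta,n)$ tends to a positive finite constant as $\beta \to \infty$.

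Since the limits exist and are strictly positive constants, the $\liminf$ and $\limsup$ conditions in the definition of tight asymptotic bound are automatically satisfied. The only place any care is needed is in justifying the substitution $\monomialsleqdeg{\dMac-\beta} = \binom{(n-1)\beta+1}{n}$ (which requires $\dMac - \beta \ge 0$, i.e.\ $\beta \ge n/(n-1)$, true for all sufficiently large $\beta$), and in noting that the leading coefficient $(n-1)^n/(n-1)!$ does not vanish, which uses the standing assumption $n > 1$. No genuine obstacle arises; the entire lemma is a routine asymptotic expansion of binomial coefficients, and the subsequent lemmas for the fixed-$\beta$, $n \to \infty$ case in the appendix will require the more delicate Stirling-based estimates where $\alpha_\beta$ appears.
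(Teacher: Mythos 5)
Your proposal is correct and takes essentially the same approach as the paper: both reduce each quantity to a binomial coefficient with upper argument linear in $\beta$ and fixed lower argument ($n$ or $n-1$), then extract the leading-order behavior $\binom{N}{k}\sim N^k/k!$, yielding the same positive leading constants $\dimension^\dimension/\dimension!$, $\dimension^{\dimension-1}/(\dimension-1)!$, and $(\dimension-1)^\dimension/(\dimension-1)!$. The paper simply writes out the finite product $\prod_{j=1}^n\frac{n\beta-n+j}{\beta}$ and takes the limit directly rather than invoking the general asymptotic, but this is the same computation.
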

\begin{proof}
By the definitions of $\monomialsleqdeg{k}$ and $d$, we have
$\monomialsleqdeg{\dMac-1} 
= \binom{\dimension\degree}{\dimension\degree-\dimension}.
$
Therefore
$$
\degreelim \degree^{-\dimension}\monomialsleqdeg{\dMac-1}
= \frac{1}{\dimension!} \degreelim \prod_{j=1}^\dimension \frac{\dimension\degree-\dimension + j}{\degree} \\
= \frac{\dimension^\dimension}{\dimension!}
.$$
Since $\dimension$ is assumed to be constant, this implies that $\monomialsleqdeg{\dMac-1}\sim \frac{\dimension^\dimension}{\dimension!}\degree^\dimension$ as $\degree\rightarrow\infty$.
Similar computations give
$
\degreelim \degree^{-\dimension}\monomialsleqdeg{\dMac}
=
\degreelim \degree^{1-\dimension}\monomialsdeg{\dMac}
= \frac{\dimension^{\dimension}}{\dimension!}
$
and
$\degreelim \degree^{-\dimension}\polysdeg{\dMac}
= \frac{(\dimension-1)^{\dimension}}{(\dimension-1)!}$.
The result follows.
\end{proof}

\begin{lemma} 
\label{appendix: Sum Bound Lemma}
Let $A_i(\degree)$ be on nondecreasing, positive sequence dependent on $\degree$ for $0 \leq i \leq K(\degree)$, where $K:\mathbb N \rightarrow \mathbb N$. If there is some $C > 0$ such that $A_{\halfFloor{K(\degree)}}(\degree) \geq C A_{K(\degree)}(\degree)$ for all $\degree$, then $\left(K(\degree)+1\right)A_{K(\degree)}(\degree)$ is a tight asymptotic bound for $\sum_{i=0}^{K(\degree)} A_i(\degree)$ for fixed $n$ as $\degree\rightarrow\infty$.
\end{lemma}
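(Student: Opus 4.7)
The plan is to establish the upper and lower bounds on the sum separately, using only the monotonicity of $A_i$ and the doubling-type hypothesis $A_{\lfloor K(\beta)/2\rfloor}(\beta) \geq C A_{K(\beta)}(\beta)$.

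For the upper bound, since $A_i(\beta)$ is nondecreasing in $i$, every term in the sum is at most $A_{K(\beta)}(\beta)$. Therefore
\[
\sum_{i=0}^{K(\beta)} A_i(\beta) \;\leq\; (K(\beta)+1)\,A_{K(\beta)}(\beta),
\]
which gives the $O$-bound immediately and requires no further argument.

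For the lower bound, I would restrict the sum to its upper half $i \in \{\lfloor K(\beta)/2\rfloor, \ldots, K(\beta)\}$, which consists of $K(\beta) - \lfloor K(\beta)/2\rfloor + 1 = \lceil K(\beta)/2\rceil + 1$ terms. By monotonicity, each such term is at least $A_{\lfloor K(\beta)/2\rfloor}(\beta)$, and by hypothesis this is at least $C\,A_{K(\beta)}(\beta)$. Hence
\[
\sum_{i=0}^{K(\beta)} A_i(\beta) \;\geq\; \bigl(\lceil K(\beta)/2\rceil + 1\bigr)\cdot C\, A_{K(\beta)}(\beta) \;\geq\; \tfrac{C}{2}(K(\beta)+1)\,A_{K(\beta)}(\beta),
\]
which gives the matching $\Omega$-bound.

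There is no genuine obstacle here; the only thing to be careful about is handling the small cases where $K(\beta)$ is small (e.g. $K(\beta) = 0$ or $1$), which the inequality $\lceil K/2\rceil + 1 \geq (K+1)/2$ covers uniformly. Combining the two bounds yields the tight asymptotic bound $(K(\beta)+1)A_{K(\beta)}(\beta)$ as $\beta \to \infty$, as claimed.
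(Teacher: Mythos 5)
Your proof is correct and follows essentially the same strategy as the paper's: bound the sum above by $(K(\degree)+1)A_{K(\degree)}(\degree)$ using monotonicity, and bound it below by restricting to the upper half of the index range and applying the hypothesis $A_{\halfFloor{K(\degree)}}(\degree) \geq C A_{K(\degree)}(\degree)$. The only cosmetic difference is that you start the lower-half sum at $\halfFloor{K(\degree)}$ rather than $\halfCeil{K(\degree)}$, yielding the marginally sharper constant $C/2$ instead of $C/4$.
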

\begin{proof}
Observe that
\begin{align*}
1 \geq 
\frac{\sum_{i=0}^{K(\degree)} A_i(\degree)}{\left(K(\degree)+1\right)A_{K(\degree)}(\degree)}
&\geq
\frac{\sum_{i=\halfCeil{K(\degree)}}^{K(\degree)} A_i(\degree)}{2K(\degree)A_{K(\degree)}(\degree)} 
\\ &\geq
\frac{\frac{K(\degree)}{2} A_{\halfCeil{K(\degree)}}(\degree)}{2K(\degree)A_{K(\degree)}(\degree)} 
\\ &\geq
\frac{C}{4}.
\end{align*}
Thus $$\frac{C}{4}\leq\degreeliminf \frac{\sum_{i=0}^{K(\degree)} A_i(\degree)}{\left(K(\degree)+1\right)A_{K(\degree)}(\degree)}
\leq \degreelimsup \frac{\sum_{i=0}^{K(\degree)} A_i(\degree)}{\left(K(\degree)+1\right)A_{K(\degree)}(\degree)}
 \leq 1.$$
Therefore $\left(K(\degree)+1\right)A_{K(\degree)}(\degree)$ is a tight asymptotic bound for $\sum_{i=0}^{K(\degree)} A_i(\degree)$.
\end{proof}

\begin{lemma}
\label{appendix: poysinmac bound lemma}
With fixed dimension, variable degree, the function $\degree^{\dimension+1}$ is tight asymptotic bound for $\polysinmac{\dMac}$.
\end{lemma}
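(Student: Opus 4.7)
The plan is to apply Lemma~\ref{appendix: Sum Bound Lemma} to the defining sum for $\polysinmac{\dMac}$. First, I would re-index: under our assumption that all polynomials have degree $\degree$, the definition gives
\[
\polysinmac{\dMac} \;=\; n\sum_{i=\degree}^{\dMac}\monomialsleqdeg{i-\degree} \;=\; n\sum_{j=0}^{\dMac-\degree}\monomialsleqdeg{j},
\]
so with $A_j = \monomialsleqdeg{j} = \binom{n+j}{n}$ and $K(\degree) = \dMac-\degree = (n-1)(\degree-1)$ the sum fits the hypothesis of Lemma~\ref{appendix: Sum Bound Lemma}.

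Next I would verify the two hypotheses of that lemma. The sequence $A_j = \binom{n+j}{n}$ is clearly positive and nondecreasing in $j$. For the ratio condition, I would write
\[
\frac{A_K}{A_{\halfFloor{K}}} \;=\; \prod_{\ell=1}^{n}\frac{K+\ell}{\halfFloor{K}+\ell},
\]
and observe that each factor is at most $2$ for $K\ge 2$ (since $K\le 2\halfFloor{K}+1 \le 2(\halfFloor{K}+\ell)$ whenever $\ell\ge 1$). This gives $A_{\halfFloor{K}} \ge 2^{-n} A_K$ uniformly in $K$, so the hypothesis holds with $C = 2^{-n}$ for all sufficiently large $\degree$ (and small $\degree$ contribute only a bounded correction, so they do not affect the asymptotic bound).

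Applying Lemma~\ref{appendix: Sum Bound Lemma} then yields that $\polysinmac{\dMac}$ has a tight asymptotic bound of $(K(\degree)+1) A_{K(\degree)}$. Since $K(\degree)+1 = (n-1)(\degree-1)+1 = \Theta(\degree)$ and $A_{K(\degree)} = \monomialsleqdeg{(n-1)(\degree-1)} = \binom{(n-1)\degree+1}{n}$ is a polynomial in $\degree$ of degree $n$ with positive leading coefficient (so $\Theta(\degree^n)$, by exactly the type of limit computation used in the proof of Lemma~\ref{appendix: monomialsleqdeg dMac fixed dim}), the product is $\Theta(\degree^{n+1})$, as required.

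There is no real obstacle here; the only thing to be careful about is the uniform ratio bound, which is why I would make the elementary product estimate explicit rather than appealing to the asymptotic $A_{K/2}/A_K \to 2^{-n}$. As an aside, one could bypass Lemma~\ref{appendix: Sum Bound Lemma} entirely by invoking the hockey-stick identity $\sum_{j=0}^{m}\binom{n+j}{n} = \binom{n+m+1}{n+1}$, which immediately gives the closed form $\polysinmac{\dMac} = n\binom{(n-1)\degree+2}{n+1}$, a polynomial of degree $n+1$ in $\degree$; but the Sum Bound Lemma approach keeps the proof parallel to the surrounding lemmas in the appendix.
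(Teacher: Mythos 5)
Your proof is correct and follows essentially the same route as the paper's: re-index the sum, invoke Lemma~\ref{appendix: Sum Bound Lemma} with $A_j = \monomialsleqdeg{j}$ and $K=\dMac-\degree$, and verify the ratio hypothesis by bounding $\prod_{\ell=1}^{n}\frac{K+\ell}{\halfFloor{K}+\ell}$ (the paper gets $6^n$, your slightly cleaner estimate gives $2^n$; both are uniform constants for fixed $n$). The hockey-stick identity you mention is a genuine shortcut the paper does not take, giving the closed form $\polysinmac{\dMac} = n\binom{\dMac-\degree+n+1}{n+1}$ from which the degree-$(n+1)$ polynomial behavior in $\degree$ is immediate, though as you say the Sum Bound Lemma route keeps the appendix uniform.
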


\begin{proof} We have
$\polysinmac{\dMac} 
= \dimension \sum_{i=\degree}^\dMac \monomialsleqdeg{i-\degree} = \dimension \sum_{i=0}^{\dMac-\degree} \monomialsleqdeg{i}$.
A computation similar to Lemma~\ref{appendix: monomialsleqdeg dMac fixed dim}
gives $(\dMac - \degree+1)\monomialsleqdeg{\dMac-\degree} \sim C\degree^{\dimension+1}$, so it suffices to show that $(\dMac - \degree+1)\monomialsleqdeg{\dMac-\degree}$ is a tight asymptotic bound for $\polysinmac{\dMac}$. By Lemma~\ref{appendix: Sum Bound Lemma}, this is true if $\monomialsleqdeg{\floor{\frac{\dMac-\degree}{2}}}\geq C\monomialsleqdeg{\dMac-\degree}$ for some $C>0$. We see that
\begin{align*}
\frac{V_{\dMac - \degree}}{V_{\halfFloor{\dMac - \degree}}} 
&=
\prod_{j=1}^{\dimension}\frac{\dimension\degree-\degree-\dimension+1+j}{\floor{\frac{\dimension\degree-\degree-\dimension+1}{2}} + j}
\\&\leq 
\prod_{j=1}^{\dimension}\frac{\dimension\degree-\degree+1}{\frac{1}{2}\left(\dimension\degree-\degree-\dimension+1\right)}
\\&\leq 6^\dimension.
\end{align*}
This last inequality follows because
$
\frac{\dimension\degree-\degree+1}{\dimension\degree-\degree-\dimension+1}
= 1 + 
\frac{\dimension}{(\dimension-1)(\degree-1)}
\leq 
3.
$
Therefore, $\degree^{\dimension+1}$ is a tight asymptotic bound.
\end{proof}

\begin{lemma}
\label{appdendix: Nullity ratio bound}
Let $\nullitymacappendix{k} = \nullitymac{k}$.
Then $\frac{\nullitymacappendix{k}}{\numroots} \geq \frac{1}{2n!}$ for any $k \geq \degree$.
\end{lemma}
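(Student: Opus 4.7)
The plan is to prove the slightly stronger claim that $\nullitymacappendix{k}/\numroots \geq 1/n!$ for all $k \geq \degree - 1$; this immediately implies the stated bound on the smaller range $k \geq \degree$, with an extra factor of $2$ to spare.

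The first step is to reduce to a single index. The identification $\nullitymacappendix{k} = \dim(\quotientalgebra_{\le k})$ established inside the proof of Proposition~\ref{lem:koszul-nullity} shows that $\nullitymacappendix{k}$ is non-decreasing in $k$, because the subspaces $\quotientalgebra_{\le k}$ form an increasing filtration of $\quotientalgebra$. Hence it suffices to bound $\nullitymacappendix{\degree - 1}$ from below.

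To evaluate $\nullitymacappendix{\degree - 1}$, I invoke Proposition~\ref{lem:koszul-nullity} under the blanket assumption of this section that $\pd_i = \degree$ for all $i$. For every $j \geq 1$, the corresponding summand carries a factor $\binom{n + \degree - 1 - j\degree}{n}$ whose upper argument is at most $n - 1 < n$, so the convention $\binom{a}{b} = 0$ for $a < b$ kills that term. Only $j = 0$ survives, and
\[\nullitymacappendix{\degree - 1} = \binom{n + \degree - 1}{n} = \frac{\degree(\degree+1)\cdots(\degree+n-1)}{n!}.\]
Dividing by $\numroots = \degree^n$ and pulling one factor of $\degree$ out of each term in the numerator gives
\[\frac{\nullitymacappendix{\degree - 1}}{\numroots} = \frac{1}{n!}\prod_{i=0}^{n-1}\left(1 + \frac{i}{\degree}\right) \geq \frac{1}{n!} \geq \frac{1}{2n!},\]
because every factor in the product is at least $1$. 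Combined with monotonicity in the first paragraph, this finishes the argument.

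No serious obstacle is expected: everything rests on two facts already in hand, namely monotonicity of the Hilbert function of $\quotientalgebra$ and the vanishing of all higher Koszul contributions at degree $\degree - 1$. The remaining manipulation is purely mechanical, and the slack between $1/n!$ and $1/(2n!)$ suggests the constant in the statement is chosen generously to allow headroom in later calls to the lemma rather than to force any delicate estimate here.
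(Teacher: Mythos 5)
Your proof is correct and takes essentially the same route as the paper: reduce via monotonicity of $\nullitymacappendix{k} = \dim(\quotientalgebra_{\le k})$ to a single base-case evaluation of the formula from Proposition~\ref{lem:koszul-nullity}, then estimate the resulting ratio of binomial to $\degree^\dimension$. The only difference is that you evaluate at $k = \degree - 1$, where every $j \geq 1$ term in the inclusion-exclusion sum vanishes outright, rather than at $k = \degree$ as the paper does, where a residual $-\dimension$ term must be absorbed; this gives you the marginally sharper constant $1/\dimension!$ at no extra cost.
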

\begin{proof}
The nullity of the Macaulay matrix is nondecreasing as the degree of the matrix increases, so it suffices to prove this for $k=\degree$. Note that when all the $\degree_i$ are the same, the nullity formula is $\sum_{j=0}^{\dimension}(-1)^j\binom{\dimension}{j}\binom{\dimension+\dMac-j\degree}{\dimension}$.

Observe that 
$\nullitymacappendix{\degree} =
\sum_{j=0}^{\dimension}(-1)^j\binom{\dimension}{j}\binom{\dimension+\degree-j\degree}{\dimension} = 
\binom{\dimension+\degree}{\dimension} - \dimension \geq
\frac{1}{2}\binom{\dimension+\degree}{\dimension}
$ because $\frac{1}{2}\binom{\dimension+\degree}{\dimension} \ge \frac{1}{2}\binom{\dimension+2}{\dimension} = \frac{1}{4}(\dimension+1)(\dimension+2) > \dimension$.
So
$$\frac{\nullitymacappendix{\degree}}{\numroots} \ge
\frac{\binom{\dimension+\degree}{\dimension}}{2\numroots} =
\frac{1}{2n!}\frac{\prod_{j=1}^\dimension(\degree+j)}{\degree^\dimension} \ge
\frac{1}{2n!}
$$
\end{proof}

\begin{lemma}
\label{appdendix: monomialsdeg ratio bound}
For fixed $\dimension$, $\monomialsdeg{\halfFloor{\dMac}} > C\monomialsdeg{\dMac}$.
\end{lemma}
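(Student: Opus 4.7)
The plan is to use the closed form $\monomialsdeg{k} = \binom{\dimension-1+k}{\dimension-1}$ and show that the ratio $\monomialsdeg{\halfFloor{\dMac}}/\monomialsdeg{\dMac}$ is a polynomial expression in $\degree$ of degree zero whose limit as $\degree \to \infty$ is a positive constant depending only on $\dimension$. Since $\dimension$ is fixed and the ratio is continuous and positive on the discrete set $\degree \in \{2,3,\ldots\}$, the infimum of these ratios is strictly positive and can serve as the constant $C$.

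First I would write out the ratio explicitly as
\[
\frac{\monomialsdeg{\halfFloor{\dMac}}}{\monomialsdeg{\dMac}}
= \frac{\binom{\dimension-1+\halfFloor{\dMac}}{\dimension-1}}{\binom{\dimension-1+\dMac}{\dimension-1}}
= \prod_{j=1}^{\dimension-1} \frac{\halfFloor{\dMac} + j}{\dMac + j}.
\]
Since $\halfFloor{\dMac} \ge (\dMac - 1)/2$, each factor in the product is bounded below by $\frac{(\dMac - 1)/2 + j}{\dMac + j}$, and as $\degree \to \infty$ (recall $\dMac = \dimension\degree - \dimension + 1$), each such factor tends to $1/2$. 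Hence
\[
\degreelim \frac{\monomialsdeg{\halfFloor{\dMac}}}{\monomialsdeg{\dMac}} = \frac{1}{2^{\dimension-1}}.
\]

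In particular, for all $\degree$ sufficiently large (depending only on $\dimension$), the ratio exceeds, say, $1/2^{\dimension}$. For the finitely many remaining values of $\degree \ge 2$ not covered by this tail bound, the ratio $\monomialsdeg{\halfFloor{\dMac}}/\monomialsdeg{\dMac}$ is a strictly positive rational number, and the minimum over this finite set together with $1/2^{\dimension}$ provides a uniform constant $C > 0$ such that $\monomialsdeg{\halfFloor{\dMac}} > C\monomialsdeg{\dMac}$ for all admissible $\degree$.

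There is no real obstacle here beyond bookkeeping; the only minor subtlety is handling the floor function, which is dispatched by the lower bound $\halfFloor{\dMac} \ge (\dMac-1)/2$, and verifying that $\dMac \ge 1$ so that all binomial expressions are well-defined, which follows from $\degree \ge 2$ and $\dimension \ge 2$ giving $\dMac = \dimension(\degree-1)+1 \ge \dimension+1 \ge 3$.
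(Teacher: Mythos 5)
Your proof is correct, but it takes a genuinely different route from the paper's. You write the ratio as a product over the \emph{dimension} index,
\[
\frac{\monomialsdeg{\halfFloor{\dMac}}}{\monomialsdeg{\dMac}}
= \prod_{j=1}^{\dimension-1} \frac{\halfFloor{\dMac} + j}{\dMac + j},
\]
which has a fixed number of factors ($\dimension-1$) when $\dimension$ is fixed, so that taking the limit $\degree \to \infty$ is immediate: each factor tends to $1/2$, the limit is $1/2^{\dimension-1}$, and a tail bound plus the finitely many small values of $\degree$ yields a positive constant $C$. The paper instead telescopes the factorials the other way, obtaining $\prod_{k=\halfFloor{\dMac}+1}^{\dMac}\frac{k}{\dimension+k-1}$, a product over $\halfCeil{\dMac}$ factors whose count grows with $\degree$; it then bounds each factor uniformly from below and massages the resulting power into the explicit constant $1/27^{\dimension}$, valid for all $\degree \ge 2$ without any asymptotic appeal. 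Your approach is more conceptual and gives the sharp asymptotic constant $1/2^{\dimension-1}$; the paper's gives a fully explicit, if looser, uniform bound without invoking a limiting argument. Both establish the lemma as stated.
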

\begin{proof}
Observe that
\begin{align*}
\frac{\monomialsdeg{\halfFloor{\dMac}}}{\monomialsdeg{\dMac}} 
&= 
\frac{\binom{\dimension+\halfFloor{\dMac}-1}{\dimension-1}}{\binom{\dimension+\dMac-1}{\dimension-1}} 
=
\frac{(\dimension+\halfFloor{\dMac}-1)!}{(\dimension+\dMac-1)!}\frac{\dMac!}{\halfFloor{\dMac}!} 
=
\prod_{k=\halfFloor{\dMac}+1}^{\dMac}\frac{k}{\dimension+k-1} 
\ge
(\frac{\halfFloor{\dMac}+1}{\dimension+\halfFloor{\dMac}})^{\halfCeil{\dMac}}
\\&=
(1-\frac{\dimension-1}{\dimension + \halfFloor{\dMac}})^{\halfCeil{\dMac}} 
\ge
(1-\frac{\dimension}{\halfFloor{\dimension\degree+\dimension+1}})^{\dimension\degree} 
\ge
(1-\frac{2}{\degree+1})^{\dimension\degree+\dimension} 
\ge
(1-\frac{2}{3})^{3\dimension} 
\\&=\frac{1}{27^{\dimension}}
\end{align*}
because $(1-\frac{2}{x})^x$ is an increasing function when $ x \ge 2$ and $\degree \ge 2$. 
\end{proof}

\begin{lemma}
\label{appdendix: monomialsleqdeg ratio bound}
For fixed $\dimension$, $\monomialsleqdeg{\halfFloor{\dMac}-1} > C\monomialsleqdeg{\dMac-1}$.
\end{lemma}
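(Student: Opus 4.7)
The plan is to mirror the strategy of the preceding lemma (on $\monomialsdeg{\halfFloor{\dMac}}/\monomialsdeg{\dMac}$). I would first expand the ratio in terms of binomial coefficients and simplify to a telescoping-style product. Using $\monomialsleqdeg{k-1}=\binom{\dimension+k-1}{\dimension}$, one obtains
\[
\frac{\monomialsleqdeg{\halfFloor{\dMac}-1}}{\monomialsleqdeg{\dMac-1}}
=\frac{\binom{\dimension+\halfFloor{\dMac}-1}{\dimension}}{\binom{\dimension+\dMac-1}{\dimension}}
=\prod_{k=\halfFloor{\dMac}}^{\dMac-1}\frac{k}{\dimension+k},
\]
a product with exactly $\halfCeil{\dMac}$ factors.

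Next I would exploit the monotonicity of $k\mapsto k/(\dimension+k)$ to bound the product below by its smallest factor raised to the total number of factors:
\[
\prod_{k=\halfFloor{\dMac}}^{\dMac-1}\frac{k}{\dimension+k}
\;\geq\;\left(\frac{\halfFloor{\dMac}}{\dimension+\halfFloor{\dMac}}\right)^{\!\halfCeil{\dMac}}
=\left(1-\frac{\dimension}{\dimension+\halfFloor{\dMac}}\right)^{\!\halfCeil{\dMac}}.
\]
From here I would use the elementary bounds (valid for $\dimension,\degree\geq 2$)
\[
\halfFloor{\dMac}\;\geq\;\tfrac{\dMac-1}{2}=\tfrac{\dimension(\degree-1)}{2},\qquad \halfCeil{\dMac}\leq\tfrac{\dimension\degree}{2},
\]
which give $\dimension/(\dimension+\halfFloor{\dMac})\leq 2/(\degree+1)$, hence
\[
\frac{\monomialsleqdeg{\halfFloor{\dMac}-1}}{\monomialsleqdeg{\dMac-1}}
\;\geq\;\left(1-\frac{2}{\degree+1}\right)^{\!\dimension\degree/2}.
\]

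Finally, I would invoke (as in the preceding lemma) the fact that $(1-2/x)^{x}$ is increasing in $x$ for $x\geq 2$. Since the base lies in $(0,1)$, replacing the exponent $\dimension\degree/2$ by the larger $\dimension(\degree+1)/2$ only decreases the quantity, so
\[
\left(1-\frac{2}{\degree+1}\right)^{\!\dimension\degree/2}
\;\geq\;\left[\left(1-\frac{2}{\degree+1}\right)^{\!\degree+1}\right]^{\!\dimension/2}
\;\geq\;\left(\tfrac{1}{3}\right)^{3\dimension/2},
\]
a positive constant depending only on $\dimension$, giving the claim. The only real place to be careful is the bookkeeping on the floors and ceilings and on the number of factors in the product; once that is correct the rest is the same routine estimate used just above for $\monomialsdeg{\halfFloor{\dMac}}/\monomialsdeg{\dMac}$.
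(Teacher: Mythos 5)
Your argument is correct and follows essentially the same route as the paper: expand the ratio as $\prod_{k=\halfFloor{\dMac}}^{\dMac-1}\frac{k}{\dimension+k}$, bound it below by the smallest factor raised to $\halfCeil{\dMac}$, reduce to the form $(1-2/(\degree+1))^{\text{exponent}}$, and finish with the monotonicity of $(1-2/x)^x$. The only difference is your slightly tighter bookkeeping ($\halfCeil{\dMac}\leq\dimension\degree/2$ rather than the paper's looser $\dimension(\degree+1)$), which yields the better constant $27^{-\dimension/2}$ instead of $27^{-\dimension}$, but this is immaterial for the asymptotic claim.
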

\begin{proof}
Similar to the previous lemma, we see that
$$
\frac{\monomialsleqdeg{\halfFloor{\dMac}-1}}{\monomialsleqdeg{\dMac-1}} = 
\frac{\binom{\dimension+\halfFloor{\dMac}-1}{\dimension}}{\binom{\dimension+\dMac-1}{\dimension}} =
\prod_{k=\halfFloor{\dMac}}^{\dMac-1}\frac{k}{\dimension+k} \ge
(\frac{\halfFloor{\dMac}}{\dimension+\halfFloor{\dMac}})^{\halfCeil{\dMac}} \ge
(1-\frac{2}{\degree+1})^{\dimension \degree+\dimension} \ge
\frac{1}{27^{\dimension}}.
$$
\end{proof}

\begin{lemma}
\label{appdendix: polysdeg ratio bound}
For fixed $\dimension$, $\polysdeg{\halfFloor{\dMac+\degree}} > C \polysdeg{\dMac}$.
\end{lemma}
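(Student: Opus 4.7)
The plan is to reduce this claim to one of the same form as Lemma~\ref{appdendix: monomialsleqdeg ratio bound} and then repeat the same type of estimate. First I would unpack the definitions. By definition $\polysdeg{k} = \dimension\, \monomialsleqdeg{k-\degree}$ whenever $k\ge\degree$, so the claim is equivalent to
\[
\monomialsleqdeg{\halfFloor{\dMac+\degree}-\degree} > C\,\monomialsleqdeg{\dMac-\degree}.
\]
A quick parity case analysis on $\dMac+\degree$ shows that $\halfFloor{\dMac+\degree}-\degree = \halfFloor{\dMac-\degree}$ in both the even and odd cases (in the even case each side equals $(\dMac-\degree)/2$; in the odd case each equals $(\dMac-\degree-1)/2$). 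So the claim reduces to showing $\monomialsleqdeg{\halfFloor{M}} > C\,\monomialsleqdeg{M}$ where $M = \dMac-\degree = (\dimension-1)\degree-\dimension+1$.

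Next I would copy the strategy of the two preceding lemmas almost verbatim. The ratio telescopes as
\[
\frac{\monomialsleqdeg{\halfFloor{M}}}{\monomialsleqdeg{M}} = \frac{\binom{\dimension+\halfFloor{M}}{\dimension}}{\binom{\dimension+M}{\dimension}} = \prod_{k=\halfFloor{M}+1}^{M}\frac{k}{\dimension+k}.
\]
The product has $\halfCeil{M}$ terms, each bounded below by the smallest one, which occurs at $k=\halfFloor{M}+1$. Since $\halfFloor{M}+1 \ge M/2$ and $M=(\dimension-1)\degree-\dimension+1$, a direct estimate gives $\halfFloor{M}+1 \ge (\dimension-1)(\degree-1)/2$, so $\dimension+\halfFloor{M}+1 \ge \dimension + (\dimension-1)(\degree-1)/2$. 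I would then bound
\[
\prod_{k=\halfFloor{M}+1}^{M}\frac{k}{\dimension+k} \ge \left(1-\frac{\dimension}{\dimension+\halfFloor{M}+1}\right)^{\halfCeil{M}} \ge \left(1-\frac{2}{\degree+1}\right)^{(\dimension-1)\degree+\dimension}
\]
after simplifying the inner fraction using that $(\dimension-1)\degree + \dimension + 1 \ge \degree + 1$ (crudely) times an appropriate constant tied to $\dimension$. Finally, just as in Lemmas~\ref{appdendix: monomialsleqdeg ratio bound} and~\ref{appdendix: monomialsdeg ratio bound}, I would invoke the monotonicity of $(1-2/x)^x$ for $x\ge 2$ and the hypothesis $\degree\ge 2$ to conclude that this quantity is bounded below by $1/27^{\dimension}$, or some similar absolute constant depending only on $\dimension$ (which is what the lemma asks for, since $\dimension$ is fixed).

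I do not expect any serious obstacle, because the lemma is structurally identical to the two immediately preceding ones; the only twist is the initial algebraic identity $\halfFloor{\dMac+\degree}-\degree = \halfFloor{\dMac-\degree}$, which has to be checked by parity. The main place to be careful is in chasing through the constants: $M=\dMac-\degree$ grows only like $(\dimension-1)\degree$ rather than $\dimension\degree$, so the intermediate fraction $\dimension/(\dimension+\halfFloor{M}+1)$ is a bit larger than in Lemma~\ref{appdendix: monomialsleqdeg ratio bound}, but by at most a bounded factor, so the final form $(1-2/(\degree+1))^{O(\degree)}\ge C^{-\dimension}$ still goes through once $\degree\ge 2$ and $\dimension\ge 2$ as assumed throughout this section.
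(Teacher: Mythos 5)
Your overall strategy (unpack the definition, note $\halfFloor{\dMac+\degree}-\degree = \halfFloor{\dMac-\degree}$, telescope the ratio, bound by the smallest factor, invoke monotonicity of a function of the form $(1-c/x)^x$) is exactly what the paper does. But the central displayed inequality is not justified by the argument you give, and the constant-chasing you flag as a minor concern is in fact the crux.

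Concretely, to deduce
\[
\left(1-\frac{\dimension}{\dimension+\halfFloor{M}+1}\right)^{\halfCeil{M}} \ge \left(1-\frac{2}{\degree+1}\right)^{(\dimension-1)\degree+\dimension}
\]
by comparing bases as you propose, you would need $\frac{\dimension}{\dimension+\halfFloor{M}+1} \le \frac{2}{\degree+1}$. With $M = \dMac - \degree = (\dimension-1)(\degree-1)$ this fails for every $\degree > 2$: e.g.\ $\dimension = 2$, $\degree = 5$ gives $2/5 > 2/6$. This is precisely because $M$ has a leading coefficient of $\dimension-1$ rather than $\dimension$, so the ``shift by an appropriate constant tied to $\dimension$'' you invoke would need $\dimension + \halfFloor{M}+1 \ge \frac{\dimension}{2}(\degree+1)$, which is false for $\degree > 2$. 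The inequality you display does in fact happen to be true (the larger exponent on the right more than compensates for the larger base), but not for the termwise reason you cite, and proving it requires a separate, genuinely different estimate — at which point you may as well just prove the lemma directly. The paper instead keeps the $\dimension$ in the numerator, writing the base as $1 - \frac{2\dimension}{\dimension\degree+\dimension-\degree+2}$, and uses monotonicity of $(1-\frac{2\dimension}{x})^x$ for $x \ge 2\dimension+1$ to obtain the explicit constant $\left(\frac{1}{2\dimension+1}\right)^{2\dimension+1}$. That modified choice of monotone function is the step that makes the argument close cleanly; without it, the reduction to the exact form of Lemma~\ref{appdendix: monomialsleqdeg ratio bound} does not go through.
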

\begin{proof}
Observe that
\begin{align*}
\frac{\polysdeg{\halfFloor{\dMac+\degree}}}{\polysdeg{\dMac}} 
&= 
\frac{\monomialsleqdeg{\halfFloor{\dMac-\degree}}}{\monomialsleqdeg{\dMac-\degree}} 
=
\prod_{k=\halfFloor{\dMac-\degree}+1}^{\dMac-\degree}\frac{k}{\dimension+k} 
= 
\prod_{k=\halfFloor{\dMac-\degree}+1}^{\dMac-\degree}(1-\frac{\dimension}{\dimension+k}) 
\\&\ge 
(1-\frac{\dimension}{\dimension+\halfFloor{\dMac-\degree}+1})^{\halfCeil{\dMac-\degree}} 
\ge
(1-\frac{2\dimension}{\dimension\degree+\dimension-\degree+2})^{\dimension\degree+\dimension-\degree+2}.
\end{align*}
For all $x \geq 2\dimension+1$, $(1-\frac{2\dimension}{x})^x \ge (1-\frac{2\dimension}{2\dimension+1})^{2\dimension+1}$ because $(1-\frac{2\dimension}{x})^x$ is an increasing function. Letting $x=\dimension\degree+\dimension-\degree+2$ gives a tight asymptotic bound of $$(1-\frac{2\dimension}{2\dimension+1})^{2\dimension+1} = \left(\frac{1}{2\dimension+1}\right)^{2\dimension+1}.$$
\end{proof}

\begin{lemma}
\label{appdendix: degree-by-degree final step fixed dim}
With fixed dimension, variable degree, a tight asymptotic bound of the degree-by-degree construction is $\dMac$ times the tight asymptotic bound of the final step.
\end{lemma}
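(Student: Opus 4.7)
The plan is to write the total cost of the degree-by-degree construction as
\[
\sum_{k=\degree}^{\dMac-1} C_k,
\]
where $C_k$ denotes the combined cost of the three sub-steps in the $k$-th iteration, and then to show this sum is simultaneously $O(\dMac \cdot C_{\dMac-1})$ and $\Omega(\dMac \cdot C_{\dMac-1})$. Since the iteration starts at $k=\degree$ (the smallest degree for which the Macaulay matrix is nonempty) and ends at $k=\dMac-1$, the number of iterations is $\dMac-\degree = (\dimension-1)(\degree-1)$, which is $\Theta(\dMac)$ for fixed $\dimension$.

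For the upper bound, I would observe that every factor appearing in the three sub-step cost expressions---namely $\nullitymac{k}$, $\nullitymac{k+1}$, $\monomialsleqdeg{k}$, $T_{k+1}$, and $\monomialsdeg{k+1}$---is a nondecreasing function of $k$. Therefore $C_k \leq C_{\dMac-1}$ for every $k$ in the summation range, and summing over the $\Theta(\dMac)$ iterations yields $\sum_k C_k = O(\dMac \cdot C_{\dMac-1})$.

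For the matching lower bound, I would apply Lemma~\ref{appendix: Sum Bound Lemma} to each of the three sub-step cost sequences separately. The hypothesis to verify is that the midpoint cost is at least a positive constant times the endpoint cost. Because each sub-step cost is a product of at most three of the factors listed above, it suffices to prove a uniform lower bound on the midpoint-to-endpoint ratio of each individual factor and then multiply the resulting constants. The earlier appendix lemmas supply exactly these ratio bounds: Lemma~\ref{appdendix: Nullity ratio bound} controls $\nullitymac{k}$ and $\nullitymac{k+1}$, since both are at least $\frac{1}{2n!}\numroots$ and hence a constant fraction of $\nullitymac{\dMac-1}$; Lemmas~\ref{appdendix: monomialsdeg ratio bound} and~\ref{appdendix: monomialsleqdeg ratio bound} control $\monomialsdeg{k+1}$ and $\monomialsleqdeg{k}$ respectively; and Lemma~\ref{appdendix: polysdeg ratio bound} controls $T_{k+1}=\polysdeg{k+1}$. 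Combining these yields $C_{\halfFloor{\dMac-1}} \geq C' \cdot C_{\dMac-1}$ for a positive constant $C'$ depending only on $\dimension$, which is precisely the hypothesis of Lemma~\ref{appendix: Sum Bound Lemma}.

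The main technical hurdle I anticipate is aligning the index shift in the existing ratio lemmas with the summation range $k=\degree,\ldots,\dMac-1$. The midpoint of this range is not exactly $\halfFloor{\dMac}$, but since $\degree$ is fixed relative to $\dMac$ as $\degree$ varies with fixed $\dimension$, this discrepancy only contributes a constant multiplicative factor that can be absorbed into $C'$, much as in the proof of Lemma~\ref{appdendix: polysdeg ratio bound}. Once the ratio condition is verified, Lemma~\ref{appendix: Sum Bound Lemma} immediately yields $\sum_k C_k = \Theta(\dMac \cdot C_{\dMac-1})$, which is the desired tight asymptotic bound.
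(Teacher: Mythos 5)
Your proposal is correct and follows essentially the same route as the paper's proof: both bound the total cost $\sum_k \DBDStep{k}$ by invoking Lemma~\ref{appendix: Sum Bound Lemma}, verifying its hypothesis via the midpoint-to-endpoint ratio bounds of Lemmas~\ref{appdendix: Nullity ratio bound}--\ref{appdendix: polysdeg ratio bound}, and absorbing the index-shift from the $\degree$-offset into the constant. The only cosmetic differences are that you split out the upper bound as a separate (trivial, by monotonicity) argument and propose applying the sum-bound lemma to each sub-step sequence separately rather than to their sum $\DBDStep{k}$ directly; both variants are valid and yield the same conclusion.
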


\begin{proof}
Define $\DBDStep{k}$ to be the complexity of the step that results in the null space of the Macaulay Matrix of degree $k$. Then $$\DBDStep{k} = 
\nullitymacappendix{k-1}\monomialsleqdeg{k-1}\polysdeg{k} + 
(\nullitymacappendix{k-1}+\monomialsdeg{k}) \polysdeg{k} \min(\nullitymacappendix{k-1}+\monomialsdeg{k}, \polysdeg{k}) + 
\nullitymacappendix{k} \nullitymacappendix{k-1} \monomialsleqdeg{k-1}.
$$

The cost of the entire construction is $\sum_{k=\degree+1}^{\dMac}\DBDStep{k} = 
\sum_{k=0}^{\dMac-\degree-1}\DBDStep{k+\degree+1}$. Using \ref{appendix: Sum Bound Lemma}, it suffices to show that $\DBDStep{\halfFloor{\dMac-\degree-1}+\degree+1} > C \DBDStep{\dMac}$ for some $C > 0$.
From Lemma~\ref{appdendix: Nullity ratio bound} and the fact that 
$$\halfFloor{\dMac-\degree-1}+\degree = \halfFloor{\dMac+\degree-1} = \halfFloor{\dimension\degree -\dimension +\degree} \ge \halfFloor{2\degree} = \degree,$$ 
we have that $\nullitymacappendix{\halfFloor{\dMac-\degree-1}+\degree} > C_1\numroots$ and
$\nullitymacappendix{\halfFloor{\dMac-\degree-1}+\degree+1} > C_2\numroots$ for $C_1$, $C_2 > 0.$ \\
From Lemma~\ref{appdendix: monomialsdeg ratio bound} and $\monomialsdeg{i}$ being increasing, we have that 
$$\monomialsdeg{\halfFloor{\dMac-\degree-1}+\degree} > \monomialsdeg{\halfFloor{\dMac}} \ge C_3 \monomialsdeg{\dMac}$$ for $C_3 > 0$.
From Lemma~\ref{appdendix: monomialsleqdeg ratio bound} and $\monomialsleqdeg{i}$ being increasing, we have that 
$$\monomialsleqdeg{\halfFloor{\dMac-\degree-1}+\degree} > \monomialsleqdeg{\halfFloor{\dMac}-1} \ge C_4 \monomialsleqdeg{\dMac}$$ for $C_4 > 0$.
From Lemma~\ref{appdendix: polysdeg ratio bound} and $\polysdeg{i}$ being increasing, we have that 
$$\polysdeg{\halfFloor{\dMac-\degree-1}+\degree+1} \ge \polysdeg{\halfFloor{\dMac+\degree}} \ge C_5 \polysdeg{\dMac}$$ for $C_5 > 0$.
Let $C = \min(C_1, C_2, C_3, C_4, C_5)^3$.
Then 
\begin{align*}
\DBDStep{\halfFloor{\dMac-\degree-1}+\degree+1} 
\ge
(C_1\nullitymacappendix{\dMac-1}
+C_3\monomialsdeg{\dMac}) C_5\polysdeg{\dMac} \min(C_1\nullitymacappendix{\dMac-1}+C_3\monomialsdeg{\dMac}, C_5\polysdeg{\dMac}) 
\\+ 
C_2\numroots C_1\nullitymacappendix{\dMac-1} \monomialsleqdeg{k-1} 
+C_1\nullitymacappendix{\dMac-1}C_4\monomialsleqdeg{\dMac-1}C_5\polysdeg{\dMac} 
\ge C\DBDStep{\dMac}.
\end{align*}
\end{proof}

\subsection{Fixed Degree, Varying Dimension}
\label{appendix: Fixed deg}
\par This section's lemmas are for the situation where the dimension $n$ goes to infinity and the degree $\beta$ is fixed.

\begin{lemma}
\label{appendix: monomialsleqdeg deg dMac const deg}
With fixed degree, variable dimension, the function $\frac{1}{\sqrt{n}} \degree ^ \dimension \alpha_{\degree} ^ \dimension$ is a tight asymptotic bound for $\monomialsleqdeg{\dMac},\monomialsleqdeg{\dMac-1}$ and $\monomialsdeg{\dMac}$.
\end{lemma}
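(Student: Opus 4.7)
My plan is to reduce the three quantities $V_{d-1}$, $V_d$, $H_d$ to explicit binomial coefficients in $n$ and $\beta$, then extract their leading asymptotics with Stirling's formula, and finally show that all three have the same tight asymptotic order by comparing them via bounded ratios.

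First, I would substitute $d = n\beta - n + 1$ into the definitions to obtain the clean forms
\[
V_{d-1} = \binom{n+d-1}{n} = \binom{n\beta}{n}, \quad V_d = \binom{n\beta+1}{n}, \quad H_d = \binom{n+d-1}{n-1} = \binom{n\beta}{n-1}.
\]
This is the routine part, just bookkeeping with $\binom{a}{b}=\binom{a}{a-b}$.

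Next I would focus on $\binom{n\beta}{n}$, which I expect to be the representative whose asymptotics drive everything. Applying Stirling's approximation $m! \sim \sqrt{2\pi m}\,(m/e)^m$ to each of the three factorials $(n\beta)!$, $n!$, $(n(\beta-1))!$ and simplifying the $e$-powers and the $n^{n\beta}$ factors, the exponential part collapses to
\[
\frac{\beta^{n\beta}}{(\beta-1)^{n(\beta-1)}} \;=\; \beta^n\!\left(\frac{\beta}{\beta-1}\right)^{n(\beta-1)} \;=\; \beta^n \alpha_\beta^n,
\]
while the square-root prefactors combine into $\frac{1}{\sqrt{n}}\sqrt{\beta/(2\pi(\beta-1))}$. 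This yields $\binom{n\beta}{n} \sim C_\beta\,\frac{1}{\sqrt{n}}\,\beta^n \alpha_\beta^n$ for the explicit positive constant $C_\beta = \sqrt{\beta/(2\pi(\beta-1))}$, which is exactly the tight asymptotic bound claimed for $V_{d-1}$.

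Finally I would compare the other two quantities to this benchmark using one-step binomial identities. Writing
\[
V_d = \binom{n\beta+1}{n} = \frac{n\beta+1}{n(\beta-1)+1}\binom{n\beta}{n}, \qquad H_d = \binom{n\beta}{n-1} = \frac{n}{n(\beta-1)+1}\binom{n\beta}{n},
\]
the prefactors converge as $n\to\infty$ to $\beta/(\beta-1)$ and $1/(\beta-1)$ respectively, both strictly positive finite constants. Hence $V_d$ and $H_d$ are each asymptotically a positive constant multiple of $V_{d-1}$ and therefore share its tight asymptotic bound $\frac{1}{\sqrt{n}}\beta^n\alpha_\beta^n$.

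The main obstacle is purely computational: keeping the Stirling bookkeeping honest so that the $n^{n\beta}$ factors cancel cleanly against the $n^n$ and $n^{n(\beta-1)}$ factors, and that the $\sqrt{n\beta}$, $\sqrt{n}$, $\sqrt{n(\beta-1)}$ prefactors combine to give exactly $n^{-1/2}$ rather than some other power. Once this bookkeeping is done, recognizing $\left(\beta/(\beta-1)\right)^{\beta-1} = \alpha_\beta$ and pulling out $\beta^n$ is immediate, and the comparison step is trivial ratio arithmetic.
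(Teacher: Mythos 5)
Your proposal is correct and follows essentially the same route as the paper: reduce $V_{d-1}$, $V_d$, $H_d$ to explicit binomials in $n\beta$ and $n$, apply Stirling to $V_{d-1}=\binom{n\beta}{n}$, and obtain the other two by comparing adjacent binomial ratios. One small remark: you arrive at the prefactor $\sqrt{\beta/(2\pi(\beta-1))}$, whereas the paper writes $\sqrt{2\pi\beta/(\beta-1)}$; your bookkeeping is the correct one (the paper appears to have dropped a factor of $2\pi$), but since both are positive finite constants the discrepancy has no bearing on the tight asymptotic bound being established.
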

\begin{proof}
It is straightforward to verify that $\monomialsleqdeg{\dMac-1}\sim \frac{\degree-1}{\degree}\monomialsleqdeg{\dMac}$ and 
$\monomialsdeg{\dMac}\sim\frac{1}{\degree}\monomialsleqdeg{\dMac}$ as $\dimension\rightarrow\infty$. Using Stirling's approximation, it can be shown that 
$$\monomialsleqdeg{\dMac-1} 
\sim \left(\sqrt{\frac{2\pi\degree}{\degree-1}}\right) \frac{1}{\sqrt{n}} \degree ^ \dimension \alpha_{\degree} ^ \dimension.$$ 
The result follows.
\end{proof}

\begin{lemma}
\label{appendix: polysinmac deg dMac const deg}
With fixed degree, variable dimension, the function $\sqrt{n} \degree ^ \dimension \alpha_{\degree} ^ \dimension$ is a tight asymptotic bound for $\polysdeg{\dMac}$ and $\polysinmac{\dMac}$.
\end{lemma}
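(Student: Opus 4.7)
The plan is to prove the two asymptotic bounds separately, first establishing the bound for $\polysdeg{\dMac}$ via Stirling's formula and then deducing the bound for $\polysinmac{\dMac}$ by showing that the sum defining it is geometrically dominated by its final term.

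For $\polysdeg{\dMac} = \dimension\, \monomialsleqdeg{\dMac - \degree}$, the first step is the algebraic identity $\dMac - \degree = \dimension\degree - \dimension + 1 - \degree = (\dimension - 1)(\degree - 1)$, which gives $\polysdeg{\dMac} = \dimension \binom{\dimension\degree - \degree + 1}{\dimension}$. I would apply Stirling's formula to this binomial $\binom{N}{\dimension}$ with $N = \dimension\degree - \degree + 1$ and $N - \dimension = (\dimension - 1)(\degree - 1)$. The dominant factor $N^N/(\dimension^\dimension (N-\dimension)^{N-\dimension})$ simplifies, upon collecting powers of $\dimension$, to $\degree^{\dimension\degree}/(\degree-1)^{\dimension(\degree-1)} \cdot (1 + o(1))$, which is precisely $\degree^\dimension\, \alpha_\degree^\dimension \cdot (1 + o(1))$ since $\alpha_\degree = (\degree/(\degree-1))^{\degree-1}$. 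Stirling's square-root correction contributes a factor of order $\dimension^{-1/2}$, so multiplying by the outer $\dimension$ produces $\polysdeg{\dMac} \sim C\sqrt{\dimension}\,\degree^\dimension \alpha_\degree^\dimension$ for an explicit constant $C > 0$ depending only on $\degree$, which is the claimed tight asymptotic bound.

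For $\polysinmac{\dMac} = \dimension \sum_{j=0}^{(\dimension - 1)(\degree - 1)} \monomialsleqdeg{j}$, the strategy is to bound the sum by a geometric series anchored at its final term. The ratio $\monomialsleqdeg{j-1}/\monomialsleqdeg{j} = j/(\dimension + j)$ is increasing in $j$, so on the summation range it is maximized at $j_0 = (\dimension - 1)(\degree - 1)$, giving $\rho := j_0/(\dimension + j_0) = (\dimension - 1)(\degree - 1)/(\dimension\degree - \degree + 1)$. Iterating the ratio yields $\monomialsleqdeg{j_0 - k} \le \rho^k \monomialsleqdeg{j_0}$ for every $k \ge 0$, and summing the resulting geometric series gives $\sum_{j=0}^{j_0} \monomialsleqdeg{j} \le \monomialsleqdeg{j_0}/(1 - \rho) = \monomialsleqdeg{j_0} \cdot (\dimension\degree - \degree + 1)/\dimension$. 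Multiplying by $\dimension$ shows $\polysinmac{\dMac} \le (\degree + o(1))\polysdeg{\dMac}$, while the trivial bound $\polysinmac{\dMac} \ge \polysdeg{\dMac}$ holds since the final term of the sum already equals $\monomialsleqdeg{j_0}$. Thus $\polysinmac{\dMac} = \Theta(\polysdeg{\dMac})$, and the tight bound from the previous step transfers verbatim.

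The main obstacle I anticipate is tracking the Stirling constants carefully enough to recognize the $(\degree/(\degree-1))^{(\degree-1)\dimension} = \alpha_\degree^\dimension$ factor, since both $N$ and $N - \dimension$ grow linearly in $\dimension$; the geometric-series argument for the sum is routine once $\rho$ is identified and seen to be bounded away from $1$ by a quantity of order $1/\dimension$.
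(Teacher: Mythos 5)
Your proof is correct and follows the same strategy as the paper: establish the Stirling-type estimate for $\polysdeg{\dMac}$, then dominate $\polysinmac{\dMac}$ by a geometric series anchored at its final term $\polysdeg{\dMac}$, using a ratio bounded away from $1$ (your $\rho$ is the reciprocal of the paper's $\theta = 1 + \frac{1}{\degree-1}$ up to a vanishing correction). The only minor difference is in the first half: the paper gets the $\polysdeg{\dMac}$ bound by computing $\lim_{\dimension\to\infty}\monomialsleqdeg{\dMac-\degree}/\monomialsleqdeg{\dMac-1} = (1-\tfrac{1}{\degree})^\degree$ and appealing to the previous lemma's asymptotics for $\monomialsleqdeg{\dMac-1}$, whereas you re-apply Stirling's formula directly to $\binom{\dimension\degree-\degree+1}{\dimension}$ --- both are sound.
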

\begin{proof}
Since $\polysdeg{\dMac} = \dimension\ \monomialsleqdeg{\dMac - \degree}$, the function $\sqrt{n} \degree ^ \dimension \alpha_{\degree} ^ \dimension$ is a tight asymptotic bound for $\polysdeg{\dMac}$ if  $\monomialsleqdeg{\dMac - \degree}\sim C\monomialsleqdeg{\dMac-1}$ as $\dimension\rightarrow\infty$ for some $C>0$. It is straightforward to verify that
$$\dimensionlim \frac{\monomialsleqdeg{\dMac - \degree}}{\monomialsleqdeg{\dMac-1}} 
= \left(1-\frac{1}{\degree}\right)^\degree$$
and so the result holds for $\polysdeg{\dMac}$.

To prove the bound for $\polysinmac{\dMac}$, it suffices to show that $\polysdeg{\dMac}$ is a tight asymptotic bound for $\polysinmac{\dMac}$. Clearly $\polysinmac{\dMac} \geq \polysdeg{\dMac}$ because $\polysinmac{\dMac} = \sum_{k=\degree}^{\dMac}\polysdeg{k}$. If there is some $\theta > 1$ such that $\degree \leq k \leq \dMac$ implies $\polysdeg{k} \geq \theta \polysdeg{k-1}$, then 
$$\polysinmac{\dMac} = \sum_{k=\degree}^{\dMac}\polysdeg{k} \leq \sum_{k=0}^{\dMac-\degree} \frac{\polysdeg{\dMac}}{\theta^k} < \polysdeg{\dMac} \sum_{k=0}^{\infty}\frac{1}{\theta^k} = \polysdeg{\dMac} \frac{\theta}{\theta-1}$$
and we are done.

Now, let $\theta = 1 + \frac{1}{\degree-1}$. We see that 
\begin{align*}
\frac{\polysdeg{k}}{\polysdeg{k-1}} &= \frac{\binom{\dimension+k-\degree}{\dimension}}{\binom{\dimension+k-\degree-1}{\dimension}} \\ 
&=
1 + \frac{\dimension}{k-\degree} \\ 
&\geq
1 + \frac{\dimension}{\dMac-\degree} \\ 
&= 
1 + \frac{\dimension}{(\dimension-1)(\degree-1)} \\ 
&\geq
\theta.
\end{align*}
Multiplying both sides by $T_{k - 1}$ yields the desired result.

\end{proof}

\begin{lemma}
\label{appendix: degree-by-degree final step fixed deg}
With fixed degree, variable dimension, a tight asymptotic bound of the degree-by-degree construction is the same as the tight asymptotic bound of the final step.
\end{lemma}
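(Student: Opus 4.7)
The plan is to mirror the structure of the fixed-dimension argument in Lemma~\ref{appdendix: degree-by-degree final step fixed dim} but to replace its ``middle-term dominates'' summation with a geometric-series bound. Writing the total cost as $\sum_{k=\degree+1}^{\dMac} \DBDStep{k}$, the lower bound $\sum_k \DBDStep{k} \geq \DBDStep{\dMac}$ is immediate since every summand is nonnegative. For the matching upper bound the goal is to produce a constant $\rho \in (0,1)$, depending only on $\degree$, such that $\DBDStep{k-1}/\DBDStep{k} \leq \rho$ uniformly in $\dimension$ and in $k \in \{\degree+2,\dots,\dMac\}$; a geometric series then gives $\sum_k \DBDStep{k} \leq \DBDStep{\dMac}/(1-\rho)$, which matches the final-step bound.

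To establish the ratio bound I would split $\DBDStep{k} = A_k + B_k + C_k$ into its three summands $A_k = \nullitymacappendix{k-1}\monomialsleqdeg{k-1}\polysdeg{k}$, $B_k = (\nullitymacappendix{k-1}+\monomialsdeg{k})\polysdeg{k}\min(\nullitymacappendix{k-1}+\monomialsdeg{k},\polysdeg{k})$, and $C_k = \nullitymacappendix{k}\nullitymacappendix{k-1}\monomialsleqdeg{k-1}$, and bound each of $A_{k-1}/A_k$, $B_{k-1}/B_k$, $C_{k-1}/C_k$ separately. The essential ingredient is the identity $\monomialsleqdeg{i-1}/\monomialsleqdeg{i} = i/(\dimension+i)$, which is strictly increasing in $i$ and is therefore at most $(\dMac-1)/(\dimension+\dMac-1) = 1 - 1/\degree$ for all $i \leq \dMac$. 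Completely analogous closed forms apply to $\monomialsdeg{i-1}/\monomialsdeg{i}$ and to $\polysdeg{i-1}/\polysdeg{i} = (i-\degree)/(\dimension+i-\degree)$, the latter bounded above by $(\dMac-\degree)/(\dimension+\dMac-\degree)$, which tends to $(\degree-1)/\degree<1$ as $\dimension\to\infty$ and is bounded away from $1$ uniformly in $\dimension$. The nullities $\nullitymacappendix{k}$ are nondecreasing in $k$ (cf.\ Proposition~\ref{lem:koszul-nullity}), so $\nullitymacappendix{k-2}/\nullitymacappendix{k-1} \leq 1$; combined with a single strict $V$- or $T$-ratio in each summand, this yields the required uniform $\rho<1$.

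The one subtle point is the $\min$ inside $B_k$, since the active branch may differ at $k$ and $k-1$. I would sidestep this by working with the always-valid upper bound $\tilde B_k := (\nullitymacappendix{k-1}+\monomialsdeg{k})^2\polysdeg{k}$; the geometric-ratio analysis applies cleanly to $\tilde B_k$ using the $V$- and $T$-ratio bounds together with the monotonicity of $\nullitymacappendix{k-1}+\monomialsdeg{k}$ in $k$. To convert the resulting bound $\sum_k B_k \leq \sum_k \tilde B_k \leq \tilde B_\dMac/(1-\rho)$ back to $B_\dMac$ itself, one observes that Table~\ref{table: complexity bounds} gives $\nullitymacappendix{\dMac} = \degree^\dimension$, $\monomialsdeg{\dMac}\sim \dimension^{-1/2}\degree^\dimension\alpha_\degree^\dimension$, and $\polysdeg{\dMac} \sim \sqrt{\dimension}\,\degree^\dimension\alpha_\degree^\dimension$; since $\alpha_\degree>1$, these imply $\nullitymacappendix{\dMac-1}+\monomialsdeg{\dMac} = o(\polysdeg{\dMac})$, so for all sufficiently large $\dimension$ the $\min$ in $B_\dMac$ is attained at $\nullitymacappendix{\dMac-1}+\monomialsdeg{\dMac}$ and $B_\dMac = \tilde B_\dMac$. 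The main obstacle is keeping track of this regime comparison; the algebraic ratio estimates themselves are routine repetitions of the manipulations already carried out in Lemmas~\ref{appdendix: monomialsdeg ratio bound}--\ref{appdendix: polysdeg ratio bound}.
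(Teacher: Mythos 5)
Your proof is correct and follows essentially the same route as the paper: bound the total from below by the final term and from above by a geometric series obtained from uniform (in $\dimension$) ratio bounds on $\monomialsleqdeg{k}$ and $\polysdeg{k}$ together with monotonicity of the remaining factors. The only place you diverge is the treatment of the $\min$ in the second summand, where your auxiliary $\tilde B_k$ and the regime comparison at $k=\dMac$ are more elaborate than needed: the paper simply observes that the factor $\polysdeg{k}$ sitting \emph{outside} the $\min$ already supplies the geometric decay, and that the $\min$ of two nondecreasing quantities is itself nondecreasing, so every remaining factor in $\DBDStep{k}$ grows with $k$ and no branch analysis is required.
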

\begin{proof}
Let the complexity of each step of the construction be $\DBDStep{k}$. The full complexity is $\sum_{k=\degree}^{\dMac}\DBDStep{k}$. This is clearly bounded below by $\DBDStep{\dMac}$, so it suffices to show $\sum_{k=\degree}^{\dMac}\DBDStep{k} < C\DBDStep{\dMac}$ for some $C$. Following the same reasoning as Lemma~\ref{appendix: polysinmac deg dMac const deg}, it suffices to show that $\DBDStep{k} > \theta \DBDStep{k-1}$ for some $\theta > 1$. From lemma~\ref{appendix: polysinmac deg dMac const deg} we have that $\polysdeg{k} > \theta_1 \polysdeg{k-1}$ for $\theta_1 > 1$. And $\frac{\monomialsleqdeg{k+1}}{\monomialsleqdeg{k}} = \frac{\binom{\dimension+k+1}{\dimension}}{\binom{\dimension+k}{\dimension}} = \frac{\dimension + k + 1}{k+1} = 1 + \frac{\dimension}{k+1}$. 

Using $k < \dMac$, this is 
$$1 + \frac{\dimension}{k+1} \geq 1+\frac{\dimension}{\dimension\degree-\dimension+2} > 1+\frac{\dimension}{\dimension\degree-\dimension} = 1 + \frac{1}{\degree - 1}.$$

So letting $\theta_2 = 1 + \frac{1}{\degree - 1}, \monomialsleqdeg{k} > \theta_2 \monomialsleqdeg{k-1}.$
Each of $\monomialsleqdeg{k}$, $\monomialsdeg{k}$, $\polysdeg{k}$, $\polysinmac{k}$ is increasing, and steps 1 and 2 of the construction contain a $\polysdeg{k}$ and step 3 contains a $\monomialsleqdeg{k}$, so letting $\theta = \min(\theta_1, \theta_2)$, we get that $\DBDStep{k} > \theta \DBDStep{k-1}$.
\end{proof}

\clearpage
\bibliographystyle{alpha}
\bibliography{Jarvis}

\end{document}